\newcommand{\IR}{{\mathbb R}}
\newcommand{\R}{{\mathbb R}}
\newcommand{\N}{{\mathbb N}}
\newcommand{\Ncal}{{\mathcal N}}
\newcommand{\Hcal}{{\mathcal H}}
\def\a{\alpha}
\def\b{\beta}
\newcommand {\con}{\subset}
\newcommand{\eps}{\varepsilon}
\begin{document}

\title[Existence and symmetry of least energy nodal solutions]
{Existence and symmetry of least energy nodal solutions for Hamiltonian elliptic systems}

\author[D. Bonheure]{Denis Bonheure}
\address{Denis Bonheure \newline \indent D{\'e}partement de Math{\'e}matique \newline \indent  Universit{\'e} libre de Bruxelles \newline \indent CP 214,  Boulevard du Triomphe, B-1050 Bruxelles, Belgium}
\email{denis.bonheure@ulb.ac.be}

\author[E. Moreira dos Santos]{Ederson Moreira dos Santos}
\address{Ederson Moreira dos Santos \newline \indent Instituto de Ci{\^e}ncias Matem{\'a}ticas e de Computa\c{c}{\~a}o - ICMC \newline \indent Universidade de S{\~a}o Paulo - USP \newline \indent
Caixa Postal 668, CEP 13560-970 - S\~ao Carlos - SP - Brazil}
\email{ederson@icmc.usp.br}
\author[M. Ramos]{Miguel Ramos}
\address{Miguel Ramos passed away in January 2013}

\author[H. Tavares]{Hugo Tavares}\thanks{}
\address{Hugo Tavares \newline \indent Center for Mathematical Analysis, Geometry and Dynamical Systems,
 \newline \indent Mathematics Department, \newline \indent Instituto Superior T\'ecnico, Universidade de Lisboa   \newline \indent
Av. Rovisco Pais, 1049-001 Lisboa, Portugal}
\email{htavares@math.ist.utl.pt}

\date{\today}

\subjclass[2010]{35B06, 35B07, 35J15, 35J40, 35J47}

\keywords{Hamiltonian elliptic systems, H\'enon weights, Least energy nodal solutions, Foliated Schwarz symmetry, Symmetry-breaking.}

\begin{abstract}
In this paper we prove existence of least energy nodal solutions for the Hamiltonian elliptic system with H\'enon--type weights 
\[
-\Delta u = |x|^{\beta} |v|^{q-1}v, \quad -\Delta v =|x|^{\alpha}|u|^{p-1}u\quad \mbox{ in } \Omega, \qquad u=v=0 \mbox{ on } \partial \Omega,
\]
where $\Omega$ is a bounded smooth  domain in $\R^N$, $N\geq 1$, $\alpha, \beta \geq 0$ and the nonlinearities are superlinear and subcritical, namely 
\[
1> \frac{1}{p+1}+\frac{1}{q+1}> \frac{N-2}{N}.
\]
When $\Omega$ is either a ball or an annulus centered at the origin and $N \geq 2$, we show that these solutions display the so-called foliated Schwarz symmetry. It is natural to conjecture that these solutions are not radially symmetric. We provide such a symmetry breaking in a range of parameters where the solutions of the system behave like the solutions of a single equation. Our results on the above system are new even in the case of the Lane-Emden system (i.e.\ without weights). As far as we know, this is the first paper that contains results about least energy nodal solutions for strongly coupled elliptic systems and their symmetry properties.

\end{abstract}
\maketitle
\numberwithin{equation}{section}
\newtheorem{theorem}{Theorem}[section]
\newtheorem{lemma}[theorem]{Lemma}
\newtheorem{example}[theorem]{Example}
\newtheorem{remark}[theorem]{Remark}
\newtheorem{proposition}[theorem]{Proposition}
\newtheorem{definition}[theorem]{Definition}
\newtheorem{corollary}[theorem]{Corollary}
\newtheorem*{open}{Open problem}

\section{Introduction}

We consider the Hamiltonian elliptic system with H\'enon-type weights
\begin{equation}\label{eq:mainsystem}
-\Delta u = |x|^{\beta} |v|^{q-1}v, \qquad -\Delta v =|x|^{\alpha}|u|^{p-1}u\quad \mbox{ in } \Omega, \qquad u=v=0 \mbox{ on } \partial \Omega,
\end{equation}
where $\Omega$ is a bounded domain in $\IR^N$, $N\geqslant 1$, and $\alpha, \beta\geqslant  0$. We consider superlinear and subcritical nonlinearities, namely
\begin{equation}\label{eq:subcritical}\tag{H}
1> \frac{1}{p+1}+\frac{1}{q+1}>\frac{N-2}{N}.
\end{equation}
Observe that the first condition is also equivalent to $pq>1$. 

The system \eqref{eq:mainsystem} is strongly coupled in the sense that $u\equiv 0$ if and only if $v\equiv 0$. Moreover, $u$ changes sign if and only if $v$ changes sign.

We recall that a strong solution to this problem corresponds to a pair $(u,v)$ with 
\[
u\in W^{2,(q+1)/q}(\Omega)\cap W^{1,(q+1)/q}_0(\Omega),\qquad v\in W^{2,(p+1)/p}(\Omega)\cap W^{1,(p+1)/p}_0(\Omega)
\]
satisfying the system in \eqref{eq:mainsystem} for a.e. $x\in \Omega$. By using a bootstrap method (see \cite[Theorem 1(a)]{Sirakov}), it can be shown that strong solutions are actually classical solutions.

Consider the energy functional
\begin{equation}\label{energy E}
E(u,v)= \int_\Omega \nabla u\cdot \nabla v\, dx-\frac{1}{p+1}\int_\Omega |x|^{\alpha}|u|^{p+1}\, dx-\frac{1}{q+1}\int_\Omega |x|^\beta |v|^{q+1}\, dx,
\end{equation}
which is well defined for strong solutions thanks to assumption \eqref{eq:subcritical}.

One can use various variational settings to deal with the system \eqref{eq:mainsystem}, see for instance the surveys \cite{BonheureSantosTavares, deFigueiredo,Ruf}. Once the existence of at least one critical point is proved, a natural question is that of the existence of a least energy one, by which we mean a critical point at the level
\[
c=\inf\{E(u,v):\ (u,v) \text{ is a nonzero strong solution of } \eqref{eq:mainsystem}\}.
\]
The solutions at this energy are usually referred to as ground state solutions and in many problems, they are of special interest. In our setting the existence of such solutions is clear and rely on a simple compactness argument. On the other hand, it is useful to get a variational characterisation of these solutions to derive qualitative properties, see for example \cite{BonheureSantosRamosTAMS,BonheureSantosRamosJFA,BonheureSantosTavares}. In our setting, any solution at level $c$ is positive (or negative). This can be established using a Nehari type characterization of the level $c$. We emphasize that the adequate associated Nehari manifold is then of infinite codimension. We refer to \cite{BonheureSantosTavares} for more details.

\medbreak

Existence of sign-changing solutions has been obtained in \cite[Theorem 4]{RamosTavaresZou}, under the extra assumption $p>1$ and $q>1$, where it is proved that there exists an unbounded sequence of solutions $(u_k , v_k )$ such that both $(u_k + v_k )^+\ne 0$ and $(u_k + v_k )^-\ne 0$ for every $k$. In fact, for a pair of solutions $(u,v)$, $u+v$ changes sign if and only if $u$ and $v$ change sign. Our results therefore present some improvement of the result given in \cite[Theorem 4]{RamosTavaresZou} since, by imposing the mere super linearity condition $pq>1$, we are able to prove the existence of a (least energy) nodal solution to \eqref{eq:mainsystem}.

\medbreak

Define then the \emph{least energy nodal level} as
\[
c_{\rm nod}=\inf\{E(u,v):\ (u,v) \text{ is a nonzero strong solution of } \eqref{eq:mainsystem} \text{ and }u^\pm,v^\pm\not \equiv 0\}.
\]
It is not obvious that this level is achieved since this no more follows from a simple minimisation argument. Indeed, even if we have enough compactness to extract a converging subsequence, the limit could be a critical point $(u,v)$ such that both $u$ and $v$ are positive (or negative). The existence of a least energy nodal solution for the scalar Lane-Emden equation \cite{CastroCossioNeuberger,BartschWethTMNA,BartschWethAIHP} follows from the minimisation of the functional over a nodal Nehari set. 
It is not clear at all how such a nodal Nehari set associated to the energy functional $E$ could be defined. Anyhow, our first main result shows $c_{\rm nod}$ is achieved.

\begin{theorem}\label{thm:main1}
Let $N\geq 1$, $\alpha \geq 0$, $\beta\geq 0$ and suppose that \eqref{eq:subcritical} is satisfied. Then the level $c_\text{\rm nod}$ is achieved, that is, there exists a strong solution $(u,v)$ of \eqref{eq:mainsystem} such that $u^{\pm},v^{\pm}\not\equiv 0$ and $E(u,v)=c_{\rm nod}$.
\end{theorem}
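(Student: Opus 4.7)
The strategy is to recast the problem through a scalar variational reduction, then recover $c_{\rm nod}$ as a minimum over a nodal Nehari-type set and show the infimum is attained by compactness. The core difficulty is that the energy $E$ in \eqref{energy E} is strongly indefinite on $H^1_0(\Omega)\times H^1_0(\Omega)$, so no direct nodal Nehari construction is available in the natural variables $(u,v)$.

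\textbf{Step 1 (Reduction to a single equation).} Using \eqref{eq:subcritical}, I would pick $s,t\in(0,2)$ with $s+t=2$ such that the Dirichlet fractional Sobolev embeddings $H^s_0(\Omega)\hookrightarrow L^{p+1}(\Omega,|x|^\alpha dx)$ and $H^t_0(\Omega)\hookrightarrow L^{q+1}(\Omega,|x|^\beta dx)$ are compact. In the spirit of the fractional reductions already used for Hamiltonian systems (see e.g.\ \cite{BonheureSantosRamosJFA, BonheureSantosTavares}), this produces a $C^1$ functional $\Phi$ defined on a single space $E:=H^s_0(\Omega)$, whose critical points are in one-to-one correspondence with the strong solutions of \eqref{eq:mainsystem}. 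The reduction should be arranged so that the reduced variable $w\in E$ inherits the sign structure of $(u,v)$: by the maximum principle and the strong coupling noted in the introduction, $w$ changes sign if and only if $u$ (equivalently $v$) does.

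\textbf{Step 2 (Nodal Nehari characterization).} On the Nehari manifold $\Ncal$ of $\Phi$ I would define
\[
\Ncal_{\rm nod}=\{w\in E:\ w^\pm\not\equiv 0,\ \langle \Phi'(w),w^+\rangle=\langle \Phi'(w),w^-\rangle=0\}.
\]
Exploiting the homogeneity built into the weighted nonlinearities $|x|^\alpha|u|^{p-1}u$ and $|x|^\beta|v|^{q-1}v$, one would show that for every $w$ with $w^\pm\not\equiv0$ there is a unique pair $(t_+,t_-)\in(0,\infty)^2$ with $t_+w^+-t_-w^-\in\Ncal_{\rm nod}$, and that this point is the unique strict maximum of the two-parameter map $(t_+,t_-)\mapsto\Phi(t_+w^+-t_-w^-)$. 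This identifies $c_{\rm nod}=\inf_{\Ncal_{\rm nod}}\Phi$, since a strong solution with $u^\pm,v^\pm\not\equiv0$ corresponds to a critical point of $\Phi$ on $\Ncal_{\rm nod}$, and the first existence result from \cite{RamosTavaresZou} ensures $c_{\rm nod}<\infty$.

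\textbf{Step 3 (Attainment).} Given a minimizing sequence $\{w_n\}\subset\Ncal_{\rm nod}$, the Nehari identities yield uniform bounds on $\|w_n\|_E$ together with \emph{positive} lower bounds on $\|w_n^\pm\|_E$ (from the subcritical Sobolev constants applied to each nodal part). The compact embeddings from Step~1 give, up to subsequence, strong convergence in the appropriate weighted Lebesgue spaces and a weak limit $w\in E$ with $w^\pm\not\equiv0$. Passing to the limit in the Nehari conditions shows $w\in\Ncal_{\rm nod}$ and $\Phi(w)=c_{\rm nod}$. A standard deformation argument on $\Ncal_{\rm nod}$, combined with the unique-scaling property from Step~2, then forces $w$ to be a critical point of $\Phi$; through the correspondence of Step~1 this yields a strong solution $(u,v)$ of \eqref{eq:mainsystem} with $u^\pm,v^\pm\not\equiv0$ and $E(u,v)=c_{\rm nod}$.

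\textbf{Main obstacle.} The delicate point is Step 2: after the scalar reduction, the nonlinear part of $\Phi$ is \emph{not} a sum $F(w^+)+F(w^-)$ since the inverse operators $(-\Delta)^{-s/2}$ and $(-\Delta)^{-t/2}$ mix the positive and negative parts of $w$, so the standard scalar argument that gives a unique pair $(t_+,t_-)$ does not carry over verbatim. I expect to handle this by isolating the cross terms, showing they have a definite sign via the maximum principle applied to the associated Green kernels, and using convexity/monotonicity of the weighted power functions to obtain the desired uniqueness of $(t_+,t_-)$ and the maximization property. Once this is in place, the boundedness and non-vanishing of $\{w_n^\pm\}$ follow from the Nehari relations in a standard way.
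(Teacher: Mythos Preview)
Your overall plan---reduce to a coercive functional, build a nodal Nehari set, minimize---has the right shape, and you correctly flag the cross-term obstacle in Step~2. But Step~1 contains a genuine gap: the references you cite do \emph{not} provide a scalar reduction to a single space $H^s_0(\Omega)$. The fractional frameworks in \cite{BonheureSantosRamosJFA,BonheureSantosTavares} work in a product $E^s\times E^t$ with $s+t=2$; the functional still depends on two independent unknowns, and there is no one-to-one correspondence between critical points of a functional on $H^s_0(\Omega)$ alone and strong solutions of \eqref{eq:mainsystem}. In particular, the sentence ``the reduced variable $w\in E$ inherits the sign structure of $(u,v)$'' has no clear meaning until you specify what $w$ is; for the two-variable reductions available in the literature the sign structure necessarily involves both components. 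Without a valid Step~1, Steps~2--3 cannot be carried out as written. A minor additional point: your appeal to \cite{RamosTavaresZou} to ensure $c_{\rm nod}<\infty$ only covers $p,q>1$, not the full range $pq>1$ allowed by \eqref{eq:subcritical}.

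The paper proceeds differently. It uses the \emph{dual method}: the new variables are the pair $(w_1,w_2)=(|x|^\alpha|u|^{p-1}u,\,|x|^\beta|v|^{q-1}v)$ in $X=L^{(p+1)/p}(\Omega,|x|^{-\alpha/p})\times L^{(q+1)/q}(\Omega,|x|^{-\beta/q})$, and the associated functional $I$ has positive leading part plus a nonlocal quadratic coupling through $K=(-\Delta)^{-1}$. The nodal Nehari set is built from a two-parameter fibering map $\theta(t,s)=I(t^\lambda w_1^+-s^\lambda w_1^-,\,t^\mu w_2^+-s^\mu w_2^-)$ with exponents chosen so that $\lambda+\mu=2$ and $\lambda\frac{p+1}{p}=\mu\frac{q+1}{q}$; uniqueness of its global maximum (your Step~2 obstacle) is obtained on an auxiliary cone $\mathcal N_0$ via a Hessian computation combined with a Poincar\'e--Hopf argument. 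The point you miss entirely is compactness: in $X$, weak convergence does \emph{not} imply convergence of the positive and negative parts, so direct minimization over $\mathcal N_{\rm nod}$ fails---the weak limit need not lie in $\mathcal N_0$. The paper's remedy is an $\varepsilon$-regularization, adding gradient terms $\varepsilon\int|\nabla w_i|^{r_i}$ and passing to a Sobolev subspace $\widetilde X$ where the nodal parts do behave well; for each $\varepsilon>0$ the regularized level $\tilde c^\varepsilon_{\rm nod}$ is attained by a critical point $w_\varepsilon$, one shows $\tilde c^\varepsilon_{\rm nod}\to\tilde c_{\rm nod}$, and then uses the approximate Euler--Lagrange equations to extract \emph{strong} convergence $w_\varepsilon\to w$ in $X$, which is what finally guarantees $w_1^\pm,w_2^\pm\not\equiv0$. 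Your Step~3, resting on compact embeddings you have not established for a space you have not specified, contains none of this mechanism.
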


Our proof relies on a dual method as in Cl\'ement and van der Vorst \cite{ClementvanderVorst} or Alves and Soares \cite{AlvesSoares}\footnote{In order to apply the dual variational method, the two potentials on the left hand sides of \cite[eq. (1.3)]{AlvesSoares} must be equal as follows from \cite[line 5 p. 114]{AlvesSoares}.} who deal with the singularly perturbed system
\begin{equation}\label{system claudianor}
- \varepsilon^2\Delta u+V(x)u=|v|^{q-1}v,\qquad - \varepsilon^2\Delta v+V(x)v=|u|^{p-1}u\qquad \text{ in } \R^N
\end{equation}
assuming the extra assumptions $p> 1$ and $q>1$. With respect to \cite{AlvesSoares}, the hypothesis \eqref{eq:subcritical} includes more general powers, namely $pq>1$ is enough. This means in particular that we cover the biharmonic operator with H\'enon weight, that is $q=1$ and $\beta=0$, with Navier boundary conditions. In this context, the problem \eqref{eq:mainsystem} reads as
\begin{equation}\label{eq:quarta_ordem}
\Delta^2 u= |x|^{\alpha}|u|^{p-1}u \quad \text{ in } \Omega, \qquad u=\Delta u=0\quad \text{ on } \partial \Omega,
\end{equation}
with $\alpha\geq0$, $\frac{1}{2}> \frac{1}{p+1}> \frac{N-4}{2N}$ and Theorem \ref{thm:main1} applies. 

\medbreak

Next we investigate the symmetry of these solutions in case the domain is radial. Let $\Omega$ be either a ball or an annulus centred at the origin. Recall that a function $u:\Omega\to \R$ is called foliated Schwarz symmetric with
respect to some unit vector $p\in \R^N$ if, for a.e. $r > 0$ such that $\partial B_r(0)\subset \Omega$ and for every $c\in \R$, the restricted superlevel set $\{x \in \partial B_r(0) : u(x) \geq c\}$ is either equal to $\partial B_r(0)$ or to a geodesic ball in $\partial B_r(0)$ centred at $rp$. In other words, $u$ is foliated Schwarz symmetric if $u$ is axially symmetric with respect to the axis $\R p$ and non increasing in the polar angle $\theta= \arccos( x\cdot  p) \in [0, \pi]$.  

In the past thirty years the study on the symmetry properties of positive or least energy solutions of strongly coupled elliptic systems has been an active research field, see for instance \cite{Troy,Shaker,deFigueiredo1994,deFigueiredoYang,Zou,BuscaSirakov,QuittnerSouplet,BonheureSantosRamosTAMS,BonheureSantosRamosJFA,BonheureSantosTavares,DamascelliPacella,DamascelliGladialiPacella2, DamasceliGladialiPacella}. The basic tools used to derive the symmetry of the solutions are the method of moving planes \cite{GidasNiNirenberg}, symmetrization or polarization and Morse index arguments. All these techniques were originally developed for second order elliptic equations and their use in the context of strongly coupled elliptic system requires more involved arguments. 

\medbreak

Our second main result is the following.
\begin{theorem}\label{thm:main2}
Let $\Omega \con \R^N$, $N \geq 2$, be either a ball or an annulus centred at the origin. Let $(u,v)$ be a least energy nodal solution of \eqref{eq:mainsystem}. Then there exists $p\in \partial B_1(0)$ such that both $u$ and $v$ are foliated Schwarz symmetric with respect to $p$.
\end{theorem}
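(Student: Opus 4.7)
The plan is to prove the foliated Schwarz symmetry of $u$ first, and then deduce the same property for $v$ via the representation $v = v(u) := (-\Delta)^{-1}(|x|^\alpha |u|^{p-1} u)$. Indeed, on the radial domain $\Omega$ the Dirichlet operator $(-\Delta)^{-1}$ commutes with all rotations, and the strong maximum principle applied to the harmonic-in-the-right-sense difference $v(u) - v(u)\circ\sigma_H$ transfers every half-space comparison from $|u|^{p-1}u$ to $v(u)$; so if $u$ is foliated Schwarz symmetric with some axis $p$, then so is $v$.

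Since the system is strongly coupled, substituting $v = v(u)$ shows that the nodal solution $u$ produced by Theorem~\ref{thm:main1} is a critical point of the reduced scalar functional
\[
I(w) = \frac{p}{p+1}\int_\Omega |x|^\alpha |w|^{p+1}\,dx - \frac{1}{q+1}\int_\Omega |x|^\beta |v(w)|^{q+1}\,dx,
\]
with $I(u) = E(u, v) = c_{\rm nod}$ and $u^\pm \not\equiv 0$. Fix a closed half-space $H \subset \R^N$ with $0 \in \partial H$, let $\sigma_H$ denote the reflection across $\partial H$, and let $u_H$ be the two-point polarization of $u$ ($\max$ on $H$, $\min$ on $\sigma_H(H)$). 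Since $|x|^\alpha$ is radial and polarization permutes the values on each pair $\{x,\sigma_H x\}$, one has $\int_\Omega |x|^\alpha |u_H|^{p+1}\,dx = \int_\Omega |x|^\alpha |u|^{p+1}\,dx$. For the nonlocal term, the reflection monotonicity of the Dirichlet Green's function $G$ on $\Omega$ (namely $G(x,y) \geq G(x, \sigma_H y)$ for $x, y \in H$, a classical consequence of the strong maximum principle on radial domains) combined with the pair-sum identity $v(u_H)(x) + v(u_H)(\sigma_H x) = v(u)(x) + v(u)(\sigma_H x)$, the comparison $v(u_H) \geq (v(u))_H$ on $H$, and the convexity of $t \mapsto |t|^{q+1}$ yields
\[
\int_\Omega |x|^\beta |v(u_H)|^{q+1}\,dx \geq \int_\Omega |x|^\beta |v(u)|^{q+1}\,dx,
\]
with strict inequality unless $u = u_H$ or $u = u_H \circ \sigma_H$ a.e.\ on $\Omega$. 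Consequently $I(u_H) \leq I(u)$ and $u_H^\pm \not\equiv 0$.

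To conclude I would combine this polarization inequality with a Nehari-type characterization of $c_{\rm nod}$ adapted to the nonlocal $I$: using the homogeneity $v(tw) = t^p v(w)$ together with $pq > 1$, for every $w$ with $w^\pm \not\equiv 0$ there is a unique $t(w) > 0$ maximizing $t \mapsto I(tw)$, and $c_{\rm nod}$ is realized as the infimum of $I(t(w) w)$ over a suitable nodal class containing both $u$ and every polarization $u_H$. Minimality of $u$ then forces $I(u_H) = I(u)$, hence equality in the polarization inequality above, hence (by the strict-case analysis) $u = u_H$ or $u = u_H \circ \sigma_H$ a.e.\ on $\Omega$. Since this holds for every half-space $H$ with $0 \in \partial H$, the Brock--Solynin / Smets--Willem characterization produces a unit vector $p \in \partial B_1(0)$ with respect to which $u$ is foliated Schwarz symmetric; the first paragraph then gives the same property for $v$ with the same axis.

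The main obstacle is the construction of a nodal Nehari-type characterization of $c_{\rm nod}$ compatible with polarization for the nonlocal functional $I$: the usual decoupling $u = u^+ + u^-$ breaks down because $v(u^+ + u^-) \neq v(u^+) + v(u^-)$, so either a global one-parameter rescaling argument is used or one passes back to the dual formulation underlying Theorem~\ref{thm:main1}. A secondary, more technical, obstacle is the strict-case analysis of the Green's function polarization inequality on the nodal set of $v(u)$, which is handled by combining strict reflection monotonicity of $G$ on interior pairs with unique continuation for $-\Delta$ to exclude measure-zero degeneracies.
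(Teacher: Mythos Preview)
Your polarization inequality $I(u_H)\le I(u)$ for the reduced scalar functional is correct: the pair--sum identity $v(u_H)(x)+v(u_H)(\sigma_H x)=v(u)(x)+v(u)(\sigma_H x)$ together with $v(u_H)\ge (v(u))_H$ on $H$ (both consequences of the reflection monotonicity of the Green function) and the convexity of $t\mapsto |t|^{q+1}$ do yield $\int_\Omega |x|^\beta |v(u_H)|^{q+1}\ge \int_\Omega |x|^\beta |v(u)|^{q+1}$.

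The genuine gap is the variational characterization you invoke afterwards. The one--parameter claim that $c_{\rm nod}$ coincides with $\inf\{\,\max_{t>0}I(tw):w^\pm\not\equiv 0\,\}$ is \emph{false}: since $I(tw)=A\,t^{p+1}-B\,t^{p(q+1)}$ depends continuously on $w$, letting $w^-\to 0$ while $w^+$ approaches a ground state drives this infimum down to the ground state level $c<c_{\rm nod}$. This is exactly why the scalar nodal theory needs the two constraints $J'(u)u^+=J'(u)u^-=0$ rather than the single Nehari constraint. For your nonlocal functional the natural two--parameter map $(t,s)\mapsto I(tu^+-su^-)$ involves $\bigl|t^pK(|x|^\alpha (u^+)^p)-s^pK(|x|^\alpha (u^-)^p)\bigr|^{q+1}$, which is not polynomial in $(t,s)$, and no analogue of the unique--maximum analysis of Proposition~\ref{prop:unique_maximum} is available. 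So neither the one--parameter nor the obvious two--parameter route closes the argument in the reduced setting, and you do have to pass back to the dual formulation you mention as a fallback.

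This is precisely what the paper does, and it also bypasses your ``secondary obstacle''. Working in the dual variables $(w_1,w_2)$, the only nonlocal term is the bilinear $\int_\Omega w_1Kw_2$; the two--parameter fiber map $\theta_w(t,s)$ of Section~\ref{sec:existence}, with the specific exponents $\lambda,\mu$, then has a tractable structure admitting a unique global maximum. In the proof of Theorem~\ref{thm:main2} one polarizes both components, uses $\int_\Omega uKv\le\int_\Omega u_HK v_H$ (Lemma~\ref{lemma: key_estimate_foliated}) to show $((w_1)_H,(w_2)_H)\in\Ncal_0$, projects onto $\Ncal_{\rm nod}$, and concludes that the polarized pair also attains $c_{\rm nod}$. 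Rather than a strict--inequality analysis of the Green function, Lemma~\ref{lemma:minimum_of_I_is_critical} then gives that $(u_H,v_H)$ is itself a solution of \eqref{eq:mainsystem}, and a direct maximum--principle comparison of the pair $(u_H-u,\,v_H-v)$ on $\Omega\cap H$ forces $u_H\equiv u$ and $v_H\equiv v$ there.
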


\renewcommand{\thefootnote}{\fnsymbol{footnote}}

As mentioned above, our results cover the biharmonic operator complementing therefore some of the results in \cite{WethTMNA2006}. 

\begin{corollary}\label{th:biharmonic}
Let $\Omega \con \R^N$, $N \geq 1$ and assume that $\frac{1}{2}> \frac{1}{p+1}>\frac{N-4}{2N}$. Then the fourth order problem \eqref{eq:quarta_ordem} admits a least energy nodal solution. Moreover, if $\Omega$  is either a ball or an annulus centred at the origin, $N\geq 2$, then any least energy nodal solution of \eqref{eq:quarta_ordem} is such that $u$ and $-\Delta u$ are foliated Schwarz symmetric with respect to the the same unit vector $p\in \R^N$.
\end{corollary}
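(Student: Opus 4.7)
The plan is to reduce the fourth order problem \eqref{eq:quarta_ordem} to the Hamiltonian system \eqref{eq:mainsystem} with $q=1$ and $\beta=0$, and then transfer the conclusions of Theorems \ref{thm:main1} and \ref{thm:main2}. First I would set $v := -\Delta u$. The Navier boundary condition $u = \Delta u = 0$ on $\partial\Omega$ becomes $u = v = 0$ on $\partial\Omega$, and the PDE $\Delta^2 u = |x|^\alpha |u|^{p-1}u$ splits as
\[
-\Delta u = v = |v|^{q-1}v,\qquad -\Delta v = |x|^\alpha |u|^{p-1}u \text{ in } \Omega,
\]
i.e.\ exactly \eqref{eq:mainsystem} with $q=1$, $\beta=0$. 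Plugging these values into \eqref{eq:subcritical} reads $1 > \frac{1}{p+1} + \frac{1}{2} > \frac{N-2}{N}$, which is equivalent to the hypothesis $\frac{1}{2} > \frac{1}{p+1} > \frac{N-4}{2N}$ of the corollary, so the assumptions of Theorem \ref{thm:main1} are met.

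Next I would make the correspondence between critical points and energies precise. Given a strong solution $u$ of \eqref{eq:quarta_ordem}, the function $v = -\Delta u$ lies in the correct Sobolev space (by elliptic regularity) and $(u,v)$ is a strong solution of the system; conversely, any strong solution $(u,v)$ of the system with $q=1$, $\beta=0$ yields $u$ solving \eqref{eq:quarta_ordem}. Integrating by parts (using $u=0$ on $\partial\Omega$) gives $\int_\Omega \nabla u \cdot \nabla v\, dx = \int_\Omega v^2\, dx = \int_\Omega |\Delta u|^2\, dx$, so
\[
E(u,-\Delta u) = \tfrac{1}{2}\int_\Omega |\Delta u|^2\, dx - \tfrac{1}{p+1}\int_\Omega |x|^\alpha |u|^{p+1}\, dx = J(u),
\]
where $J$ is the natural energy functional for \eqref{eq:quarta_ordem}. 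For the nodal aspect, recall from the introductory discussion that for any solution of \eqref{eq:mainsystem} one has $u^\pm \not\equiv 0$ iff $v^\pm \not\equiv 0$; thus $u$ is a nodal solution of \eqref{eq:quarta_ordem} iff $(u,-\Delta u)$ is a nodal solution of the system in the sense used to define $c_\text{nod}$. Consequently the two infima coincide, and a least energy nodal solution of \eqref{eq:quarta_ordem} is exactly the $u$-component of a least energy nodal solution of the corresponding system.

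Finally, Theorem \ref{thm:main1} applied to \eqref{eq:mainsystem} with $q=1$, $\beta=0$ yields the existence of a least energy nodal pair $(u,-\Delta u)$, hence existence of a least energy nodal solution of \eqref{eq:quarta_ordem}. When $N\geq 2$ and $\Omega$ is a ball or annulus centred at the origin, Theorem \ref{thm:main2} further guarantees that \emph{both} $u$ and $v = -\Delta u$ are foliated Schwarz symmetric with respect to a common unit vector $p\in\partial B_1(0)$, which is precisely the conclusion of the corollary.

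There is no serious obstacle; the only point that deserves care is the verification that the correspondence $u\leftrightarrow (u,-\Delta u)$ preserves the notion of least energy nodal solution. This hinges on the sign property of strongly coupled systems already recorded in the paper, which ensures that a sign-changing $u$ forces $-\Delta u$ to change sign as well, so that no nodal solutions are lost in the translation between the two formulations.
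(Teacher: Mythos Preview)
Your proposal is correct and follows exactly the route the paper intends: the corollary is not given a separate proof in the paper, but is stated as an immediate consequence of Theorems \ref{thm:main1} and \ref{thm:main2} applied with $q=1$, $\beta=0$ (the paper explicitly remarks in the introduction that this specialization yields \eqref{eq:quarta_ordem} and that the condition \eqref{eq:subcritical} becomes $\frac{1}{2}>\frac{1}{p+1}>\frac{N-4}{2N}$). Your added verification that the energies and the nodal property transfer under $u\leftrightarrow(u,-\Delta u)$ is a welcome bit of care that the paper leaves implicit.
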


For the scalar Lane-Emden equation (i.e.\ without weights), it is known that any least energy nodal solution has Morse index $2$. Combined with the analysis of the Morse index of the sign changing radial solutions when $\Omega$ is either a ball or an annulus, this leads to the conclusion that whereas least energy solutions are radially symmetric, least energy nodal solutions are not. The foliated Schwarz symmetry is thus somehow optimal. 

For the H\'enon-Lane-Emden system \eqref{eq:mainsystem}, it is not clear how to compute (or even define) the Morse index of the solutions. Although we conjecture that for any $p,q$ satisfying \eqref{eq:subcritical} and $\alpha,\beta\geq 0$ every least energy nodal solutions of \eqref{eq:mainsystem} are non radial, we are not able to prove it. Symmetry breaking occurs at least for $p\sim q$, $\alpha \sim 0$ and $\beta\sim 0$. 

\begin{theorem}\label{thm:symmetrybreak_close_diag}
Assume $N \geq 2$ and $\Omega\subset \R^N$  is either a ball or an annulus centred at the origin. Let $q_0$ satisfy
\begin{equation}\label{assumption on q_0 intro}
q_0>1\quad \text{ and } \quad q_0+1<2N/(N-2)\text{ if } N\geq 3.
\end{equation}
Then there exists $\delta_0>0$ such that, if $p,q\in [q_0-\delta_0,q_0+\delta_0]$, $\alpha,\beta\in [0,\delta_0]$, then any least energy nodal solution $(u,v)$ of \eqref{eq:mainsystem} is such that both $u$ and $v$ are non radially symmetric.
\end{theorem}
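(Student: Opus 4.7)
The plan is to argue by contradiction and reduce the problem to the well-known symmetry breaking result for least energy nodal solutions of the scalar Lane--Emden equation. Suppose there exist sequences $p_n, q_n \to q_0$ and $\alpha_n, \beta_n \to 0^+$ (in the admissible range) and, for each $n$, a least energy nodal solution $(u_n, v_n)$ of \eqref{eq:mainsystem} for which $u_n$ is radially symmetric; since then the right-hand side of the equation for $v_n$ is also radial, $v_n$ is radial as well. The target is to extract in the limit a radial least energy nodal solution of the scalar Lane--Emden equation
\[
-\Delta w = |w|^{q_0 - 1} w \text{ in } \Omega, \qquad w = 0 \text{ on } \partial\Omega,
\]
which is impossible in the subcritical superlinear range for $q_0$ on a ball or annulus (Aftalion--Pacella, Bartsch--Weth).

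To produce the limit, I would first establish the upper bound $c_{\rm nod}(p_n, q_n, \alpha_n, \beta_n) \leq 2 c_{\rm nod}^{\rm LE}(q_0) + o(1)$, where $c_{\rm nod}^{\rm LE}(q_0)$ denotes the least energy nodal level of the scalar equation above. One obtains this by using the competitor $(w, w)$, with $w$ a least energy scalar nodal solution, suitably rescaled to accommodate the perturbed exponents and weights. Together with the Nehari-type identity
\[
E(u_n, v_n) = \Bigl(1 - \tfrac{1}{p_n + 1} - \tfrac{1}{q_n + 1}\Bigr) \int_\Omega \nabla u_n \cdot \nabla v_n\, dx,
\]
this yields a uniform $H^1_0(\Omega)^2$ bound; elliptic regularity combined with subcriticality upgrades it to a uniform $C^1(\overline\Omega)^2$ bound. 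Up to subsequence, $(u_n, v_n) \to (u_*, v_*)$ in $C^1(\overline\Omega)$, and $(u_*, v_*)$ is a radial classical solution of the diagonal system
\[
-\Delta u_* = |v_*|^{q_0-1} v_*, \qquad -\Delta v_* = |u_*|^{q_0-1} u_* \text{ in } \Omega, \qquad u_* = v_* = 0 \text{ on } \partial\Omega.
\]

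The structural heart of the argument is that any solution of this diagonal system must satisfy $u_* = v_*$: testing the identity $-\Delta(u_* - v_*) = |v_*|^{q_0 - 1} v_* - |u_*|^{q_0 - 1} u_*$ against $u_* - v_*$ and invoking the monotonicity of $t \mapsto |t|^{q_0 - 1} t$ yields $\|\nabla(u_* - v_*)\|_{L^2}^2 \leq 0$. Nontriviality of $u_*^\pm$ and $v_*^\pm$ in the limit follows from uniform Nehari-type lower bounds $\|u_n^\pm\|_{L^{p_n+1}}, \|v_n^\pm\|_{L^{q_n+1}} \geq \kappa > 0$, standard for strongly sign-changing solutions. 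Setting $w := u_* = v_*$, we thus obtain a radial nodal solution of the scalar Lane--Emden equation. A matching liminf inequality $c_{\rm nod}(p_n, q_n, \alpha_n, \beta_n) \geq 2 c_{\rm nod}^{\rm LE}(q_0) - o(1)$, obtained by lower semicontinuity of the energies along the extracted subsequence, combined with the elementary identity $E(w, w) = 2 I(w)$ for the scalar Lane--Emden energy $I$, forces $I(w) = c_{\rm nod}^{\rm LE}(q_0)$. Hence $w$ is a radial \emph{least energy} nodal solution of the scalar Lane--Emden equation, contradicting the cited non-radiality result.

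The main technical obstacle is the upper bound on $c_{\rm nod}$ at the perturbed parameters. Since $c_{\rm nod}$ is not defined as a minimization over a manifest Nehari-type set (cf.\ the discussion preceding Theorem \ref{thm:main1}), building a competitor from a scalar nodal function must be carried out within the dual variational framework used to prove Theorem \ref{thm:main1}, with careful tracking of the dependence on $(p, q, \alpha, \beta)$. Once this continuity is in hand, the rigidity $u_* = v_*$ along the diagonal is the clean structural fact that makes the reduction to the scalar case work.
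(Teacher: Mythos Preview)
Your proposal follows essentially the same route as the paper: argue by contradiction, establish continuity of the nodal level $c_{\rm nod}^{p,q,\alpha,\beta}$ as $(p,q,\alpha,\beta)\to(q_0,q_0,0,0)$ via an upper bound obtained from the dual Nehari framework (Lemma~\ref{lemma:upperbound}) together with uniform lower bounds on $\|u_n^\pm\|,\|v_n^\pm\|$ (Lemma~\ref{lemma:lower_bounds}), get uniform $L^\infty$ bounds and $C^{1,\gamma}$ compactness, pass to the limit to obtain a radial least energy nodal solution of the diagonal system, reduce to a scalar solution via $u_*=v_*$, and contradict Aftalion--Pacella. Two small remarks: your argument for $u_*=v_*$ (test $-\Delta(u_*-v_*)$ against $u_*-v_*$ and use monotonicity of $t\mapsto |t|^{q_0-1}t$) is a clean alternative to the paper's H\"older-based proof of Theorem~\ref{lemma:u=v}; and the step where you say the Nehari identity yields a uniform $H^1_0\times H^1_0$ bound is slightly imprecise (it gives a bound on $\int\nabla u_n\cdot\nabla v_n=\int|x|^{\alpha_n}|u_n|^{p_n+1}=\int|x|^{\beta_n}|v_n|^{q_n+1}$, from which one bootstraps via elliptic regularity to $L^\infty$ as the paper does), but this is easily repaired.
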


At this point, we emphasize that when $\Omega\subset \R^N$ is either a ball or an annulus, we can work in a functional framework of radially symmetric functions yielding at least one radial sign-changing solution having least energy among all radial nodal solutions. The previous theorem gives therefore a range of coexistence of both radially symmetric and non radially symmetric sign-changing  solutions. When $p>1$ and $q>1$, the existence of infinitely many radial sign-changing solutions follows also from applying the method of \cite[Theorem 4]{RamosTavaresZou} in a functional framework of radially symmetric functions. Although these solutions can a priori coincide with the solutions obtained in \cite[Theorem 4]{RamosTavaresZou}, we clearly do not expect that to happen. A difficult question seems to be that of providing a precise information on the number of nodes of radial nodal solutions. Both the gluing approach \cite{BartschWillem} and an optimal partition method \cite{ContiTerraciniVerzini}, reminiscent of the original approach of Nehari, seem out of reach for the Lane-Emden system (without weight). Therefore, the existence of radially symmetric solutions with a prescribed number of nodes is a challenging open question.

Finally, we show that all of our results apply to the H\'enon equation
\begin{equation}\label{eq:henon}
-\Delta u = |x|^{\alpha}|u|^{p-1}u \quad \mbox{ in } \Omega, \qquad u =0 \mbox{ on } \partial \Omega.
\end{equation}
For this purpose we prove a kind of symmetry theorem for the components, which guarantees that when $p=q$, $\alpha=\beta$, then any solution $(u,v)$ of \eqref{eq:mainsystem} is such that $u=v$ and whence $u$ solves \eqref{eq:henon}. For related classifications results we mention \cite[Theorem 1.2]{QuittnerSouplet} and \cite{Farina}.

\begin{theorem}\label{lemma:u=v}
Assume that $u,v\in H^1_0(\Omega)$ solves the system \eqref{eq:mainsystem}
\begin{equation}\label{eq:system-p=q}
-\Delta u = |x|^\b |v|^{q-1}v, \qquad -\Delta v =|x|^\b |u|^{q-1}u\quad \mbox{ in } \Omega, \qquad u=v=0 \mbox{ on } \partial \Omega,
\end{equation}
where $\Omega$ is a bounded domain in $\IR^N$, $N\geqslant 1$, $q>1$ and $q+1\le 2N/(N-2)$ if $N\ge 3$. Then $u=v$.
\end{theorem}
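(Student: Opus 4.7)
The plan is to exploit the symmetry of the system under the swap $(u,v)\mapsto (v,u)$ by passing to the difference $w:=u-v$. Subtracting the two equations yields, in the weak sense,
\[
-\Delta w = |x|^\beta\bigl(|v|^{q-1}v - |u|^{q-1}u\bigr) \quad \text{in } \Omega,
\]
and I would test this identity against $w$ itself. Since $u,v\in H^1_0(\Omega)$, so is $w$, and the right-hand side pairs against $w$ in $L^1(\Omega)$: by the Sobolev embedding (using $q+1\le 2N/(N-2)$ if $N\ge 3$, and $H^1_0\hookrightarrow L^r$ for every $r<\infty$ if $N\le 2$) we have $u,v\in L^{q+1}(\Omega)$, hence $|u|^{q-1}u,\,|v|^{q-1}v\in L^{(q+1)/q}(\Omega)$; combined with the boundedness of the weight $|x|^\beta$ on the bounded set $\Omega$ and with $w\in L^{q+1}(\Omega)$, H\"older's inequality closes the integrability check.

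After integration by parts one obtains
\[
\int_\Omega |\nabla w|^2\, dx = \int_\Omega |x|^\beta \bigl(|v|^{q-1}v - |u|^{q-1}u\bigr)(u-v)\, dx.
\]
The decisive observation is that $t\mapsto |t|^{q-1}t$ is strictly increasing on $\mathbb{R}$ (since $q>1$), so the integrand on the right is pointwise non-positive; thus the right-hand side is $\le 0$ while the left-hand side is $\ge 0$. It follows that $\nabla w\equiv 0$ a.e., and since $w\in H^1_0(\Omega)$, we conclude $w\equiv 0$, i.e.\ $u=v$.

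I do not foresee any serious obstacle here, since the argument is entirely elementary once the equations are subtracted. What makes it work is precisely the coincidence of the exponents and of the weights on the two sides of \eqref{eq:system-p=q}: the subtraction then absorbs the two nonlinearities into a single monotone difference. The same trick is evidently unavailable for the general system \eqref{eq:mainsystem} when $p\ne q$ or $\alpha\ne\beta$, which is why the hypothesis $p=q$ with common weight $|x|^\beta$ is essential.
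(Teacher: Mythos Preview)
Your proof is correct and takes a genuinely different route from the paper's. The paper multiplies the first equation by $u$ and the second by $v$, applies H\"older's inequality to bound $\int_\Omega |\nabla u|^2$ and $\int_\Omega |\nabla v|^2$ from above, then multiplies the first equation by $v$ and the second by $u$ to obtain $\int_\Omega \nabla u\cdot\nabla v = \int_\Omega |u|^{q+1}|x|^\beta = \int_\Omega |v|^{q+1}|x|^\beta$; combining these yields $\int_\Omega |\nabla u|^2 + \int_\Omega |\nabla v|^2 \le 2\int_\Omega \nabla u\cdot\nabla v$, i.e.\ $\int_\Omega |\nabla(u-v)|^2 \le 0$. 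You reach the same inequality in one stroke by subtracting the equations and invoking the monotonicity of $t\mapsto |t|^{q-1}t$, bypassing H\"older entirely. Your route is shorter and makes the mechanism more transparent; it also shows, incidentally, that the restriction $q>1$ plays no role in this step, since $t\mapsto |t|^{q-1}t$ is strictly increasing for every $q>0$. The paper's argument, by contrast, has the minor bonus of exhibiting the identity $\int_\Omega |u|^{q+1}|x|^\beta = \int_\Omega |v|^{q+1}|x|^\beta$ along the way.
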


Combining Theorems \ref{thm:main1}, \ref{thm:main2},  \ref{thm:symmetrybreak_close_diag} and \ref{lemma:u=v}, we get the following results about least energy nodal solutions of the H\'enon equation \eqref{eq:henon}. 

\begin{corollary}\label{th:eq.henon}
Let $N\geq 1$, $\alpha \geq 0$, $p>1$ and $p+1< 2N/(N-2)$ if $N\geq 3$. Then:
\begin{enumerate}[i)]
\item There exist least energy nodal solutions of \eqref{eq:henon}.
\item Let $\Omega \con \R^N$, $N \geq 2$, be either a ball or an annulus centred at the origin. Let $u$ be a least energy nodal solution of \eqref{eq:henon}. Then there exists $p\in \partial B_1(0)$ such that $u$ is foliated Schwarz symmetric with respect to $p$.
\item Assume that $\Omega\subset \R^N$, $N \geq 2$, is either a ball or an annulus centred at the origin. Then there exists $\delta_0>0$ such that, if $\alpha \in [0,\delta_0]$, then any least energy nodal solution $u$ of \eqref{eq:henon} is such that $u$ is non radially symmetric.
\end{enumerate}
\end{corollary}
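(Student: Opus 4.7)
The plan is to identify least energy nodal solutions of the H\'enon equation \eqref{eq:henon} with least energy nodal solutions of the diagonal H\'enon--Lane--Emden system in which $p=q$ and $\alpha=\beta$, and then to read off each of (i)--(iii) from the corresponding result about the system. To this end I would first set $q:=p$ and $\beta:=\alpha$, so that \eqref{eq:mainsystem} becomes the system \eqref{eq:system-p=q}. Under the standing assumptions $p>1$ and $p+1<2N/(N-2)$ if $N\geq 3$, the subcriticality hypothesis \eqref{eq:subcritical} is satisfied (both inequalities in (H) collapse to $2<p+1<2N/(N-2)$, with the upper bound vacuous for $N\leq 2$), and the hypotheses of Theorem \ref{lemma:u=v} are met.

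A direct computation yields the energy identity
\[
E(w,w)=\int_\Omega|\nabla w|^2\,dx-\frac{2}{p+1}\int_\Omega|x|^{\alpha}|w|^{p+1}\,dx=2J(w),
\]
where $J(w)=\tfrac12\int_\Omega|\nabla w|^2\,dx-\tfrac{1}{p+1}\int_\Omega|x|^\alpha|w|^{p+1}\,dx$ is the usual H\'enon functional. Consequently, $w\mapsto(w,w)$ sends strong solutions of \eqref{eq:henon} into strong solutions of the diagonal system and preserves the property of being sign-changing. Conversely, Theorem \ref{lemma:u=v} states that every $H^1_0$ solution $(u,v)$ of the diagonal system satisfies $u=v$, so $w\mapsto (w,w)$ is in fact a bijection between sign-changing solutions of \eqref{eq:henon} and sign-changing solutions of the system. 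Combining this bijection with the energy identity gives $c_{\rm nod}^{\rm sys}=2c_{\rm nod}^{\rm eq}$, and identifies the two sets of least energy nodal solutions.

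With this correspondence, each assertion becomes a transcription of a theorem already proved. For (i), Theorem \ref{thm:main1} provides a sign-changing strong solution $(u,v)$ of the diagonal system at level $c_{\rm nod}^{\rm sys}$; Theorem \ref{lemma:u=v} forces $u=v=:w$, and the energy identity shows that $w$ achieves $c_{\rm nod}^{\rm eq}$ and therefore is a least energy nodal solution of \eqref{eq:henon}. For (ii), let $w$ be any least energy nodal solution of \eqref{eq:henon}; by the bijection, $(w,w)$ is a least energy nodal solution of the system on the ball or annulus, and Theorem \ref{thm:main2} yields a vector $p\in\partial B_1(0)$ such that both components of $(w,w)$, i.e.\ $w$ itself, are foliated Schwarz symmetric with respect to $p$. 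For (iii), choose $q_0:=p$, which satisfies \eqref{assumption on q_0 intro}, and let $\delta_0>0$ be as in Theorem \ref{thm:symmetrybreak_close_diag}; then for every $\alpha\in[0,\delta_0]$ the four-tuple $(p,q,\alpha,\beta)=(p,p,\alpha,\alpha)$ lies in the symmetry-breaking range, so every least energy nodal solution $(w,w)$ of the system is non-radial, hence so is $w$.

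There is no real obstacle here: the substance of the argument is contained in the four earlier theorems, and the only piece that is genuinely particular to the corollary is the energy identity together with the rigidity statement of Theorem \ref{lemma:u=v}, which upgrades the inequality $c_{\rm nod}^{\rm sys}\leq 2c_{\rm nod}^{\rm eq}$ (obtained for free from $w\mapsto(w,w)$) to an equality and forces every system-level minimiser to come from an equation-level minimiser. The only point that calls for mild care is that the constant $\delta_0$ in Theorem \ref{thm:symmetrybreak_close_diag} is allowed to depend on $q_0=p$, which is consistent with the statement of part (iii).
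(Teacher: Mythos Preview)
Your proposal is correct and follows precisely the route the paper indicates: the corollary is stated as a direct consequence of combining Theorems \ref{thm:main1}, \ref{thm:main2}, \ref{thm:symmetrybreak_close_diag} and \ref{lemma:u=v}, and your argument spells out exactly this combination via the diagonal embedding $w\mapsto(w,w)$, the energy identity $E(w,w)=2J(w)$, and the rigidity provided by Theorem \ref{lemma:u=v}. The only additional detail worth mentioning is that strong solutions of the system are classical (by the bootstrap remark in the introduction), so they lie in $H^1_0(\Omega)$ and Theorem \ref{lemma:u=v} applies.
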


Items i) and ii) are known cf. \cite{BartschWethWillem, CastroCossioNeuberger,PacellaWeth}. To our knowledge, Corollary \ref{th:eq.henon} iii) is new, though expected from a perturbation analysis for small $\alpha$. However, we stress that the approach of \cite{AftalionPacella} to symmetric breaking of any least energy nodal solution of the autonomous equation
\[
-\Delta u = f(u) \quad \mbox{ in } \Omega, \qquad u =0 \mbox{ on } \partial \Omega,
\]
where $\Omega \con \R^N$, $N \geq 2$, that stands either for a ball or an annulus centred at the origin, cannot be extended to the non autonomous case. In particular it cannot be extended to the H\'enon equation \eqref{eq:henon}. We expect however that least energy nodal solutions of \eqref{eq:henon} are non radial. We provide a proof for $\alpha$ small whereas this should follow from an asymptotic analysis as in \cite{SmetsSuWillem} for $\alpha$ large. The general case seems more delicate. 

The structure of the paper is the following. In Section \ref{sec:existence}, we introduce the variational setting corresponding to the dual method and prove Theorem \ref{thm:main1}, showing the existence of a least energy nodal solution, providing as well alternative characterizations of the level $c_{\rm nod}$. In Section \ref{sec:SchwarzFoliated} we prove the Schwarz foliated symmetry of these solutions when $\Omega$ is a radial bounded domain, namely Theorem \ref{thm:main2}. Finally in Section \ref{eq:Symmetry} we provide some examples of symmetry breaking, proving Theorems \ref{thm:symmetrybreak_close_diag} and \ref{lemma:u=v}.

\section{Existence of a least energy nodal level}\label{sec:existence}

Let us now introduce in a precise way the variational setting corresponding to the dual method. Given $r \geq 1$ and $\gamma\geq0$, we denote
\[
L^r(\Omega,|x|^{-\gamma}):=\{u:\Omega\to \R \text{ measurable}:\ \int_\Omega |u|^r|x|^{-\gamma}\, dx<\infty\},
\]
which is a Banach space equipped with the norm
\[
\|u\|_{r,\gamma}:= \left(\int_\Omega |u|^r|x|^{-\gamma}\, dx\right)^{1/r}.
\]
Observe that, since $\Omega$ is bounded and $\gamma\geq0$, we have the inclusions $L^r(\Omega,|x|^{-\gamma})\subset L^r(\Omega)$, where the last is the usual $L^r(\Omega)\,$- space. In fact, it is easy to check that there exists a constant $C(\Omega)$ such that
\begin{equation}\label{eq:estimate_between_L^r}
\|u\|_r\leq C(\Omega)^\frac{\gamma}{r}\|u\|_{r,\gamma} \qquad \forall u\in L^r(\Omega,|x|^{-\gamma}),\ r>1,\ \gamma\geq 0.
\end{equation}

In an informal basis, the method consists in taking the inverse of the Laplace operator, rewriting the system as
\[
(-\Delta)^{-1}(|x|^\beta |v|^{q-1}v)=u,\qquad (-\Delta)^{-1}(|x|^\alpha|u|^{p-1}u)=v.
\]
and defining $w_1=|x|^\alpha |u|^{p-1}u$, $w_2=|x|^\beta |v|^{q-1}v$, which leads to
\begin{equation}\label{eq:sistema_com_Delta-1}
(-\Delta)^{-1}w_2=|x|^{-\frac{\alpha}{p}}|w_1|^{\frac{1}{p}-1}w_1,\qquad (-\Delta)^{-1}w_1=|x|^{-\frac{\beta}{q}}|\omega_2|^{\frac{1}{q}-1}w_2.
\end{equation}

We will work in the product space
\[
X:=L^{\frac{p+1}{p}}(\Omega, |x|^{-\frac{\alpha}{p}})\times L^{\frac{q+1}{q}}(\Omega, |x|^{-\frac{\beta}{q}}),
\]
\[ 
\|(w_1,w_2)\|:=\|w_1\|_{\frac{p+1}{p},\frac{\alpha}{p}}+ \|w_2\|_{\frac{q+1}{q},\frac{\beta}{q}}\qquad \forall \, w=(w_1,w_2)\in X
  \]
and will use the map $T: X\to L^1(\Omega)$  given by
 \[
 Tw=w_1Kw_2+w_2Kw_1 \qquad w=(w_1,w_2)\in X
 \]
  where, with some abuse of notations,  $K$ denotes the inverse  of the minus Laplace operator with zero Dirichlet boundary condition. We observe that we use the same letter $K$ to denote both the operators $K_{(p+1)/p}: L^{\frac{p+1}{p}}(\Omega)\to  W^{2,\frac{p+1}{p}}(\Omega)\cap W_0^{1,\frac{p+1}{p}}(\Omega)$ and $K_{(q+1)/q}: L^{\frac{q+1}{q}}(\Omega)\to W^{2,\frac{q+1}{q}}(\Omega)\cap W_0^{1,\frac{q+1}{q}}(\Omega)$. Later on, we will use the fact that
 \begin{equation}\label{eq:K_tu=K_su}
  u\in L^{t}(\Omega)\cap L^{s}(\Omega)\Rightarrow K_t u = K_s u \qquad (t,s>1),
  \end{equation}
  which is a consequence of the uniqueness of the Dirichlet problem.
 Thanks to the subcriticality, namely the second inequality at \eqref{eq:subcritical}, we have compact embeddings
  \[
   W^{2,\frac{q+1}{q}}(\Omega)\subset L^{p+1}(\Omega), \qquad W^{2,\frac{p+1}{p}}(\Omega)\subset L^{q+1}(\Omega).
  \]
This, together with standard elliptic estimates, yields
\begin{align*}
\int_\Omega \left| w_1 Kw_2 \right|\, dx &\leq \|w_1\|_{\frac{p+1}{p}}\|Kw_2\|_{p+1}\leq C_1\|w_1\|_{\frac{p+1}{p}}\|K w_2\|_{W^{2,\frac{q+1}{q}}}\\
					   &\leq C_2 \|w_1\|_{\frac{p+1}{p}} \|w_2\|_{\frac{q+1}{q}}\leq C_3 \|w_1\|_{\frac{p+1}{p},\frac{\alpha}{p}} \|w_2\|_{\frac{q+1}{q},\frac{\beta}{q}},
\end{align*}
and an analogous estimate holds for $ w_2 K w_1$.
Thus
  \begin{equation}\label{eq:maj_for_Tw}
  \int_{\Omega} |Tw| \, dx \leq c \|w_1\|_{\frac{p+1}{p},\frac{\alpha}{p}} \|w_2\|_{\frac{q+1}{q},\frac{\beta}{q}}\qquad  \forall \, w=(w_1,w_2)\in X.
  \end{equation}
  Also, using integration by parts,
  \begin{equation}\label{eq:w_1Kw_2=Kw_1w_2}
  \int_{\Omega}w_1 Kw_2 \, dx= \int_{\Omega} w_2\, Kw_1\, dx   \qquad  \forall \, w=(w_1,w_2)\in X.
  \end{equation}
  
  Now, let $I: X\to \IR$ be the functional defined by
  \begin{multline}\label{energy I}
I(w_1,w_2)=\frac{p}{p+1}\int_{\Omega} |w_1|^{\frac{p+1}{p}}|x|^{-\frac{\alpha}{p}}\, dx+\frac{q}{q+1}\int_{\Omega} |w_2|^{\frac{q+1}{q}}|x|^{-\frac{\beta}{q}}\, dx\\-\frac{1}{2}\int_{\Omega}Tw\,dx.
\end{multline}
It is easy to see that $I$ is a $C^1$ functional and, thanks to \eqref{eq:w_1Kw_2=Kw_1w_2}, that its derivative is given by
\begin{multline*}
I'(w)(\varphi,\psi)= \int_{\Omega} |w_1|^{\frac{1}{p}-1} w_1\varphi |x|^{-\frac{\alpha}{p}}\, dx+\int_{\Omega} |w_2|^{\frac{1}{q}-1} w_2\psi |x|^{-\frac{\beta}{q}}\, dx\\-\int_{\Omega}(\varphi Kw_2+\psi Kw_1)\, dx ,
\end{multline*}
for every $(w_1,w_2), (\phi,\varphi)\in X$. 
In particular, $(w_1,w_2)$ is a critical point of $I$ if and only if \eqref{eq:sistema_com_Delta-1} holds, and so $(u, v):=(|x|^{-\frac{\alpha}{p}}|w_1|^{\frac{1}{p}-1}w_1, |x|^{-\frac{\beta}{q}}|w_2|^{\frac{1}{q}-1}w_2)$ is a strong solution of the original system \eqref{eq:mainsystem}. In this case we have that
\begin{equation}\label{eq:comparision_between_I_E}
I(w_1,w_2)=E(u_1,u_2)=\frac{pq-1}{(p+1)(q+1)}\int_\Omega |u|^{p+1}|x|^\alpha\, dx>0.
\end{equation}

Working in this framework, we can rewrite the least energy nodal level as
\[
c_{\rm nod}=\inf\{I(w_1,w_2):\ w_1^\pm,w_2^\pm\not \equiv 0,\ I'(w_1,w_2)=0\}.
\]
In the following, we will adapt some of the ideas of \cite{AlvesSoares, ClementvanderVorst} into our situation. A first novelty in our arguments consists in introducing the following constants $\lambda$ and $\mu$ (in view of evening the different powers of $w_1$ and $w_2$ in the functional $I$), as well as the introduction of the map $\theta$, in view of obtaining Proposition \ref{prop:unique_maximum} ahead. Let
 \[
 \lambda:=\frac{2p(q+1)}{p+q+2pq}, \qquad \mu=\frac{2q(p+1)}{p+q+2pq},
 \]
 so that
 \[
 \gamma:=\lambda \, \frac{p+1}{p}=\mu\, \frac{q+1}{q} = \frac{2(p+1)(q+1)}{p+ q + 2pq}\in (1,2) \quad \mbox{ and } \qquad \lambda+\mu=2.
 \]
We start by introducing the Nehari type set:
\begin{multline*}
\Ncal_{\rm nod}=\{(w_1, w_2)\in X: \ w_1^{\pm}\not \equiv 0, \ w_2^{\pm}\not \equiv 0 \ \text{ and } \\ I'(w)(\lambda w_1^+,\mu w_2^+)=I'(w)(\lambda w_1^-,\mu w_2^-)=0\}
\end{multline*}
and the level
\[
\tilde c_{\rm nod}=\inf_{w\in \Ncal_{\rm nod}}I(w),
\]
which, we will check later on that, coincides with $c_{\rm nod}$ and that $I$ is positive on $\Ncal_{\rm nod}$; cf. Theorem \ref{thm:levelsequiv} and \eqref{functionalpositivenehari} respectively. The study of this problem will be done by means of a fiber-type map: given $(w_1, w_2) \in X$ such that $w_1^{\pm}\not \equiv 0$ and $w_2^{\pm} \not \equiv 0$, define $\theta=\theta_w:[0, \infty)\times [0, \infty)\to \R$ by
\begin{equation}\label{eq:definition_of_theta}
\theta(t,s)=I(t^\lambda w_1^+-s^\lambda w_1^-,t^\mu w_2^+-s^\mu w_2^-).
\end{equation}
Observe that if $t,s > 0$, then 
\begin{equation}\label{eq:criticalpoint_theta}
\nabla \theta(t,s)=(0,0)\quad \text{ if and only if } \quad (t^\lambda w_1^+-s^\lambda w_1^-,t^\mu w_2^+-s^\mu w_2^-)\in \Ncal_{\rm nod}.
\end{equation}
In what follows it will be important to prove that $\theta_w$ admits a critical point. However, this seems not to hold for every $w\in X$. Indeed, for some $w$ it turns out that $\theta_w$ has no critical points and its supremum is plus infinity. For that reason, we introduce the following auxiliary set
\begin{align*}
\Ncal_0&=\left\{w\in X:\ \begin{array}{c} \lambda \int_\Omega w_1^+ Kw_2\, dx+\mu \int_\Omega w_1 Kw_2^+\, dx >0\\[0.1cm] 
			\lambda \int_\Omega w_1^- Kw_2\, dx+\mu \int_\Omega w_1 Kw_2^-\, dx <0  \end{array}\right\}\\
	   &=\left\{w\in X:\ \lambda C_1+\mu C_2<2B^+,\mu C_1+\lambda C_2<2B^-\right\},
\end{align*}
where we have used the fact that $\lambda+\mu=2$, and the notations
\begin{equation}\label{eq:Bmaismenos}
B^+=\int_\Omega w_1^+ Kw_2^+\, dx>0,\qquad B^-=\int_\Omega w_1^-Kw_2^-\, dx>0, 
\end{equation}
and
\[
C_1=\int_\Omega w_1^+ K w_2^-\, dx>0,\qquad C_2=\int_\Omega w_1^- K w_2^+\, dx>0.
\]
We point out that, by the strong maximum principle, if $w\in {\Ncal}_0$ then $w_1^\pm\not \equiv 0$, $w_2^\pm\not \equiv 0$.
\begin{lemma}\label{lemma:N_nonempty} 
The set $\Ncal_0$ is nonempty and $\Ncal_{\rm nod}\subseteq \Ncal_0$.
\end{lemma}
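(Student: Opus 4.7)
The plan is to handle the two claims separately, working with whichever of the two equivalent formulations of $\Ncal_0$ is more convenient at each step. The equivalence itself follows from decomposing $w_i = w_i^+ - w_i^-$, applying the self-adjointness identity \eqref{eq:w_1Kw_2=Kw_1w_2}, and using $\lambda + \mu = 2$; for example, $\lambda \int_\Omega w_1^+ K w_2 \,dx + \mu \int_\Omega w_1 K w_2^+ \,dx = 2B^+ - \lambda C_1 - \mu C_2$, and similarly for the ``minus'' version.

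For the inclusion $\Ncal_{\rm nod} \subseteq \Ncal_0$, take $w \in \Ncal_{\rm nod}$ and expand the equation $I'(w)(\lambda w_1^+, \mu w_2^+) = 0$ using the explicit formula for $I'(w)$ from the excerpt. The pointwise identities $|w_1|^{1/p-1} w_1 \cdot w_1^+ = (w_1^+)^{(p+1)/p}$ and $|w_1|^{1/p-1} w_1 \cdot w_1^- = -(w_1^-)^{(p+1)/p}$ (and the analogues for $w_2$) rearrange it into
\[
\lambda \int_\Omega (w_1^+)^{\frac{p+1}{p}} |x|^{-\frac{\alpha}{p}}\,dx + \mu \int_\Omega (w_2^+)^{\frac{q+1}{q}} |x|^{-\frac{\beta}{q}}\,dx = \lambda \int_\Omega w_1^+ K w_2 \,dx + \mu \int_\Omega w_2^+ K w_1 \,dx.
\]
The left-hand side is strictly positive because $w_1^+ \not\equiv 0$ and $w_2^+ \not\equiv 0$, so the right-hand side is strictly positive, which is the first defining condition of $\Ncal_0$. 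Applying the same scheme to $I'(w)(\lambda w_1^-, \mu w_2^-) = 0$ produces a negative sign on the nonlinear side, yielding the required strict negativity for the second condition.

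For nonemptiness, I would exhibit an explicit element. Pick two disjoint closed balls $\overline{B_1}, \overline{B_2} \subset \Omega \setminus \{0\}$ and nontrivial nonnegative bumps $\phi_i \in C_c^\infty(B_i)$; then set $w_1 = w_2 = \phi_1 - t \phi_2$ with $t > 0$ a free parameter. Since $B_1 \cap B_2 = \emptyset$, one has $w_i^+ = \phi_1$ and $w_i^- = t \phi_2$, so, writing $a = \int_\Omega \phi_1 K \phi_1$, $b = \int_\Omega \phi_2 K \phi_2$, $c = \int_\Omega \phi_1 K \phi_2$, the four key quantities become $B^+ = a$, $B^- = t^2 b$, $C_1 = C_2 = tc$. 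Via $\lambda + \mu = 2$, both defining inequalities of $\Ncal_0$ collapse to the single open condition $c/b < t < a/c$, which admits a solution iff $c^2 < ab$.

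The main and essentially only obstacle is this strict inequality. I would justify it by noting that $(\phi, \psi) \mapsto \int_\Omega \phi K \psi \,dx$ is the $H^{-1}(\Omega)$ inner product, in particular a genuine positive-definite inner product on $L^2(\Omega)$; Cauchy--Schwarz gives $c^2 \leq ab$ with equality only if $\phi_1$ and $\phi_2$ are linearly dependent, which fails since they are both nonzero and have disjoint supports. Thus $c^2 < ab$ strictly, any $t$ in the nonempty interval $(c/b, a/c)$ yields $w \in \Ncal_0$, and the lemma is proved.
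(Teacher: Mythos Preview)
Your argument for the inclusion $\Ncal_{\rm nod}\subseteq \Ncal_0$ is exactly the paper's: expand the two constraints $I'(w)(\lambda w_1^\pm,\mu w_2^\pm)=0$, use the pointwise identities for $|w_i|^{1/p-1}w_i\cdot w_i^\pm$, and read off the required strict signs from the positivity of the weighted norms.

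For nonemptiness your route is genuinely different. The paper simply takes $(w_1,w_2)=(\varphi_2,\varphi_2)$, the second Dirichlet eigenfunction, and uses $K\varphi_2=\varphi_2/\lambda_2$ so that both conditions reduce to $\pm\frac{2}{\lambda_2}\int(\varphi_2^\pm)^2$, which have the right signs immediately. Your construction with disjointly supported bumps $\phi_1,\phi_2$ and a scaling parameter $t$ is more hands-on but also correct: the key strict inequality $c^2<ab$ is precisely the strict Cauchy--Schwarz inequality for the $H^{-1}$ inner product $(\phi,\psi)\mapsto\int_\Omega \phi\,K\psi$, and equality is excluded because $\phi_1,\phi_2$ are nonzero with disjoint supports, hence linearly independent. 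One small advantage of your approach is that by placing the supports away from the origin you sidestep any integrability question for the weights $|x|^{-\alpha/p}$, $|x|^{-\beta/q}$; the eigenfunction argument tacitly assumes $\varphi_2\in X$, which is automatic only when these weights are locally integrable (e.g.\ $\alpha<Np$, $\beta<Nq$). On the other hand, the paper's choice is shorter and requires no auxiliary inequality.
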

\begin{proof}
{\it Claim 1} -- $\Ncal_0\neq \emptyset$. Let $\varphi_2$ be the second eigenfunction of $-\Delta$ with zero Dirichlet boundary condition, and denote its eigenvalue by $\lambda_2$. Then $(\varphi_2,\varphi_2)\in \Ncal_0$, as 
\[
\lambda \int_\Omega \varphi_2^+ K\varphi_2\, dx+\mu \int_\Omega \varphi_2 K\varphi_2^+\, dx=\frac{2}{\lambda_2}\int_\Omega (\varphi_2^+)^2>0
\]
and
\[
\lambda \int_\Omega \varphi_2^- K\varphi_2\, dx+\mu \int_\Omega \varphi_2 K\varphi_2^-\, dx=-\frac{2}{\lambda_2}\int_\Omega (\varphi_2^-)^2<0.
\]
{\it Claim 2} -- $\Ncal_{\rm nod}\subseteq \Ncal_0$. This is an immediate consequence of the equalities that define $\Ncal_{\rm nod}$. Indeed, since $w\neq 0$,
\begin{multline*}
\lambda \int_\Omega w_1^+ Kw_2\, dx+\mu \int_\Omega w_1 Kw_2^+\, dx\\
=\lambda \int_\Omega |w_1^+|^{\frac{p+1}{p}}|x|^{-\frac{\alpha}{p}}\, dx+\mu \int_\Omega |w_2^+|^{\frac{q+1}{q}}|x|^{-\frac{\beta}{q}}\, dx>0
\end{multline*}
and
\begin{multline*}
\lambda \int_\Omega w_1^- Kw_2\, dx+\mu \int_\Omega w_1 Kw_2^-\, dx\\
=-\lambda \int_\Omega |w_1^-|^{\frac{p+1}{p}}|x|^{-\frac{\alpha}{p}}\, dx+\mu \int_\Omega |w_2^-|^{\frac{q+1}{q}}|x|^{-\frac{\beta}{q}}\, dx<0. \qedhere
\end{multline*}
\end{proof}

Let us now study in detail the map $\theta$, following a standard procedure. We recall that $\theta = \theta_{w}$ for $w = (w_1, w_2) \in X$ such that $w_1^{\pm}\neq 0$ and $w_2^{\pm} \neq 0$. After a few computations, we can rewrite it as
\[
\theta(t,s)=A^+t^\gamma  + A^-s^\gamma  - B^+ t^2-B^-s^2+C_1 t^\lambda s^\mu + C_2t^\mu s^\lambda,
\]
with
\begin{equation}\label{eq:Amaismenos}
A^\pm= \frac{p}{p+1}\int_\Omega |w_1^\pm|^{\frac{p+1}{p}}|x|^{-\frac{\alpha}{p}}\, dx+\frac{q}{q+1}\int_\Omega |w_2^\pm|^{\frac{q+1}{q}}|x|^{-\frac{\beta}{q}}\, dx>0.
\end{equation}

Our goal is to prove that $\theta_w$, for $w\in \Ncal_0$, admits a unique critical point, which is a global maximum, attained at a pair with positive components. We divide the proof of this fact in several lemmas.

\begin{lemma}\label{lemma:global_maximum}
Let $w\in \Ncal_0$ and take $\theta=\theta_w$. Then $\theta$ has a global maximum at some $(t_0,s_0)$ with $t_0,s_0>0$. Moreover, every local maximum must have positive components.
\end{lemma}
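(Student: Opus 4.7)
The plan is to combine three standard ingredients for a function of the shape of $\theta$: (i) coercivity on the closed first quadrant, (ii) positivity somewhere, and (iii) a boundary‐exclusion argument showing that maxima cannot occur on the coordinate axes. Throughout I would exploit the key algebraic facts $\lambda+\mu=2$, $1<\gamma<2$, and $\lambda,\mu\in(0,2)$, together with the strict inequalities defining $\Ncal_0$ and the positivity of $A^\pm, B^\pm, C_1, C_2$.

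\smallskip

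\noindent\textbf{Coercivity.} First I would apply Young's inequality with conjugate exponents $2/\lambda$ and $2/\mu$ (which are conjugate precisely because $\lambda+\mu=2$) to obtain, for all $t,s\geq 0$,
\[
t^\lambda s^\mu\leq \tfrac{\lambda}{2}t^2+\tfrac{\mu}{2}s^2,\qquad t^\mu s^\lambda\leq \tfrac{\mu}{2}t^2+\tfrac{\lambda}{2}s^2.
\]
Multiplying by $C_1,C_2$ and summing yields
\[
C_1 t^\lambda s^\mu+C_2 t^\mu s^\lambda\leq \tfrac{1}{2}(\lambda C_1+\mu C_2)t^2+\tfrac{1}{2}(\mu C_1+\lambda C_2)s^2.
\]
Because $w\in\Ncal_0$, we have $\lambda C_1+\mu C_2<2B^+$ and $\mu C_1+\lambda C_2<2B^-$, so there exist $\eps_1,\eps_2>0$ with
\[
\theta(t,s)\leq A^+ t^\gamma+A^- s^\gamma -\eps_1 t^2-\eps_2 s^2.
\]
Since $\gamma<2$, the right–hand side tends to $-\infty$ as $t^2+s^2\to\infty$ in $[0,\infty)^2$, proving coercivity.

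\smallskip

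\noindent\textbf{Positivity and attainment.} Along the $t$-axis, $\theta(t,0)=A^+ t^\gamma-B^+ t^2=t^\gamma(A^+-B^+ t^{2-\gamma})$, which is strictly positive for small $t>0$ (again because $\gamma<2$ and $A^+>0$). Thus $\sup\theta>0=\theta(0,0)$. Together with the continuity of $\theta$ on $[0,\infty)^2$ and the coercivity above, this gives a global maximum point $(t_0,s_0)\in[0,\infty)^2$.

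\smallskip

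\noindent\textbf{Boundary exclusion.} It remains to rule out $t_0=0$ or $s_0=0$. I would argue as follows: fix any $s_*\geq 0$ and consider
\[
\theta(t,s_*)-\theta(0,s_*)=A^+ t^\gamma -B^+ t^2 +C_1 t^\lambda s_*^\mu + C_2 t^\mu s_*^\lambda.
\]
Because $A^+>0$, $\gamma\in(1,2)$ and the remaining positive terms $C_1 t^\lambda s_*^\mu,\, C_2 t^\mu s_*^\lambda\geq 0$, the dominant term as $t\to 0^+$ is $A^+ t^\gamma$, which is strictly greater than $B^+ t^2$ for small $t>0$. Hence $\theta(t,s_*)>\theta(0,s_*)$ for all sufficiently small $t>0$, ruling out $t_0=0$. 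The symmetric computation (swapping the roles of $t$ and $s$, and using $A^->0$, $\gamma<2$) rules out $s_0=0$. The same local comparison shows that any local maximum of $\theta$ must have both components strictly positive, giving the final claim.

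\smallskip

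\noindent\textbf{Main obstacle.} The only nontrivial step is the coercivity: one needs to realize that the conjugate‐exponent Young inequality converts precisely the two $\Ncal_0$ conditions into dominance of the quadratic terms $-B^\pm t^2,-B^\pm s^2$ over the cross terms. The boundary‐exclusion step is entirely driven by $\gamma<2$ and requires no appeal to $\Ncal_0$.
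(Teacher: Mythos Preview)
Your proof is correct and follows essentially the same approach as the paper: Young's inequality (with exponents $2/\lambda$, $2/\mu$) combined with the $\Ncal_0$ inequalities for coercivity, and a local comparison showing $\theta$ increases when moving off the coordinate axes. The paper's boundary exclusion compares $\theta(t,s)$ with $\theta(t,0)$ for small $s$, while you compare $\theta(t,s_*)$ with $\theta(0,s_*)$ for small $t$; these are the same idea in perpendicular directions. One minor remark: when $s_*>0$, the leading positive term as $t\to 0^+$ is actually $C_1 t^\lambda s_*^\mu$ or $C_2 t^\mu s_*^\lambda$ (since $\lambda,\mu<\gamma$), not $A^+ t^\gamma$; but since all these terms are nonnegative and $A^+ t^\gamma> B^+ t^2$ for small $t$ anyway, your conclusion stands.
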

\begin{proof}
Young's inequality yields
\[
t^\lambda s^\mu\leq \frac{\lambda t^2}{2}+\frac{\mu s^2}{2} \quad \text{ and } \quad t^\mu s^\lambda \leq \frac{\mu t^2}{2}+\frac{\lambda s^2}{2} \qquad \forall \, t,s\geq 0,
\]
and thus
\[
\theta(t,s)\leq A^+ t^\gamma + A^-s^\gamma  + \left(\frac{1}{2}(\lambda C_1+\mu C_2)-B^+\right) t^2+\left(\frac{1}{2}(\mu C_1+\lambda C_2)-B^-\right) s^2.
\]
As $w\in \Ncal_0$, the coefficients of the quadratic terms are negative, hence $\theta(t,s)\to -\infty$ as $|s|+|t|\to +\infty$, and $\theta$ admits a global maximum $(t_0,s_0)$ with \emph{nonnegative} components.

To conclude, let us prove that it cannot happen that either $t_0=0$ or $s_0=0$. This is an immediate consequence of the fact that
\begin{align}
\theta(t,s) &= A^+ t^\gamma -B^+ t^2 + s^\gamma (A^- - B^-s^{2-\gamma})+C_1 t^\lambda s^\mu + C_2 t^\mu s^\lambda >\theta(t,0)
\end{align}
for $s>0$ sufficiently small. Analogously, $\theta(t,s)>\theta(0,s)$ for $t>0$ sufficiently small.
\end{proof}

\begin{lemma}\label{lemma:critical->maximum}
If $(t,s)$ is a critical point of $\theta$ with $t,s>0$, then $(t,s)$ is a non degenerate local maximum. 
\end{lemma}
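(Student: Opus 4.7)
The approach is direct: I would compute the Hessian of $\theta$ at the critical point $(t,s)$ and verify it is negative definite by combining the critical point relations with the algebraic identity $\lambda+\mu=2$. Starting from the closed form
\[
\theta(t,s)=A^+t^\gamma  + A^-s^\gamma  - B^+ t^2-B^-s^2+C_1 t^\lambda s^\mu + C_2t^\mu s^\lambda,
\]
I would write out $\partial_{tt}\theta$, $\partial_{ss}\theta$, $\partial_{ts}\theta$ and then use the Euler-type identities $t\,\partial_t\theta(t,s)=0$ and $s\,\partial_s\theta(t,s)=0$ to eliminate the $B^{\pm}$ terms in the second derivatives. Explicitly, the criticality gives
\[
2B^+ t^2 = \gamma A^+ t^\gamma + \lambda C_1 t^\lambda s^\mu + \mu C_2 t^\mu s^\lambda,\qquad 2B^- s^2 = \gamma A^- s^\gamma + \mu C_1 t^\lambda s^\mu + \lambda C_2 t^\mu s^\lambda.
\]

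Substituting these into the expressions for $t^2\partial_{tt}\theta$ and $s^2\partial_{ss}\theta$, and using $\lambda+\mu=2$ (so that $\lambda-2=-\mu$ and $\mu-2=-\lambda$) to collapse the cross-term coefficients, one finds after a routine rearrangement that
\[
t^2\partial_{tt}\theta = \gamma(\gamma-2)A^+ t^\gamma - \lambda\mu\bigl(C_1 t^\lambda s^\mu + C_2 t^\mu s^\lambda\bigr),
\]
\[
s^2\partial_{ss}\theta = \gamma(\gamma-2)A^- s^\gamma - \lambda\mu\bigl(C_1 t^\lambda s^\mu + C_2 t^\mu s^\lambda\bigr),
\]
\[
ts\,\partial_{ts}\theta = \lambda\mu\bigl(C_1 t^\lambda s^\mu + C_2 t^\mu s^\lambda\bigr).
\]

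Now I set $a:=\gamma(2-\gamma)A^+ t^\gamma$ and $b:=\gamma(2-\gamma)A^- s^\gamma$, both strictly positive because $\gamma\in(1,2)$, $A^{\pm}>0$ and $t,s>0$; and $Z:=\lambda\mu(C_1 t^\lambda s^\mu + C_2 t^\mu s^\lambda)\geq 0$ since $\lambda,\mu\in(0,2)$ and $C_1,C_2\geq 0$. Then $t^2\partial_{tt}\theta = -(a+Z)<0$, and
\[
t^2 s^2 \det\mathrm{Hess}\,\theta(t,s) = (a+Z)(b+Z) - Z^2 = ab + (a+b)Z > 0.
\]
Consequently $\mathrm{Hess}\,\theta(t,s)$ has strictly negative trace and strictly positive determinant, hence is negative definite, and $(t,s)$ is a non-degenerate local maximum.

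There is no serious obstacle: the proof is essentially a second-derivative test. The only non-automatic step is the clean algebraic simplification, which rests on two features built into the setup, namely the choice $\lambda+\mu=2$ together with $\gamma\in(1,2)$ (which together make the pure second-order terms in $\theta$ compatible with Euler elimination), and the sign of the off-diagonal contribution being exactly matched by the cross-term contribution to $\partial_{tt}$ and $\partial_{ss}$. These features are precisely the reason the exponents $\lambda$, $\mu$ and $\gamma$ were introduced in the first place.
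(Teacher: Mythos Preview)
Your proof is correct and follows the same strategy as the paper: compute the Hessian, use the critical point equations to eliminate $B^\pm$, and verify negative definiteness via the sign of $\theta_{tt}$ and of the determinant. Your execution is in fact cleaner than the paper's, since by multiplying through by $t^2,s^2,ts$ and using $\lambda(\lambda-2)=\mu(\mu-2)=-\lambda\mu$ you reduce the determinant check to the one-line identity $(a+Z)(b+Z)-Z^2=ab+(a+b)Z>0$, whereas the paper expands the full product $\theta_{tt}\theta_{ss}-\theta_{ts}^2$ and argues term by term.
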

\begin{proof}
If $(t,s)$ is a critical point of $\theta$, then
\[
2B^+=\gamma A^+ t^{\gamma-2}+\lambda C_1 t^{\lambda-2}s^\mu + \mu C_2 t^{\mu-2}s^\lambda,
\]
and
\[
2B^-=\gamma A^- s^{\gamma-2}+\mu C_1 t^{\lambda}s^{\mu-2} + \lambda C_2 t^{\mu}s^{\lambda-2}.
\]
Thus
\begin{align*}
\theta_{tt}(t,s) &=\gamma(\gamma-1)A^+ t^{\gamma-2}-2B^+ + \lambda (\lambda-1)C_1 t^{\lambda-2} s^{\mu}+\mu (\mu-1)C_2 t^{\mu-2}s^\lambda\\
	      &=\gamma(\gamma-2) A^+ t^{\gamma-2}+\lambda(\lambda-2)C_1 t^{\lambda-2} s^\mu+\mu(\mu-2)C_2 t^{\mu-2}s^\lambda<0,
\end{align*}
\begin{align*}
\theta_{ss}(t,s) &=\gamma(\gamma-1)A^- s^{\gamma-2}-2B^- + \mu (\mu-1)C_1 t^{\lambda} s^{\mu-2}+\lambda (\lambda-1)C_2 t^{\mu}s^{\lambda-2}\\
	      &=\gamma(\gamma-2) A^- s^{\gamma-2}+\mu(\mu-2)C_1 t^\lambda s^{\mu-2}+\lambda(\lambda-2)C_2 t^{\mu}s^{\lambda-2}<0,
\end{align*}
and 
\begin{align*}
\theta_{ts}(t,s)=\lambda \mu C_1 t^{\lambda-1}s^{\mu-1}+\lambda \mu C_2 t^{\mu-1}s^{\lambda-1}.
\end{align*}
The proof is complete as soon as we prove that $\theta_{ts}^2<\theta_{tt}\theta_{ss}$, which is equivalent to
\begin{align}\label{eq:huge_inequality}
\nonumber \lambda^2 \mu^2 &C_1^2 t^{2(\lambda-1)}s^{2(\mu-1)}+\lambda^2\mu^2 C_2^2 t^{2(\mu-1)}s^{2(\lambda-1)}+2\lambda^2\mu^2 C_1 C_2\\
\nonumber     < &\lambda(\lambda-2)\mu(\mu-2) C_1^2 t^{2(\lambda-1)}s^{2(\mu-1)}+\lambda(\lambda-2)\mu(\mu-2)C_2^2 t^{2(\mu-1)}s^{2(\lambda-1)}\\
\nonumber	&+[\lambda^2(\lambda-2)^2+\mu^2(\mu-2)^2]C_1C_2+\gamma^2 (\gamma-2)^2 A^+A^- t^{\gamma-2}s^{\gamma-2}\\
\nonumber	&+\gamma(\gamma-2)\mu(\mu-2)A^+ C_1 t^{\lambda+\gamma-2}s^{\mu-2}+\gamma(\gamma-2)\lambda(\lambda-2) A^- C_1 t^{\lambda-2}s^{\mu+\gamma-2}\\
	&+\gamma(\gamma-2)\lambda(\lambda-2)A^+C_2 t^{\mu+\gamma-2}s^{\lambda-2}+\mu(\mu-2)\gamma(\gamma-2)C_2A^- s^{\lambda+\gamma-2}t^{\mu-2}.
\end{align}
Now $\lambda+\mu=2$ is equivalent to $\lambda^2\mu^2=\lambda(\lambda-2)\mu(\mu-2)$, which in turn implies that
\[
2\lambda^2\mu^2\leq \lambda^2(\lambda-2)^2+\mu^2(\mu-2)^2. 
\] 
Since the last five terms in \eqref{eq:huge_inequality} are positive, combining all of these we prove that the desired inequality holds true.
\end{proof}

\begin{proposition}\label{prop:unique_maximum}
Let $w\in \Ncal_0$. The map $\theta_w$ admits a unique critical point $(t_0,s_0)$ with $t_0,s_0>0$, which corresponds to its unique global maximum. Moreover, the pair $(t_0,s_0)$ can be characterized as being the (unique) solution to the system:
\begin{equation}\label{eq:system_t_s}
\begin{cases}
2B^+=\gamma A^+ t^{\gamma-2}+\lambda C_1 (s/t)^\mu + \mu C_2 (s/t)^\lambda, \\
2B^-=\gamma A^- s^{\gamma-2}+\mu C_1 (t/s)^{\lambda} + \lambda C_2 (t/s)^{\mu}.
\end{cases}
\end{equation}

\end{proposition}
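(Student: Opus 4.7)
The plan has three ingredients. First, Lemma \ref{lemma:global_maximum} already produces a critical point $(t_0,s_0)\in(0,\infty)^2$, namely any global maximum of $\theta_w$ (which lies in the open quadrant and must therefore satisfy $\nabla\theta_w=0$, i.e.\ precisely \eqref{eq:system_t_s}). Second, Lemma \ref{lemma:critical->maximum} guarantees that any other critical point in $(0,\infty)^2$ is a non-degenerate local maximum. So the only nontrivial statement left is \emph{uniqueness} of a critical point with positive components, and it is here that I would put the work.

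For uniqueness I would reduce the $2$-D system \eqref{eq:system_t_s} to a single scalar equation via the change of variable $r:=s/t>0$. Using $s^{\gamma-2}=r^{\gamma-2}t^{\gamma-2}$, the two equations in \eqref{eq:system_t_s} become
\[
t^{2-\gamma}=\frac{\gamma A^+}{H(r)},\qquad s^{2-\gamma}=\frac{\gamma A^-}{G(r)},
\]
with $H(r):=2B^+-\lambda C_1 r^{\mu}-\mu C_2 r^{\lambda}$ and $G(r):=2B^- - \mu C_1 r^{-\lambda}-\lambda C_2 r^{-\mu}$; both must be positive at any critical point. Compatibility $s=rt$, raised to the power $2-\gamma>0$, is then equivalent to the scalar equation
\[
\Phi(r):=A^-H(r)-r^{2-\gamma}A^+G(r)=0.
\]
Using the identity $2-\gamma-\lambda=\mu-\gamma$ (which follows from $\lambda+\mu=2$), one rewrites
\[
\Phi(r)=2A^-B^+ - \lambda A^- C_1 r^{\mu} - \mu A^- C_2 r^{\lambda} - 2A^+B^- r^{2-\gamma} + \mu A^+C_1 r^{\mu-\gamma} + \lambda A^+C_2 r^{\lambda-\gamma}.
\]
Differentiating term by term and invoking the three sign facts $2-\gamma>0$, $\gamma-\lambda=\lambda/p>0$ and $\gamma-\mu=\mu/q>0$, every one of the five resulting monomials in $\Phi'(r)$ carries a strictly negative coefficient. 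Hence $\Phi'<0$ on $(0,\infty)$, so $\Phi$ has at most one positive zero; combined with the existence of a critical point from step one, this gives uniqueness of $r$, and hence of the pair $(t,s)$.

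The only delicate point I foresee is the bookkeeping in the sign check for $\Phi'$: it relies essentially on the three relations $\lambda+\mu=2$, $\gamma=\lambda(p+1)/p=\mu(q+1)/q$ and $\gamma\in(1,2)$ (the last coming from $pq>1$ in \eqref{eq:subcritical}). Once those signs are verified, the equivalence between critical points of $\theta_w$ in $(0,\infty)^2$ and solutions of \eqref{eq:system_t_s} is immediate from $\nabla\theta_w=0$, and the fact that the unique critical point is the global maximum is then just Lemma \ref{lemma:global_maximum}.
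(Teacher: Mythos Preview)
Your argument is correct, and it takes a genuinely different route from the paper's proof. The paper dispenses with uniqueness in three lines via the Poincar\'e--Hopf Theorem: it encloses the interior of the quadrant in a smooth manifold $M$ homeomorphic to a disk (a square $[\eps,L]\times[\eps,L]$ with rounded corners), checks that $-\nabla\theta_w$ points outward on $\partial M$, and then uses that every interior critical point is a non-degenerate local maximum (Lemma~\ref{lemma:critical->maximum}) so each contributes index~$1$; since $\chi(M)=1$, there is exactly one. Your approach instead eliminates one variable via $r=s/t$, solves each equation in \eqref{eq:system_t_s} for $t^{2-\gamma}$ and $s^{2-\gamma}$, and reduces compatibility to the vanishing of an explicit function $\Phi$ which you show is strictly decreasing; the sign computation for $\Phi'$ is clean and rests exactly on the relations you name ($\lambda+\mu=2$, $\gamma=\lambda(p+1)/p=\mu(q+1)/q$, $\gamma\in(1,2)$). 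The paper in fact alludes to this kind of direct argument (``one could argue as in \cite[Proposition 3.2]{TavaresWeth}'') before opting for the topological shortcut. Your route is more elementary and self-contained (no appeal to index theory), while the paper's route is shorter and reuses Lemma~\ref{lemma:critical->maximum} in an essential way rather than only as a sanity check.
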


\begin{proof}
The only think left to prove is the uniqueness statement. One could argue as in \cite[Proposition 3.2]{TavaresWeth}, but here instead we present a shorter argument, which makes use of the Poincar\'e-Hopf Theorem \cite{Milnor}.
Recall that this result states that given $M$ a smooth manifold with boundary, and $X$ a vector field having only isolated zeros $x_i$ ($i\in I$) and such that it points outward on $\partial M$, then
\[
\chi(M)=\sum_{i_I} \text{index}(X,x_i),
\]
where $\chi(\cdot)$ is the Euler characteristic. Let $(t_0,s_0)$ be a global maximum as in Lemma \ref{lemma:global_maximum}. Take $M$ to be a bounded regular set containg $(t_0,s_0)$, which coincides with the square $[\eps,L]\times[\eps,L]$ expect at the corners, where it is smooth. Then $\chi(M)=1$, and $X=-\nabla \theta$ points outward on $\partial M$ for sufficiently small $\eps$ and $L$ large enough. Lemma \ref{lemma:critical->maximum} on the other hand implies that index$(-\nabla \theta, (s,t))=1$ at each critical point $(s,t)$, and thus we prove that $(t_0,s_0)$ is indeed the unique critical point of $\theta_w$.
\end{proof}

\begin{lemma}\label{lemma:minimum_of_I_is_critical}
Let $w\in \Ncal_{\rm nod}$ be such that $I(w)=\tilde c_{\rm nod}$. Then $I'(w)=0$.
\end{lemma}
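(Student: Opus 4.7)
The plan is to argue by contradiction: assuming $I'(w)\neq 0$, I will construct, via a deformation along a pseudo-gradient combined with a Brouwer degree argument on the fiber parametrization $\theta_w$, another element of $\Ncal_{\rm nod}$ with energy strictly below $\tilde c_{\rm nod}$.

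The starting point is that, since $w\in \Ncal_{\rm nod}\subseteq \Ncal_0$, the equivalence \eqref{eq:criticalpoint_theta} gives $\nabla \theta_w(1,1)=0$, and then Proposition \ref{prop:unique_maximum} together with Lemma \ref{lemma:critical->maximum} says that $(1,1)$ is the unique, non-degenerate, global maximum of $\theta_w$ in $(0,\infty)^2$. Fix a small closed square $R=[1-r,1+r]^2$ with $\max_{\partial R}\theta_w<\tilde c_{\rm nod}$. A standard quantitative deformation argument then furnishes, from $\|I'(w)\|>0$ and continuity of $I'$, a continuous family $\eta_\sigma:X\to X$ ($\sigma\in[0,\sigma_0]$) with $\eta_0=\mathrm{id}$, $I\circ \eta_\sigma\leq I$ everywhere, $\eta_\sigma=\mathrm{id}$ outside a ball $B(w,\rho)$, and $I(\eta_\sigma(z))\leq I(z)-\sigma\delta$ on $B(w,\rho/2)$, for some $\delta>0$. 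Shrinking $r$, we may force $\phi_w(R)\subset B(w,\rho/2)$, where
\[
\phi_w(t,s):=(t^\lambda w_1^+-s^\lambda w_1^-,\ t^\mu w_2^+-s^\mu w_2^-).
\]

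The degree step then goes as follows. Introduce the continuous \emph{Nehari defect}
\[
\Lambda(z):=\bigl(I'(z)(\lambda z_1^+,\mu z_2^+),\ I'(z)(\lambda z_1^-,\mu z_2^-)\bigr),
\]
which vanishes (subject to $z_i^\pm\not\equiv 0$) exactly on $\Ncal_{\rm nod}$. A direct computation gives $\Lambda(\phi_w(t,s))=(t\,\partial_t\theta_w,\,-s\,\partial_s\theta_w)$; at $(1,1)$ its Jacobian has determinant $-\det\mathrm{Hess}\,\theta_w(1,1)\neq 0$, so $\deg(\Lambda\circ\phi_w,R,0)=\pm 1$. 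Consider the homotopy $F_\sigma(t,s):=\Lambda(\eta_\sigma(\phi_w(t,s)))$. For $\sigma$ small, the continuity of $z\mapsto z^\pm$ in $L^{(p+1)/p}\times L^{(q+1)/q}$ keeps all four sign parts of $\eta_\sigma(\phi_w(t,s))$ nonzero on $R$; meanwhile, on $\partial R$ the chain $I(\eta_\sigma(\phi_w(t,s)))\leq\theta_w(t,s)\leq\max_{\partial R}\theta_w<\tilde c_{\rm nod}$ precludes any zero of $F_\sigma$ there (otherwise we would already have a point of $\Ncal_{\rm nod}$ with energy below $\tilde c_{\rm nod}$). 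Homotopy invariance then yields a zero $(t_*,s_*)\in R$; the corresponding point $\eta_\sigma(\phi_w(t_*,s_*))$ lies in $\Ncal_{\rm nod}$ but has energy strictly below $\tilde c_{\rm nod}$ (either because $(t_*,s_*)\neq (1,1)$ and then $\theta_w(t_*,s_*)<\tilde c_{\rm nod}$, or because of the $-\sigma\delta$ decrease when $(t_*,s_*)=(1,1)$), delivering the sought contradiction.

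The main difficulty lies precisely in making the degree argument air-tight: one must (i) identify the Jacobian of $\Lambda\circ\phi_w$ at $(1,1)$ with (minus) $\mathrm{Hess}\,\theta_w(1,1)$ so as to exploit the non-degeneracy supplied by Lemma \ref{lemma:critical->maximum}, and (ii) uniformly control the sign structure of $\eta_\sigma(\phi_w(t,s))$ on $R$, so that zeros of $F_\sigma$ truly correspond to points in $\Ncal_{\rm nod}$ and not to spurious degenerations where some $z_i^\pm$ accidentally vanishes.
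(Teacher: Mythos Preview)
Your argument is correct and follows the same overall strategy as the paper: argue by contradiction, deform the fiber map $\phi_w(t,s)=(t^\lambda w_1^+-s^\lambda w_1^-,\,t^\mu w_2^+-s^\mu w_2^-)$ to lower the energy, and then use a topological argument to locate a point on $\Ncal_{\rm nod}$ with energy strictly below $\tilde c_{\rm nod}$.

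The tactical choices differ, though. The paper avoids the quantitative deformation lemma: instead of a pseudo-gradient flow it picks a single direction $v$ with $I'(w)v=-2$ and deforms linearly, $h(t,s)=\phi_w(t,s)+\eta(t,s)v$, with a cutoff $\eta$ vanishing on $\partial D$ and equal to $\eps$ at $(1,1)$. On $\partial D$ the map then reduces to $(t\theta_t,-s\theta_s)$, and explicit sign computations (using $\nabla\theta_w(1,1)=0$) allow an appeal to Miranda's theorem rather than Brouwer degree; the final contradiction comes from the uniqueness of the maximum in Proposition~\ref{prop:unique_maximum}. Your route instead exploits the non-degeneracy from Lemma~\ref{lemma:critical->maximum} to read off $\deg(\Lambda\circ\phi_w,R,0)=\pm1$ directly from the Hessian, and handles the boundary by the energy bound $\max_{\partial R}\theta_w<\tilde c_{\rm nod}$. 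Both variants are standard and equally valid; the paper's is slightly more elementary (no deformation lemma, no degree theory beyond Miranda), while yours trades those hands-on computations for the abstract machinery.
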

\begin{proof}
We will argue as in \cite{deValeriolaWillem}; see also \cite{BartschWethWillem, ContiTerraciniVerzini}. 

{\it Step 1} -- Assume by contradiction that $I'(w)\neq 0$. Then there exists $v\in X$ such that $I'(w)v=-2$. By continuity, there exists a small $\eps>0$ such that
\[
I'( (t^{\lambda}w_1^+-s^{\lambda}w_1^-, t^\mu w_2^+-s^\mu w_2^-)+rv)v<-1\qquad \forall \, 0\leq r\leq \eps, \ |t-1|,|s-1|\leq \eps.
\]
Set $D:=[1-\eps,1+\eps]\times [1-\eps,1+\eps]$. We fix a smooth function $\eta: D \to [0,\eps] $ such that $\eta(1,1)=\eps$ and $\eta=0$ on $\partial D$, and denote
\[
h(t,s)= (h_1(t,s), h_2(t_s)): =(t^{\lambda}w_1^+-s^{\lambda}w_1^-, t^\mu w_2^+-s^\mu w_2^-)+\eta(t,s)v,
\]
$$
H(t,s)=\Big( I'(h(t,s))(\lambda h_1^+(t,s),\mu h_2^+(t,s)), I'(h(t,s))(\lambda h_1^-(t,s),\mu h_2^-(t,s))\Big).$$
By possibly taking a smaller $\eps$, we can insure by continuity that
$$
h(t,s)\in \Ncal_0 \qquad \forall \, (t,s)\in D.$$

{\it Step 2} -- We claim that there exists  $(t_0,s_0)\in D$ such that $H(t_0,s_0)=(0,0)$. To prove this, we use the classical Miranda's Theorem \cite{Miranda}. We will need to compute $H$ on $\partial D$, where as $\eta=0$,
\[
H(t,s)\!=\!\left(\begin{array}{c} I'(t^\lambda w_1^+ - s^\lambda w_1^- , t^\mu w_2^+ - s^\mu w_2^-)(\lambda t^\lambda w_1^+,\mu t^\mu w_2^+)\\
				I'(t^\lambda w_1^+ - s^\lambda w_1^- , t^\mu w_2^+ - s^\mu w_2^-)(\lambda s^\lambda w_1^-,\mu s^\mu w_2^-)
				\end{array}\right)\!=\!\left( \begin{array}{c} t\, \theta_t(t,s) \\ -s\, \theta_s(t,s) \end{array}\right)\!.
\]
We have $\nabla \theta(1,1)=(0,0)$, which tells us that
\[
\gamma A^+-2 B^+ + \lambda C_1+\mu C_2=0,\qquad \gamma A^- - 2B^- +\mu C_1+\lambda C_2=0.
\]
For $s\in [1-\eps,1+\eps]$ we have that, if $t=1+\eps$, then
\begin{align*}
\theta_t &=\gamma A^+ (1+\eps)^{\gamma-1}-2B^+(1+\eps)+\lambda C_1(1+\eps)^{\lambda-1}s^\mu + \mu C_2 (1+\eps)^{\mu-1}s^{\lambda}\\
		&\leq \gamma A^+ (1+\eps)[(1+\eps)^{\gamma-2}-1]<0;
\end{align*}
while if $t=1-\eps$
\[
\theta_t \geq \gamma A^+ (1-\eps)((1-\eps)^{\gamma-2}-1)>0.
\]
Analogously, for $t\in [1-\eps,1+\eps]$, we have
\[
\theta_s<0 \quad \text{for } s=1+\eps,\qquad \theta_s>0 \quad \text{for } s=1-\eps
\]
and the claim follows.

{\it Conclusion} -- By the previous point, which shows that  $h(t_0,s_0)\in {\Ncal}_{{\rm nod}} $, it follows that
\begin{align*}
\tilde c_{{\rm nod}}\leq& I(h(t_0,s_0))\\
=& I(t_0^{\lambda}w_1^+-s_0^{\lambda}w_1^-, t_0^\mu w_2^+-s_0^\mu w_2^-) \\
	&+\int_0^{\eta(t_0,s_0)}I'( (t_0^{\lambda}w_1^+-s_0^{\lambda}w_1^-, t_0^\mu w_2^+-s_0^\mu w_2^-)+rv)v\, dr\\
\leq & \theta_w(t_0,s_0) -\eta(t_0,s_0)\leq  \theta_w(1,1) -\eta(t_0,s_0) = \tilde c_{{\rm nod}}-\eta(t_0,s_0),
\end{align*}
and so $\eta(t_0,s_0)=0$ and, in particular, $\theta_w(t_0,s_0)=\theta_w(1,1)$. By the uniqueness of maximum provided by Proposition \ref{prop:unique_maximum} we must have $(t_0,s_0)=(1,1)$ while, by construction, $\eta(1,1)=\eps>0$, a contradiction.
\end{proof}

Our purpose in the remainder of this section is to prove the following result.

\begin{theorem}\label{thm:levelsequiv}
The number $\tilde c_{\rm nod}$ is attained by a function $w\in \Ncal_{\rm nod}$. Moreover,
\begin{align}\label{align:happy_end}
c_{\rm nod} &= \tilde c_{\rm nod} =\inf_{w\in \Ncal_0} \sup_{t,s> 0} I(t^\lambda w_1^+-s^\lambda w_1^-,t^\mu w_2^+-s^\mu w_2^-)>0.
\end{align}
\end{theorem}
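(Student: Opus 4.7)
The plan is to reduce everything to the attainment of $\tilde c_{\rm nod}$ on $\Ncal_{\rm nod}$, since the three equalities in \eqref{align:happy_end} and positivity then follow essentially formally from the framework already developed. Once a minimizer $w$ is obtained, Lemma \ref{lemma:minimum_of_I_is_critical} upgrades it to a critical point of $I$, and the associated $(u,v)$ realizes $c_{\rm nod}$.

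First I would establish the two equalities and positivity assuming attainment. For $c_{\rm nod}=\tilde c_{\rm nod}$: every sign-changing critical point $w$ of $I$ satisfies $I'(w)(\lambda w_1^\pm,\mu w_2^\pm)=0$ by linearity, hence lies in $\Ncal_{\rm nod}$, giving $\tilde c_{\rm nod}\leq c_{\rm nod}$; Lemma \ref{lemma:minimum_of_I_is_critical} yields the reverse inequality once a minimizer is produced. For the $\inf\!\sup$ identity, Lemma \ref{lemma:N_nonempty} gives $\Ncal_{\rm nod}\subset\Ncal_0$, and for $w\in\Ncal_{\rm nod}$ the unique maximizer of $\theta_w$ provided by Proposition \ref{prop:unique_maximum} is $(1,1)$ with value $I(w)$, so the $\inf\!\sup$ is $\leq\tilde c_{\rm nod}$; conversely, for each $w\in\Ncal_0$ the maximizer $(t_0,s_0)$ from Proposition \ref{prop:unique_maximum} produces via \eqref{eq:criticalpoint_theta} an element $\tilde w\in\Ncal_{\rm nod}$ with $I(\tilde w)=\sup_{t,s>0}\theta_w$, yielding the opposite inequality. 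For positivity, adding the two identities that define $\Ncal_{\rm nod}$ gives $I'(w)(\lambda w_1,\mu w_2)=0$, which with $\lambda+\mu=2$ rewrites $I$ on $\Ncal_{\rm nod}$ as
\[
I(w)=\kappa_p\|w_1\|_{\frac{p+1}{p},\frac{\alpha}{p}}^{\frac{p+1}{p}}+\kappa_q\|w_2\|_{\frac{q+1}{q},\frac{\beta}{q}}^{\frac{q+1}{q}},
\]
with $\kappa_p,\kappa_q$ proportional to $pq-1>0$. A uniform lower bound $\|w_i^\pm\|\geq C_0>0$ on $\Ncal_{\rm nod}$ then follows from the single-sign Nehari identity $\lambda\|w_1^\pm\|^{\frac{p+1}{p}}+\mu\|w_2^\pm\|^{\frac{q+1}{q}}=2B^\pm-\lambda C_1-\mu C_2\leq 2B^\pm$, together with the estimate $B^\pm\leq C\|w_1^\pm\|\|w_2^\pm\|$ (H\"older, \eqref{eq:estimate_between_L^r}, and elliptic regularity), and Young's inequality with conjugate pairs $(\frac{p+1}{p},p+1)$ and $(\frac{q+1}{q},q+1)$; this yields $\|w_1^\pm\|\leq C'\|w_2^\pm\|^p$ and $\|w_2^\pm\|\leq C'\|w_1^\pm\|^q$, which iterate to the claimed lower bound since $pq>1$, so $\tilde c_{\rm nod}>0$.

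The heart of the proof is the attainment. I would start with a minimizing sequence $(w^n)\subset\Ncal_{\rm nod}$: the positive-definite formula above gives boundedness in $X$, so up to a subsequence $w^n\rightharpoonup w$ in $X$, and compactness of the solution operator $K$ (inherited from the compact embeddings $W^{2,(q+1)/q}\hookrightarrow L^{p+1}$ and $W^{2,(p+1)/p}\hookrightarrow L^{q+1}$) gives $Kw_i^n\to Kw_i$ strongly. The main obstacle is that weak convergence alone could make some $w_i^\pm$ degenerate in the limit or weaken the two Nehari identities into inequalities. I would sidestep this by applying Ekeland's variational principle on the codimension-$2$ $C^1$-manifold $\Ncal_{\rm nod}$, upgrading the minimizing sequence to a Palais--Smale one: the transversality of the two constraints at every point of $\Ncal_{\rm nod}$, deducible from the nondegeneracy of the Hessian of $\theta_w$ at $(1,1)$ exploited in Lemma \ref{lemma:critical->maximum}, forces the associated Lagrange multipliers to vanish, so that $I'(w^n)\to 0$ in $X^*$. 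The approximate Euler--Lagrange relation then reads $|w_1^n|^{1/p-1}w_1^n\,|x|^{-\alpha/p}=Kw_2^n+o(1)$, which one inverts to $w_1^n=|x|^{\alpha}|Kw_2^n|^{p-1}Kw_2^n+o(1)$, and analogously for $w_2^n$; the already-established strong convergence of $Kw_i^n$ therefore upgrades to strong convergence of $w^n$ in $X$. Strong convergence transports both the uniform lower bounds $\|w_i^{n,\pm}\|\geq C_0$ and the two Nehari identities to the limit, so $w\in\Ncal_{\rm nod}$ with $I(w)=\tilde c_{\rm nod}$, and Lemma \ref{lemma:minimum_of_I_is_critical} closes the argument.
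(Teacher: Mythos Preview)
The formal parts of your argument---the two equalities in \eqref{align:happy_end} and the strict positivity via the explicit expression of $I$ on $\Ncal_{\rm nod}$ together with the uniform lower bounds on $\|w_i^\pm\|$---are correct and coincide with what the paper does.

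The gap is in your attainment argument. Your plan relies on applying Ekeland's principle on ``the codimension-$2$ $C^1$-manifold $\Ncal_{\rm nod}$'', but $\Ncal_{\rm nod}$ is \emph{not} a $C^1$ manifold: the defining constraints $w\mapsto I'(w)(\lambda w_1^\pm,\mu w_2^\pm)$ involve the truncations $w\mapsto w^\pm$, which are Lipschitz but not differentiable in $L^r$. The nondegeneracy of the Hessian of $\theta_w$ in Lemma~\ref{lemma:critical->maximum} concerns a smooth map of the two real variables $(t,s)$ for \emph{fixed} $w$; it says nothing about the regularity of the constraint maps as functions on $X$. Consequently no Lagrange-multiplier rule is available, and you cannot manufacture a Palais--Smale sequence this way. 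The paper flags exactly this obstacle just after stating the theorem (``the lack of regularity of the nodal Nehari set makes rather tricky the use of Ekeland's principle''). Without an approximate Euler--Lagrange relation, your inversion step $w_1^n=|x|^{\alpha}|Kw_2^n|^{p-1}Kw_2^n+o(1)$ has no foundation, and you are back to the basic difficulty that weak convergence in $X$ does not control the positive and negative parts, so the limit may lose sign change.

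The paper's substitute is a regularization: one adds $\varepsilon$-weighted gradient terms to $I$ and works on the smaller reflexive space $\widetilde X=(W^{1,(p+1)/p}\times W^{1,(q+1)/q})\cap X$, which embeds compactly into $L^{(p+1)/p}\times L^{(q+1)/q}$. For the regularized nodal level $\tilde c_{\rm nod}^\varepsilon$ a minimizing sequence then converges strongly enough to preserve sign change, so $\tilde c_{\rm nod}^\varepsilon$ is attained by some $w_\varepsilon$ solving an approximating quasilinear system. After showing $\tilde c_{\rm nod}^\varepsilon\to\tilde c_{\rm nod}$ and that the $\varepsilon$-gradient terms vanish, one tests the approximate system against $w_{1,\varepsilon}-f_1$ and uses the monotonicity inequalities for $t\mapsto |t|^{1/p-1}t$ to extract strong convergence $w_\varepsilon\to w$ in $X$; the limit $w$ is then a sign-changing critical point of $I$ at level $\tilde c_{\rm nod}$. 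In effect, the regularized minimizers play the role of the PS sequence you wanted Ekeland to produce, but their existence is secured by compactness rather than by differentiating nonsmooth constraints.
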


Observe that this implies our first main result, namely Theorem \ref{thm:main1}. From \eqref{eq:criticalpoint_theta}, Lemma \ref{lemma:N_nonempty} and Proposition \ref{prop:unique_maximum}, we have
\begin{equation}\label{eq:goal_easyequality}
\tilde c_{\rm nod} =\inf_{w\in \Ncal_0} \sup_{t,s> 0} I(t^\lambda w_1^+-s^\lambda w_1^-,t^\mu w_2^+-s^\mu w_2^-).
\end{equation}
So the only thing left to prove is that $\tilde c_{\rm nod}$ is achieved. To this aim, we rely on an indirect argument. The difficulty of a direct approach is that the weak limit of a minimizing sequence of $\tilde c_{\rm nod}$ does not belong necessarily to ${\Ncal_{0}}$, and we cannot project it in $\Ncal_{\rm nod}$. Indeed, even if we can bound from below the norms of the positive and negative parts of a minimizing sequence, the weak convergence in $X$ does not imply the weak convergence of the positive and negative part of the sequence. Observe also that the lack of regularity of the nodal Nehari set makes rather tricky the use of Ekeland's principle to build a Palais-Smale sequence from a minimizing sequence. 

\medbreak

To overpass this difficulty, we regularize the problem by introducing the auxiliary functional 
$I_{\varepsilon}: \widetilde X\to \IR$ defined by
  \begin{align}\label{energy Ieps}
I_{\varepsilon}(w_1,w_2)= & 
\frac{p}{p+1}\left(\varepsilon \int_{\Omega} |\nabla w_1|^{\frac{p+1}{p}}\, dx + \int_{\Omega} |w_1|^{\frac{p+1}{p}}|x|^{-\frac{\alpha}{p}}\, dx\right) \\  & +\frac{q}{q+1}\left(\varepsilon \int_{\Omega} |\nabla w_2|^{\frac{q+1}{q}}\, dx +\int_{\Omega} |w_2|^{\frac{q+1}{q}}|x|^{-\frac{\beta}{q}}\, dx\right) -\frac{1}{2}\int_{\Omega}Tw\,dx,
\end{align}
where $\varepsilon>0$ and $\widetilde X:= (W^{1,\frac{p+1}{p}}(\Omega)\times W^{1,\frac{q+1}{q}}(\Omega))\cap X$. 

Such a regularization is a standard approach to regularize non uniformly elliptic operators, such as the curvature operator, see for instance \cite{Temam,LadyUral,BonHabObOm}. It is surprisingly useful in our context to regularize a zero order term. Actually, the main utility of this approach is that it somehow provides a regularized minimizing sequence which solves an approximating system. The key point is then that the regularization does not affect the geometry of the original functional, while the presence of the gradient terms give rise to Euler-Lagrange equations in which we can pass to the limit. For that, we will exploit the fact that $\widetilde X$ is dense in $X$.

\begin{lemma}
Let $\widetilde X= (W^{1,\frac{p+1}{p}}(\Omega)\times W^{1,\frac{q+1}{q}}(\Omega))\cap X$ be endowed with the norm
\[
\| (w_1, w_2) \|  = \| \nabla w_1\|_{\frac{p+1}{p}} + \| \nabla w_2 \|_{\frac{q+1}{q}} + \| w_1\|_{\frac{p+1}{p}, \frac{\alpha}{p}} + \| w_2\|_{\frac{q+1}{q}, \frac{\beta}{p}}.
\]
Then $\widetilde X$ is a reflexive Banach space which is continuously embedded in $W^{1,\frac{p+1}{p}}(\Omega)\times W^{1,\frac{q+1}{q}}(\Omega)$. Moreover $\widetilde X$ is a dense subspace of $X$. 
\end{lemma}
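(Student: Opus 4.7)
The plan is to establish the four assertions in order: Banach, reflexive, continuous embedding, dense. The continuous embedding $\widetilde X \hookrightarrow W^{1,\frac{p+1}{p}}(\Omega) \times W^{1,\frac{q+1}{q}}(\Omega)$ is immediate from the definition of the norm, since the first two terms already dominate the target norm. For completeness, I take a Cauchy sequence $(w_1^n, w_2^n) \subset \widetilde X$; by design its components are simultaneously Cauchy in $W^{1,\frac{p+1}{p}}(\Omega)$ and in $L^{\frac{p+1}{p}}(\Omega, |x|^{-\frac{\alpha}{p}})$ (resp.\ in $W^{1,\frac{q+1}{q}}(\Omega)$ and $L^{\frac{q+1}{q}}(\Omega, |x|^{-\frac{\beta}{q}})$). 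Since each of these four ambient spaces is complete, I obtain limits that, by extracting a pointwise a.e.\ convergent subsequence, must coincide. Thus the limit lies in $\widetilde X$.

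For reflexivity, the standard trick is to embed $\widetilde X$ isometrically into the product
\[
Y := W^{1,\frac{p+1}{p}}(\Omega) \times W^{1,\frac{q+1}{q}}(\Omega) \times L^{\frac{p+1}{p}}(\Omega, |x|^{-\frac{\alpha}{p}}) \times L^{\frac{q+1}{q}}(\Omega, |x|^{-\frac{\beta}{q}}),
\]
via the diagonal map $(w_1, w_2) \mapsto (w_1, w_2, w_1, w_2)$. Each factor is reflexive ($1 < \frac{p+1}{p}, \frac{q+1}{q} < \infty$; the weighted Lebesgue spaces are genuine $L^r$-spaces for the finite measure $|x|^{-\gamma} dx$), hence so is $Y$. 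The image of $\widetilde X$ is closed in $Y$ by the same pointwise a.e.\ subsequence argument as above, so $\widetilde X$ is reflexive as a closed subspace of a reflexive space.

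The main point is the density of $\widetilde X$ in $X$, which I would obtain by the usual cutoff-and-mollify procedure, adapted to accommodate the singularity of the weight at the origin. Given $(w_1, w_2) \in X$, I treat each component separately. For $w_1$, I first truncate: choose a smooth cutoff $\chi_\delta$ that vanishes in a $\delta$-neighbourhood of $\{0\} \cup \partial\Omega$ and equals $1$ outside a $2\delta$-neighbourhood; by dominated convergence, $\chi_\delta w_1 \to w_1$ in $L^{\frac{p+1}{p}}(\Omega, |x|^{-\frac{\alpha}{p}})$ as $\delta \to 0^+$. Second, I mollify: since $\chi_\delta w_1$ is supported in a set on which $|x|^{-\frac{\alpha}{p}}$ is bounded, standard mollifiers yield $C_c^\infty(\Omega)$-approximations that converge in $L^{\frac{p+1}{p}}$ and hence, because the weight is bounded on the support, in the weighted norm as well. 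A diagonal extraction gives approximants in $C_c^\infty(\Omega) \subset W^{1,\frac{p+1}{p}}(\Omega)$, which therefore belong to $\widetilde X$. The same reasoning applied to $w_2$ completes the proof.

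The only genuinely delicate step is the density, since the presence of the singular weight $|x|^{-\gamma}$ at the origin obstructs a naive mollification of $w_i$ itself; the observation that saves the day is that one may first cut off around $\{0\}$ (and around $\partial\Omega$) before mollifying, because the weighted norm of the tail vanishes by dominated convergence. The remaining three assertions reduce to standard facts once the reflexive ambient product $Y$ is identified.
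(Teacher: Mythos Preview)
Your argument is correct. One small slip: the continuous embedding into $W^{1,\frac{p+1}{p}}(\Omega)\times W^{1,\frac{q+1}{q}}(\Omega)$ does not follow from ``the first two terms'' alone, since the $W^{1,r}$-norm also carries an $L^r$-part; you need the third and fourth terms together with the inclusion $L^r(\Omega,|x|^{-\gamma})\hookrightarrow L^r(\Omega)$ (stated in the paper as \eqref{eq:estimate_between_L^r}) to control $\|w_i\|_{L^r}$. This is a phrasing issue, not a gap.

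For density the paper takes a slightly different packaging of the same idea. Instead of cutting off and mollifying directly in the weighted space, it observes that
\[
T(f,g)=\bigl(f\,|x|^{\alpha/(p+1)},\,g\,|x|^{\beta/(q+1)}\bigr)
\]
is an isometric isomorphism from the \emph{unweighted} product $L^{\frac{p+1}{p}}(\Omega)\times L^{\frac{q+1}{q}}(\Omega)$ onto $X$, so density reduces to exhibiting a dense set in the unweighted product whose image under $T$ lands in $\widetilde X$. Taking $f,g\in C_c^\infty(\Omega)$ multiplied by a radial cutoff $\varphi_\delta$ vanishing near the origin does the job, since $|x|^{\alpha/(p+1)}$ is then smooth on the support. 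Your direct cutoff--then--mollify route reaches the same destination: the cutoff near $0$ is precisely what makes the weight bounded on the relevant support, which is the one genuine point in either argument. The paper's isometry device is cleaner in that it avoids any discussion of mollification in a weighted norm; your approach is more self-contained in that it does not introduce an auxiliary map.
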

\begin{proof}
The first two statements are obvious. To prove the density of $\widetilde X$ in $X$,  first observe that
\[
T : L^{\frac{p+1}{p}}(\Omega) \times L^{\frac{q+1}{q}}(\Omega) \longrightarrow L^{\frac{p+1}{p}}(\Omega, |x|^{- \frac{\alpha}{p}}) \times L^{\frac{q+1}{q}}(\Omega, |x|^{- \frac{\beta}{q}}),
\]
defined by $T(f,g) = (f|x|^{\frac{\alpha}{p+1}},g|x|^{\frac{\beta}{q+1}} )$, is an isometric isomorphism. For each $\delta> 0$, fix $\varphi_{\delta} \in C_c^{\infty}(\R^N)$ such that
\[
0 \leq \varphi_{\delta} \leq 1, \quad \varphi_{\delta}(x) = 1 \ \ \text{if} \ \ |x|\geq 2 \delta \ \ \text{and} \ \ \varphi_{\delta}(x) = 0 \ \ \text{if} \ \ |x| \leq \delta.
\]
Then observe that $A = \{ (f \varphi_{\delta},g \varphi_{\delta}); f,g \in C_c^{\infty}(\Omega), \ \ \delta>0  \}$ is dense in $L^{\frac{p+1}{p}}(\Omega) \times L^{\frac{q+1}{q}}(\Omega)$ and that $T(A) \subset \widetilde X \subset X$.
\end{proof}

One can check easily that the functional $I_\eps$ belongs to $C^1(\widetilde X)$, and
\begin{multline*}
I_{\eps}'(w)(\varphi,\psi)= \eps \int_\Omega |\nabla w_1|^{\frac{1}{p}-1}\nabla w_1\cdot \nabla \varphi\, dx+   \int_{\Omega} |w_1|^{\frac{1}{p}-1} w_1\varphi |x|^{-\frac{\alpha}{p}}\, dx   \\
+ \eps\int_\Omega   |\nabla w_2|^{\frac{1}{q}-1}\nabla w_2\cdot \nabla \psi \, dx + \int_{\Omega} |w_2|^{\frac{1}{q}-1} w_2\psi |x|^{-\frac{\beta}{q}}\, dx-\int_{\Omega}(\varphi Kw_2+\psi Kw_1)\, dx ,
\end{multline*}
for every $(w_1,w_2), (\phi,\varphi)\in \widetilde X$. 
In particular, $(w_1,w_2)$ is a critical point of $I_\eps$ if and only if
\begin{equation}\label{eq:perturbed_system_eps}
\begin{cases}
-\eps \text{div}(|\nabla w_1|^{\frac{1}{p}-1}\nabla w_1)+ |w_1|^{\frac{1}{p}-1} w_1 |x|^{-\frac{\alpha}{p}}= Kw_2\\
-\eps \text{div}(|\nabla w_2|^{\frac{1}{q}-1}\nabla w_2)+ |w_2|^{\frac{1}{q}-1} w_2 |x|^{-\frac{\alpha}{p}}= Kw_1
\end{cases}
\end{equation}
in $\widetilde X^\ast$. Define now
\begin{align*}
\widetilde \Ncal_0&=\left\{w\in \widetilde X:\ \begin{array}{c} \lambda \int_\Omega w_1^+ Kw_2\, dx+\mu \int_\Omega w_1 Kw_2^+\, dx >0\\[0.1cm] 
			\lambda \int_\Omega w_1^- Kw_2\, dx+\mu \int_\Omega w_1 Kw_2^-\, dx <0  \end{array}\right\}
\end{align*}
and, for each $\varepsilon>0$,
\begin{multline*}
\Ncal^\eps_{\rm nod}=\{(w_1, w_2)\in \widetilde X: \ w_1^{\pm}\not \equiv 0, \ w_2^{\pm}\not \equiv 0 \ \text{ and } \\ I_\eps'(w)(\lambda w_1^+,\mu w_2^+)=I_\eps'(w)(\lambda w_1^-,\mu w_2^-)=0\}.
\end{multline*}
Observe that
\[
\Ncal_{\rm nod}^\eps\subset \widetilde \Ncal_0 \subset \Ncal_0.
\]

In the statement of the next lemma and its proof, we keep the definitions of $A^{\pm}$ and $B^{\pm}$ as given in \eqref{eq:Amaismenos} and \eqref{eq:Bmaismenos}.

\begin{lemma}\label{lemma:uniquemax_eps}
Let $w\in \widetilde \Ncal_0$. Then the map $\R^+\times \R^+\to \R$ defined by
\[
(t,s)\mapsto I_\eps(t^\lambda w_1^+-s^\lambda w_1^-,t^\mu w_2^+-s^\mu w_2^-)
\]
admits a unique critical point $(t_0,s_0)$, which is a global maximum. Moreover, the pair $(t_0,s_0)$ can be characterized univocally by the condition
\[
(t^\lambda w_1^+-s^\lambda w_1^-,t^\mu w_2^+-s^\mu w_2^-)\in \Ncal^\eps_{\rm nod}
\]
or equivalently through the system
\begin{equation}\label{eq:system_t_s_eps}
\begin{cases}
2B^+=&\displaystyle \gamma \left(A^+ +\eps\frac{p}{p+1}\int_\Omega |\nabla w_1^+|^\frac{p+1}{p}+\eps \frac{q}{q+1}\int_\Omega |\nabla w_2^+|^\frac{q+1}{q} \right) t^{\gamma-2}\\
        &+\lambda C_1 (s/t)^\mu + \mu C_2 (s/t)^\lambda, \\
2B^-=&\displaystyle \gamma \left(A^- +\eps \frac{p}{p+1}\int_\Omega |\nabla w_1^-|^\frac{p+1}{p}+\eps \frac{q}{q+1}\int_\Omega |\nabla w_2^-|^\frac{q+1}{q}\right) s^{\gamma-2}\\
&+\mu C_1 (t/s)^{\lambda} + \lambda C_2 (t/s)^{\mu}.
\end{cases}
\end{equation}
\end{lemma}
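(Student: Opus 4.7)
My plan is to reduce the lemma to Lemmas \ref{lemma:global_maximum}, \ref{lemma:critical->maximum} and Proposition \ref{prop:unique_maximum} by observing that the regularised fibering map has exactly the same algebraic form as the unperturbed one $\theta_w$, with the coefficients $A^\pm$ merely shifted by a positive quantity. First I will compute
\[
\theta_\varepsilon(t,s) := I_\varepsilon(t^\lambda w_1^+ - s^\lambda w_1^-,\, t^\mu w_2^+ - s^\mu w_2^-).
\]
Because $w_1^+$ and $w_1^-$ have disjoint supports, the weak gradients satisfy $\nabla w_1^\pm = \pm\chi_{\{\pm w_1>0\}}\nabla w_1$, and the identities $\lambda(p+1)/p = \mu(q+1)/q = \gamma$ yield
\[
\int_\Omega |\nabla(t^\lambda w_1^+ - s^\lambda w_1^-)|^{\frac{p+1}{p}}\,dx = t^\gamma \int_\Omega |\nabla w_1^+|^{\frac{p+1}{p}}\,dx + s^\gamma \int_\Omega |\nabla w_1^-|^{\frac{p+1}{p}}\,dx,
\]
together with the analogous identity for $w_2$. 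Collecting and using \eqref{eq:Amaismenos}–\eqref{eq:Bmaismenos} I obtain
\[
\theta_\varepsilon(t,s) = A^+_\varepsilon t^\gamma + A^-_\varepsilon s^\gamma - B^+ t^2 - B^- s^2 + C_1 t^\lambda s^\mu + C_2 t^\mu s^\lambda,
\]
with
\[
A^\pm_\varepsilon := A^\pm + \varepsilon\,\frac{p}{p+1}\int_\Omega |\nabla w_1^\pm|^{\frac{p+1}{p}}\,dx + \varepsilon\,\frac{q}{q+1}\int_\Omega |\nabla w_2^\pm|^{\frac{q+1}{q}}\,dx > 0.
\]

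Next I will transfer the three key statements of the unperturbed case verbatim. Since $\widetilde{\mathcal{N}}_0 \subset \mathcal{N}_0$, the defining inequalities $\lambda C_1 + \mu C_2 < 2B^+$ and $\mu C_1 + \lambda C_2 < 2B^-$ remain in force, so the Young-inequality estimate in the proof of Lemma \ref{lemma:global_maximum} forces $\theta_\varepsilon(t,s) \to -\infty$ as $|(t,s)| \to \infty$ on $[0,\infty)^2$, yielding a global maximum at some $(t_0,s_0)$ with $t_0,s_0>0$. The Hessian computation in Lemma \ref{lemma:critical->maximum} depends only on the positivity of $A^\pm,B^\pm,C_1,C_2$ and the identity $\lambda+\mu=2$, so with $A^\pm_\varepsilon$ in place of $A^\pm$ it still shows that every interior critical point of $\theta_\varepsilon$ is a non-degenerate local maximum. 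The Poincaré–Hopf argument of Proposition \ref{prop:unique_maximum} then gives uniqueness: on a smoothed rectangle $[\delta,L]^2$, the vector field $-\nabla\theta_\varepsilon$ points outward once $\delta$ is small enough (the $\gamma A^\pm_\varepsilon t^{\gamma-1}$ term dominates near the inner boundary) and $L$ is large enough (the $-2B^\pm t$ term dominates on the outer boundary).

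For the characterisation, a direct computation exploiting again the disjoint supports gives, for $t,s>0$,
\[
t\,\partial_t\theta_\varepsilon(t,s) = I'_\varepsilon(\bar w)(\lambda \bar w_1^+,\mu \bar w_2^+), \qquad s\,\partial_s\theta_\varepsilon(t,s) = -\,I'_\varepsilon(\bar w)(\lambda \bar w_1^-,\mu \bar w_2^-),
\]
where $\bar w = (t^\lambda w_1^+-s^\lambda w_1^-,\, t^\mu w_2^+-s^\mu w_2^-)$. Consequently $\nabla\theta_\varepsilon(t,s) = 0$ at an interior point is equivalent to $\bar w \in \mathcal{N}^\varepsilon_{\rm nod}$. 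Writing out $\partial_t\theta_\varepsilon = 0$ and $\partial_s\theta_\varepsilon = 0$ and dividing by $t$ and $s$ respectively produces exactly system \eqref{eq:system_t_s_eps}.

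I do not expect any genuine obstacle, since the whole argument is a line-by-line adaptation of the proofs already given in Section \ref{sec:existence}. The only point that deserves attention is the chain rule for $|\nabla(\cdot)|^{(p+1)/p}$ and $|\nabla(\cdot)|^{(q+1)/q}$ under sign truncation, which is legitimate in the Sobolev setting precisely because of the disjoint-support representation of $\nabla w_1^\pm$ and $\nabla w_2^\pm$ used above.
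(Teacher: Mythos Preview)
Your proposal is correct and follows essentially the same approach as the paper: the paper's own proof simply states that one repeats the arguments of Lemmas \ref{lemma:N_nonempty}--\ref{lemma:critical->maximum} and Proposition \ref{prop:unique_maximum}, replacing $\Ncal_0$, $\Ncal_{\rm nod}$ by $\widetilde\Ncal_0$, $\Ncal^\eps_{\rm nod}$ and $A^\pm$ by $A^\pm_\varepsilon$. Your write-up is in fact more detailed than the paper's, and your remark on the disjoint-support decomposition of $\nabla w_i^\pm$ correctly addresses the only nontrivial verification needed.
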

\begin{proof} Since the functional $I_\eps$ has exactly the same shape and geometry of $I$, it is enough to repeat the proofs of Lemmas \ref{lemma:N_nonempty}-\ref{lemma:critical->maximum} and of Proposition \ref{prop:unique_maximum}, replacing only $\Ncal_0$ and $\Ncal_{\rm nod}$ by $\widetilde \Ncal_0$ and $\Ncal^\eps_{\rm nod}$ respectively, and $A^\pm$ by
\[
A^\pm +\eps\frac{p}{p+1}\int_\Omega |\nabla w_1^\pm|^\frac{p+1}{p}+\eps \frac{q}{q+1}\int_\Omega |\nabla w_2^\pm|^\frac{q+1}{q}. \qedhere
\]
\end{proof}

Define the levels
\[
c^\eps_{\rm nod}=\inf\{I_\eps(w):\ w\in \widetilde X,\ w_1^\pm,w_2^\pm \not\equiv 0,\ I_\eps'(w)=0\}
\]
and
\[
\tilde c^\eps_{\rm nod}=\inf_{\Ncal_{\rm nod}^\eps} I_\eps.
\]
\begin{lemma}\label{lemma:minimum_of_Ieps_is_critical}
Given $\eps>0$, let $w\in \Ncal^\eps_{\rm nod}$ be such that $I_\eps(w)=\tilde c^\eps_{\rm nod}$. Then $I_\eps'(w)=0$.
\end{lemma}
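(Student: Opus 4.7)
My plan is to mimic the deformation argument used in Lemma \ref{lemma:minimum_of_I_is_critical}, now in the ambient space $\widetilde X$ and with the functional $I_\eps$ in place of $I$. Since $I_\eps$ has the same qualitative structure as $I$ (the extra gradient terms are convex and contribute positively to the $A^\pm$-type coefficients in the fiber map), Lemma \ref{lemma:uniquemax_eps} gives us the exact analogue of the uniqueness of maximum needed to close the contradiction.

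Concretely, I argue by contradiction. Suppose $I_\eps'(w) \neq 0$; pick $v \in \widetilde X$ with $I_\eps'(w) v = -2$, and, by continuity of $I_\eps'$ on $\widetilde X$, find $\varepsilon_0 > 0$ so that
\[
I_\eps'\bigl((t^{\lambda} w_1^+ - s^{\lambda} w_1^-,\, t^{\mu} w_2^+ - s^{\mu} w_2^-) + r v\bigr) v < -1
\]
whenever $0 \le r \le \varepsilon_0$ and $|t-1|, |s-1| \le \varepsilon_0$. Fix a smooth cutoff $\eta : D := [1-\varepsilon_0, 1+\varepsilon_0]^2 \to [0, \varepsilon_0]$ with $\eta(1,1) = \varepsilon_0$ and $\eta|_{\partial D} = 0$, and set
\[
h(t,s) = (t^{\lambda} w_1^+ - s^{\lambda} w_1^-,\, t^{\mu} w_2^+ - s^{\mu} w_2^-) + \eta(t,s)\, v \in \widetilde X.
\]
A further shrinking of $\varepsilon_0$, using continuity of the inequalities defining $\widetilde{\mathcal N}_0$, ensures $h(t,s) \in \widetilde{\mathcal N}_0$ throughout $D$. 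This is important so that Lemma \ref{lemma:uniquemax_eps} applies to each $h(t,s)$.

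Next I define $H : D \to \R^2$ by
\[
H(t,s) = \Bigl( I_\eps'(h(t,s))(\lambda h_1^+, \mu h_2^+),\; I_\eps'(h(t,s))(\lambda h_1^-, \mu h_2^-)\Bigr),
\]
and compute $H$ on $\partial D$ where $\eta \equiv 0$: there $H(t,s) = (t\, \theta_{\eps,t}(t,s),\, -s\, \theta_{\eps,s}(t,s))$, where $\theta_\eps(t,s) := I_\eps(t^{\lambda} w_1^+ - s^{\lambda} w_1^-, t^{\mu} w_2^+ - s^{\mu} w_2^-)$. Since $w \in \mathcal N_{\rm nod}^\eps$, we have $\nabla \theta_\eps(1,1) = (0,0)$, i.e.\ the system \eqref{eq:system_t_s_eps} holds at $(t,s)=(1,1)$. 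Using the exact same sign-of-derivative computations as in the proof of Lemma \ref{lemma:minimum_of_I_is_critical} (the coefficient in front of the $t^{\gamma-1}$ term, now enhanced by the positive gradient contributions, still dominates), $\theta_{\eps,t} > 0$ on $\{1-\varepsilon_0\} \times [1-\varepsilon_0, 1+\varepsilon_0]$ and $\theta_{\eps,t} < 0$ on $\{1+\varepsilon_0\} \times [1-\varepsilon_0, 1+\varepsilon_0]$, and analogously for $\theta_{\eps,s}$. Miranda's theorem then yields $(t_0, s_0) \in D$ with $H(t_0, s_0) = (0,0)$, i.e.\ $h(t_0, s_0) \in \mathcal N_{\rm nod}^\eps$.

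To conclude, I integrate along the segment from $(t_0^{\lambda} w_1^+ - s_0^{\lambda} w_1^-, t_0^{\mu} w_2^+ - s_0^{\mu} w_2^-)$ to $h(t_0, s_0)$:
\[
\tilde c_{\rm nod}^\eps \le I_\eps(h(t_0,s_0)) \le \theta_\eps(t_0,s_0) - \eta(t_0,s_0) \le \theta_\eps(1,1) - \eta(t_0,s_0) = \tilde c_{\rm nod}^\eps - \eta(t_0,s_0),
\]
where the second inequality uses the defining bound on $I_\eps' v$ on the segment, and the third uses that $(1,1)$ is the unique maximum of $\theta_\eps$ from Lemma \ref{lemma:uniquemax_eps}. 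Hence $\eta(t_0, s_0) = 0$, which forces $\theta_\eps(t_0,s_0) = \theta_\eps(1,1)$; by uniqueness, $(t_0, s_0) = (1,1)$, contradicting $\eta(1,1) = \varepsilon_0 > 0$.

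The only real obstacle is the outward-pointing verification for $H$ on $\partial D$: one must check that the presence of the small regularization terms $\varepsilon \|\nabla w_i^\pm\|^{\cdot}$ inside the coefficients does not spoil the strict sign of $\theta_{\eps,t}$ and $\theta_{\eps,s}$ on the boundary. This is harmless because these extra terms only enlarge $A^\pm$, and the sign argument in Lemma \ref{lemma:minimum_of_I_is_critical} relied on the estimate $\gamma A^+(1\pm\varepsilon)[(1\pm\varepsilon)^{\gamma - 2} - 1]$, whose sign is preserved under this replacement.
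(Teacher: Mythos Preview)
Your proposal is correct and follows essentially the same approach as the paper: the paper's proof simply states that it ``follows the lines of that of Lemma \ref{lemma:minimum_of_I_is_critical} with obvious changes as in the proof of Lemma \ref{lemma:uniquemax_eps},'' which is precisely the adaptation you carry out in detail (replacing $I$ by $I_\eps$, $X$ by $\widetilde X$, $\Ncal_0$ by $\widetilde\Ncal_0$, and invoking Lemma \ref{lemma:uniquemax_eps} in place of Proposition \ref{prop:unique_maximum}).
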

\begin{proof}
The proof follows the lines of that of Lemma \ref{lemma:minimum_of_I_is_critical} with obvious changes as in the proof of Lemma \ref{lemma:uniquemax_eps}.
\end{proof}

\begin{proposition}\label{prop:levelsequiv_eps}
Given $\eps>0$, the number $c^\eps_{\rm nod}$ is attained by a function $w_\eps \in \Ncal^\eps_{\rm nod}$. Moreover, we have
\begin{equation}\label{eq:levelseps}
c^\eps_{\rm nod} = \tilde c^\eps_{\rm nod} =\inf_{w\in \widetilde \Ncal_0} \sup_{t,s> 0} I(t^\lambda w_1^+-s^\lambda w_1^-,t^\mu w_2^+-s^\mu w_2^-)>0.
\end{equation}
\end{proposition}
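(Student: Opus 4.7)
The proof follows the scheme of Theorem \ref{thm:levelsequiv}, now benefiting from the compactness provided by the regularizing gradient terms. I split the plan into the identification of the three levels, the attainment of $\tilde c^\eps_{\rm nod}$, and the passage to a critical point.

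The identification is essentially algebraic. By Lemma \ref{lemma:uniquemax_eps}, every $w\in\widetilde{\Ncal}_0$ admits a unique dilation $(t_0^\lambda w_1^+-s_0^\lambda w_1^-, t_0^\mu w_2^+-s_0^\mu w_2^-)$ lying in $\Ncal^\eps_{\rm nod}$ and realizing $\sup_{t,s>0}I_\eps(t^\lambda w_1^+-s^\lambda w_1^-,t^\mu w_2^+-s^\mu w_2^-)$; conversely each $w\in\Ncal^\eps_{\rm nod}\subset\widetilde{\Ncal}_0$ realizes its own supremum at $(t,s)=(1,1)$. This gives $\tilde c^\eps_{\rm nod}=\inf_{\widetilde{\Ncal}_0}\sup_{t,s>0} I_\eps$ (the $I$ in \eqref{eq:levelseps} being a misprint for $I_\eps$). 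Adding the two Nehari identities defining $\Ncal^\eps_{\rm nod}$ gives, on this set, $\lambda \tilde E_p+\mu\tilde E_q=\int Tw\,dx$, where $\tilde E_p:=\eps\|\nabla w_1\|_{(p+1)/p}^{(p+1)/p}+\|w_1\|_{(p+1)/p,\alpha/p}^{(p+1)/p}$ and $\tilde E_q$ is its analogue. Substitution reduces $I_\eps(w)$ to the strictly positive combination $\tfrac{p(pq-1)}{(p+1)(p+q+2pq)}\tilde E_p+\tfrac{q(pq-1)}{(q+1)(p+q+2pq)}\tilde E_q$, proving both positivity and coercivity on $\Ncal^\eps_{\rm nod}$ (cf.\ \eqref{eq:comparision_between_I_E}).

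Now the existence step: pick a minimizing sequence $w^n\in\Ncal^\eps_{\rm nod}$. By the formula above, $(w^n)$ is bounded in $\widetilde X$, so up to a subsequence $w^n\rightharpoonup w_\eps$ weakly in $\widetilde X$. The compact Rellich embedding $W^{1,r}(\Omega)\hookrightarrow L^r(\Omega)$ upgrades this to strong convergence of $w^n$ in $L^{(p+1)/p}\times L^{(q+1)/q}$, hence of its positive and negative parts; compactness of $K$ from $L^{(q+1)/q}$ into $L^{p+1}$ (from standard elliptic estimates and the subcritical condition \eqref{eq:subcritical}) then ensures convergence of $B^{n,\pm}$, $C_1^n$, $C_2^n$. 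The key obstacle is to certify that $w_\eps\in\widetilde{\Ncal}_0$: expanding $I_\eps'(w^n)(\lambda w_1^{n,\pm},\mu w_2^{n,\pm})=0$ equates the positive combination $\lambda\tilde E_{p,w_1^{n,\pm}}+\mu\tilde E_{q,w_2^{n,\pm}}$ with a cross term controlled by $C(\|w_1^{n,\pm}\|_{(p+1)/p,\alpha/p}\|w_2^n\|_{(q+1)/q,\beta/q}+\|w_2^{n,\pm}\|_{(q+1)/q,\beta/q}\|w_1^n\|_{(p+1)/p,\alpha/p})$, and a Young inequality argument (using $pq>1$) yields a uniform positive lower bound on $\|w_1^{n,\pm}\|_{(p+1)/p,\alpha/p}$ and $\|w_2^{n,\pm}\|_{(q+1)/q,\beta/q}$. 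Strong Lebesgue convergence then transfers these bounds to the limit and preserves the strict inequalities defining $\widetilde{\Ncal}_0$.

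To conclude, apply Lemma \ref{lemma:uniquemax_eps} to $w_\eps$ to obtain $(t_0,s_0)$ with $\bar w:=(t_0^\lambda w_{\eps,1}^+-s_0^\lambda w_{\eps,1}^-,\, t_0^\mu w_{\eps,2}^+-s_0^\mu w_{\eps,2}^-)\in\Ncal^\eps_{\rm nod}$. Since each $w^n\in\Ncal^\eps_{\rm nod}$ satisfies $I_\eps(w^n)\geq I_\eps(t_0^\lambda w_1^{n,+}-s_0^\lambda w_1^{n,-},t_0^\mu w_2^{n,+}-s_0^\mu w_2^{n,-})$ by Lemma \ref{lemma:uniquemax_eps}, weak lower semicontinuity of the gradient and weighted $L^r$ parts of $I_\eps$ together with strong continuity of $\int T(\cdot)\,dx$ yield
\[
I_\eps(\bar w)\leq\liminf_n I_\eps(t_0^\lambda w_1^{n,+}-s_0^\lambda w_1^{n,-},t_0^\mu w_2^{n,+}-s_0^\mu w_2^{n,-})\leq\lim_n I_\eps(w^n)=\tilde c^\eps_{\rm nod},
\]
so $\bar w$ achieves $\tilde c^\eps_{\rm nod}$. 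Lemma \ref{lemma:minimum_of_Ieps_is_critical} then promotes $\bar w$ to a critical point of $I_\eps$ with $\bar w_i^\pm\not\equiv 0$, giving $c^\eps_{\rm nod}\leq I_\eps(\bar w)=\tilde c^\eps_{\rm nod}$; the reverse inequality is immediate, since every critical point admissible for $c^\eps_{\rm nod}$ automatically lies in $\Ncal^\eps_{\rm nod}$. Setting $w_\eps:=\bar w$ finishes the proof.
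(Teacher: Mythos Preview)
Your overall structure mirrors the paper's proof closely: coercivity via the Nehari identities, boundedness of a minimizing sequence in $\widetilde X$, Rellich compactness for strong $L^r$ convergence of the parts, projection onto $\Ncal^\eps_{\rm nod}$ via Lemma~\ref{lemma:uniquemax_eps}, and weak lower semicontinuity to close. The identification of the three levels and the use of Lemma~\ref{lemma:minimum_of_Ieps_is_critical} are also exactly as in the paper.

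There is, however, a genuine gap in your lower-bound step. The bound you write for the cross term,
\[
C\bigl(\|w_1^{n,\pm}\|_{\frac{p+1}{p},\frac{\alpha}{p}}\,\|w_2^{n}\|_{\frac{q+1}{q},\frac{\beta}{q}}
+\|w_2^{n,\pm}\|_{\frac{q+1}{q},\frac{\beta}{q}}\,\|w_1^{n}\|_{\frac{p+1}{p},\frac{\alpha}{p}}\bigr),
\]
is correct but too weak: once you use boundedness of the full norms $\|w_i^{n}\|$, it becomes an inequality of the form $\lambda a_n^{+}+\mu b_n^{+}\leq CM\bigl((a_n^{+})^{p/(p+1)}+(b_n^{+})^{q/(q+1)}\bigr)$, and no Young-type manipulation extracts a uniform positive lower bound from this (both sides can go to zero together). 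The paper's key observation is that, since $K$ is positivity-preserving, one may discard the nonpositive contributions $-\int w_1^{n,+}Kw_2^{n,-}$ and $-\int w_2^{n,+}Kw_1^{n,-}$ to obtain the \emph{product} estimate
\[
\lambda a_n^{+}+\mu b_n^{+}\leq 2\int_\Omega w_{1,n}^{+}Kw_{2,n}^{+}\,dx
\leq c\,(a_n^{+})^{\frac{p}{p+1}}(b_n^{+})^{\frac{q}{q+1}}.
\]
Young's inequality with conjugate exponents $\tfrac{p+1}{p}$ and $p+1$ then gives $\mu b_n^{+}\leq C(b_n^{+})^{q(p+1)/(q+1)}$, and since $q(p+1)/(q+1)>1$ (this is where $pq>1$ enters) one obtains $b_n^{+}\geq\bar\delta>0$; the bounds for $a_n^{\pm},b_n^{-}$ follow symmetrically. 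With this correction your argument goes through and is essentially the paper's proof.
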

\begin{proof}
{\it Step 1 }-- $\Ncal^\eps_{\rm nod}$ is not empty for every $\eps>0$. This clearly follows from Lemma \ref{lemma:uniquemax_eps}. 

\medbreak

{\it Step 2 }-- boundedness and convergence of minimizing sequences. Let $(w_n)_{n}\subset \Ncal^\eps_{\rm nod}$ be a minimizing sequence for $\tilde c^\eps_\text{nod}$. Denote, by simplicity, 
\[
a_n=\int_\Omega | w_{1,n}|^{\frac{p+1}{p}}|x|^{-\frac{\alpha}{p}}\, dx, \qquad a_n^\pm =\int_\Omega |w_{1,n}^\pm|^{\frac{p+1}{p}}|x|^{-\frac{\alpha}{p}}\, dx
\]
and
\[
b_n=\int_\Omega | w_{2,n}|^{\frac{q+1}{q}}|x|^{-\frac{\beta}{q}}\, dx,\qquad   b_n^\pm =\int_\Omega |w_{2,n}^\pm|^{\frac{q+1}{q}}|x|^{-\frac{\beta}{q}}\, dx.
\]
One has
\begin{multline}\label{PS2}
\lambda \left(a_n^+ + \eps \int_\Omega |\nabla w_{1,n}^+|^\frac{p+1}{p} \, dx \right) +\mu \left(b_n^+ + \eps \int_\Omega |\nabla w_{2,n}^+|^\frac{q+1}{q} \, dx\right)\\
			= \lambda \int_{\Omega}w_{1,n}^{+}Kw_{2,n}\, dx+\mu \int_{\Omega}w_{2,n}^{+}Kw_{1,n}\, dx,
\end{multline}
\begin{multline}\label{PS3}
\lambda \left( a_n^-  + \eps \int_\Omega |\nabla w_{1,n}^-|^\frac{p+1}{p}\, dx \right)+\mu \left(b_n^- + \eps \int_\Omega |\nabla w_{2,n}^-|^\frac{q+1}{q} \, dx\right) \\
			= -\lambda \int_{\Omega}w_{1,n}^{-}Kw_{2,n}\, dx-\mu \int_{\Omega}w_{2,n}^{-}Kw_{1,n}\, dx.
\end{multline}
By adding \eqref{PS2} and \eqref{PS3}, we obtain
\begin{equation*}
\lambda \left(a_n + \eps\int_\Omega |\nabla w_{1,n}|^\frac{p+1}{p}\, dx\right)+\mu \left(b_n + \eps\int_\Omega |\nabla w_{2,n}|^\frac{q+1}{q} \, dx\right)= 2\int_{\Omega}w_{1,n}Kw_{2,n}\, dx
\end{equation*}
and we deduce that
\begin{multline}\label{functionalpositivenehari}
I_\eps(w_n)=\frac{(pq-1)p}{(p+1)(2pq+p+q)} \left(a_n + \eps\int_\Omega |\nabla w_{1,n}|^\frac{p+1}{p}\, dx\right)\\
				 +\frac{(pq-1)q}{(q+1)(2pq+p+q)}  \left(b_n + \eps\int_\Omega |\nabla w_{2,n}|^\frac{q+1}{q} \, dx \right)>0.
\end{multline}
Observe that this shows that $I_\eps$ is positive on $\Ncal^\eps_{\rm nod}$. Therefore $(w_n)_{n}$ is bounded in $\widetilde X$, and up to a subsequence, we have that $w_n\rightharpoonup w$ weakly in $\widetilde X$, strongly in $L^\frac{p+1}{p}(\Omega)\times L^\frac{q+1}{q}(\Omega)$. In particular, $(w_{1,n}^\pm,w_{2,n}^\pm)\to (w_1^\pm,w_2^\pm)$ in $L^\frac{p+1}{p}(\Omega)\times L^\frac{q+1}{q}(\Omega)$.

\medbreak

{\it Step 3} -- $w\in \widetilde \Ncal_0$.  We need to show that 
$$\lambda \int_\Omega w_1^+ Kw_2\, dx+\mu \int_\Omega w_1 Kw_2^+\, dx >0$$
and 
$$\lambda \int_\Omega w_1^- Kw_2\, dx+\mu \int_\Omega w_1 Kw_2^-\, dx <0.$$
From Step 2 and the continuity of $K$, we infer that the right-hand side in \eqref{PS2} and \eqref{PS3} do converge. We now show that the left-hand side in \eqref{PS2} and \eqref{PS3} are bounded away from zero.
Starting from
\eqref{PS2}, we get
\begin{eqnarray}\label{PS6}
\lambda a_n^+ +\mu b_n^+&\leq&  \lambda \int_{\Omega}w_{1,n}^{+}Kw_{2,n}^+\, dx+\mu \int_{\Omega}w_{2,n}^{+}Kw_{1,n}^+\, dx = 2 \int_{\Omega}w_{1,n}^{+}Kw_{2,n}^+,
\end{eqnarray}
which by \eqref{eq:maj_for_Tw} yields, for any $\delta>0$,
\begin{align}\label{PS7}
\lambda a_n^+ +\mu b_n^+&\leq c \|w_{1,n}\|_{\frac{p+1}{p},\frac{\alpha}{p}} \|w_{2,n}\|_{\frac{q+1}{q},\frac{\beta}{q}} \leq \delta a_n^+ +\frac{C}{\delta}(b_n^+)^{\frac{q(p+1)}{q+1}}.
\end{align}
Since $q(p+1)/(q+1)>1$, we deduce that $b_n^+\geq \bar \delta>0$ for some $\bar \delta>0$. The inequalities
\begin{equation}\label{PS8}
 b_n^{-}\geq \bar \delta>0, \qquad a_n^{\pm}\geq \bar \delta>0
\end{equation}
follow by arguing in a similar way.

\medbreak 

{\it Conclusion} -- By Lemma \ref{lemma:uniquemax_eps}, we can take $(t_0,s_0)$ such that 
$$(t_0^{\lambda}w_1^+-s_0^{\lambda}w_1^-,t_0^{\mu}w_2^+-s_0^{\mu}w_2^-)\in \Ncal^\eps_{\rm nod}.$$ 
By the uniqueness assertion in the same lemma and the weak lower semicontinuity of the norm, we infer that
\begin{align*}
\tilde c^\eps_\text{nod} &\leq I_\eps(t_0^{\lambda}w_1^+-s_0^{\lambda}w_1^-,t_0^{\mu}w_2^+-s_0^{\mu}w_2^-)\\
		&\leq \liminf I_\eps(t_0^{\lambda}w_{1,n}^+-s_0^{\lambda}w_{1,n}^-,t_0^{\mu}w_{2,n}^+-s_0^{\mu}w_{2,n}^-)\leq \liminf I_\eps(w_n)=\tilde c^\eps_\text{nod}.
\end{align*}
Hence $(t_0^{\lambda}w_1^+-s_0^{\lambda}w_1^-,t_0^{\mu}w_2^+-s_0^{\mu}w_2^-)\in \Ncal^\eps_{\rm nod}$ achieves $\tilde c^\eps_{\rm nod}$. 

\medbreak

At last, the characterization \eqref{eq:levelseps} of the critical level can be proved in a straightforward way.
\end{proof}

Our strategy to prove Theorem \ref{thm:levelsequiv} now essentially consists in passing to the limit in \eqref{eq:perturbed_system_eps} when $\varepsilon\to0$. As a first step, we prove the convergence of the critical level, namely $\tilde c^\eps_{\rm nod}\to \tilde c_{\rm nod}$ as $\eps\to 0$. We start with two preliminary lemmas.

\begin{lemma}\label{lemma:proj_continuity1}
Take $(w_1,w_2)\in \widetilde \Ncal_0\subset \Ncal_0$ and let $(t_0,s_0)$ be the unique pair such that
\[
t_0,s_0>0,\qquad (t_0^\lambda w_1^+-s_0^\lambda w_1^-,t_0^\mu w_2^+-s_0^\mu w_2^-)\in \Ncal_{\rm nod}
\] 
while, for each $\eps>0$, let $(t_\eps,s_\eps)$ be the unique pair such that
\[
t_\eps,s_\eps>0,\qquad (t_\eps^\lambda w_1^+-s_\eps^\lambda w_1^-,t_\eps^\mu w_2^+-s_\eps^\mu w_2^-)\in \Ncal_{\rm nod}^\eps.
\]
Then 
\[
(t_\eps,s_\eps)\to (t_0,s_0)\qquad \text{ as } \eps\to 0.
\]
\end{lemma}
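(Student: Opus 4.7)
The plan is to show $(t_\eps,s_\eps)_{\eps>0}$ lies in a compact subset of $(0,\infty)\times(0,\infty)$, extract a convergent subsequence, and identify its limit as $(t_0,s_0)$ via the uniqueness statement in Proposition \ref{prop:unique_maximum}. Since $w$ is held fixed, the constants $A^\pm, B^\pm, C_1, C_2$ in \eqref{eq:system_t_s} and \eqref{eq:system_t_s_eps} depend only on $w$, and the perturbed coefficients
\[
G^\pm_\eps := A^\pm+\eps\tfrac{p}{p+1}\int_\Omega|\nabla w_1^\pm|^{\frac{p+1}{p}}\,dx+\eps\tfrac{q}{q+1}\int_\Omega|\nabla w_2^\pm|^{\frac{q+1}{q}}\,dx
\]
are bounded and converge to $A^\pm$ as $\eps\to 0$.

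For the upper bound, I would multiply the two equations in \eqref{eq:system_t_s_eps} by $t_\eps^2$ and $s_\eps^2$ respectively and add them. Using $\lambda+\mu=2$, the coupling contribution becomes $2C_1 t_\eps^\lambda s_\eps^\mu+2C_2 t_\eps^\mu s_\eps^\lambda$, which I would estimate via Young's inequality $t^\lambda s^\mu\leq \tfrac{\lambda}{2}t^2+\tfrac{\mu}{2}s^2$ to obtain
\[
(2B^+-\lambda C_1-\mu C_2)\,t_\eps^2+(2B^--\mu C_1-\lambda C_2)\,s_\eps^2 \leq \gamma G^+_\eps t_\eps^\gamma+\gamma G^-_\eps s_\eps^\gamma.
\]
Since $w\in \widetilde\Ncal_0\subset \Ncal_0$, the coefficients on the left are strictly positive; since $\gamma<2$ and $G^\pm_\eps$ stays bounded as $\eps\to 0$, this yields a uniform upper bound on $t_\eps^2+s_\eps^2$. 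For the lower bound, dropping the nonnegative coupling terms in the first equation of \eqref{eq:system_t_s_eps} multiplied by $t_\eps^2$ gives $2B^+ t_\eps^2\geq \gamma G^+_\eps t_\eps^\gamma\geq \gamma A^+ t_\eps^\gamma$, hence $t_\eps^{2-\gamma}\geq \gamma A^+/(2B^+)>0$; an analogous argument with the second equation handles $s_\eps$.

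The conclusion is then routine: from any sequence $\eps_n\to 0$ I extract a subsequence with $(t_{\eps_n},s_{\eps_n})\to(t^*,s^*)$ where $t^*,s^*>0$. Passing to the limit in \eqref{eq:system_t_s_eps} (the $\eps$-perturbation vanishes while the remaining terms are continuous in $(t,s)$ on $(0,\infty)\times(0,\infty)$), $(t^*,s^*)$ satisfies \eqref{eq:system_t_s}, and the uniqueness part of Proposition \ref{prop:unique_maximum} forces $(t^*,s^*)=(t_0,s_0)$. Since the limit is independent of the subsequence, the whole family converges. The only mildly delicate point is the lower bound, which is handled cleanly by $G^+_\eps\geq A^+$ combined with $2-\gamma>0$.
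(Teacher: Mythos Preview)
Your argument is correct and in fact slightly cleaner than the paper's. The lower bound and the conclusion via Proposition \ref{prop:unique_maximum} are identical to what the paper does. The difference lies in the upper bound: the paper argues by contradiction, assuming $(t_\eps,s_\eps)$ is unbounded and then splitting into several cases according to whether one component stays bounded, both diverge with ratio tending to $0$ or $\infty$, or both diverge with ratio tending to some $l\in(0,\infty)$; each case is shown to contradict either $B^\pm>0$ or the defining inequalities of $\Ncal_0$. You instead recycle the Young-inequality estimate already used in Lemma \ref{lemma:global_maximum} to obtain the single quantitative bound
\[
(2B^+-\lambda C_1-\mu C_2)\,t_\eps^2+(2B^--\mu C_1-\lambda C_2)\,s_\eps^2 \leq \gamma G^+_\eps t_\eps^\gamma+\gamma G^-_\eps s_\eps^\gamma,
\]
which immediately gives the upper bound since the left-hand coefficients are positive by the very definition of $\Ncal_0$ and $\gamma<2$. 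This avoids the case analysis entirely and makes more transparent why the membership $w\in\Ncal_0$ is exactly what is needed; the paper's route, on the other hand, is perhaps more hands-on and shows explicitly which inequality fails in each degenerate scenario.
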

\begin{proof} 
The pair $(t_\eps,s_\eps)$ solves \eqref{eq:system_t_s_eps}, so that
\[
2 B^+ t_\eps^{2-\gamma}\geq \gamma A^+\ \text{ and }\ 2B^-  s_\eps^{\gamma-2} \geq \gamma A^-.
\]
Since $2-\gamma>0$, we infer that $t_\eps,s_\eps\geq a>0$ for some constant $a$ independent of $\eps$. 

\medbreak

For the sake of contradiction, assume that $\{(t_\eps,s_\eps)\}$ is unbounded as $\eps\to 0$. 

\medbreak
\noindent {\it Case 1} --  there exists $b>0$ such that $a\leq s_\eps\leq b$ or $a\leq t_\eps\leq b$ for $\eps\in(0,1]$. In the first alternative, taking the limit in the first equation of \eqref{eq:system_t_s_eps}, we obtain $2B^+=0$ which is a contradiction. In the second alternative, taking the limit in the second equation of \eqref{eq:system_t_s_eps} leads to $2B^-=0$ which is still a contradiction. 

\medbreak
\noindent {\it Case 2} -- both $t_\eps,s_\eps\to +\infty$ as $\eps\to 0$. We divide this case in two subcases. 
\medbreak
\noindent {\it Case 2.1} -- $t_\eps/s_\eps\to +\infty$ or $t_\eps/s_\eps\to 0$. This case leads again to either $2B^+=0$ or $2B^-=0$.
\medbreak
\noindent {\it Case 2.2} -- $t_\eps/s_\eps\to l\in \R^+$. Taking the limit in \eqref{eq:system_t_s_eps} gives
\[\left\{
\begin{array}{l}
2B^+=\lambda C_1(1/l)^\mu+\mu C_2(1/l)^\lambda\\
2B^-=\mu C_1l^\lambda+\lambda C_2 l^\mu.
\end{array}\right.
\]
If $l\leq 1$, then $2B^-\leq \mu C_1+\lambda C_2$ whereas $2B^+<\lambda C_1+\mu C_2$ if $l>1$. In both cases, we obtain an inequality which contradicts the fact that $(w_1,w_2)\in \widetilde \Ncal_0$.

\medbreak

We now conclude that, up to a subsequence, $t_\eps\to \bar t>0$, $s_\eps\to \bar s>0$, which satisfy \eqref{eq:system_t_s}. Hence, the uniqueness assertion in Proposition \ref{prop:unique_maximum} implies $(\bar t,\bar s)=(t_0,s_0)$.
\end{proof}

We just proved the continuity of the projection on $\Ncal_{\rm nod}^\eps$ when $\varepsilon\to 0$. We will need also the continuity of the projection on $\Ncal_{\rm nod}$ with respect to strong convergence in $X$.

\begin{lemma}\label{lemma:proj_continuity2}
Take $(w_1,w_2)\in \Ncal_0$ and $(w_{1,n},w_{2,n})\in \widetilde \Ncal_0$ such that
\[
(w_{1,n},w_{2,n})\to (w_1,w_2) \qquad \text{ in } X,\text{ as } n\to \infty.
\]
Let $(t_n,s_n)$ and $(t_0,s_0)$ be the unique pairs of positive components such that
\[
(t_n^\lambda w_{1,n}^+-s_n^\lambda w_{1,n}^-,t_n^\mu w_{2,n}^+-s_n^\mu w_{2,n}^-),(t_0^\lambda w_{1}^+-s_0^\lambda w_{1}^-,t_0^\mu w_{2}^+-s_0^\mu w_{2}^-)\in \Ncal_{\rm nod}
\] 
Then 
\[
(t_n,s_n)\to (t_0,s_0)\qquad \text{ as } n\to \infty.
\]
\end{lemma}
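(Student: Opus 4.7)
The plan is to mirror the argument of Lemma \ref{lemma:proj_continuity1}, but with the perturbation parameter $\varepsilon$ replaced by the running index $n$. Since $(w_{1,n},w_{2,n})\in \widetilde\Ncal_0\subset \Ncal_0$, Proposition \ref{prop:unique_maximum} guarantees that $(t_n,s_n)$ is the unique positive solution of \eqref{eq:system_t_s} with the coefficients $A^\pm_n,B^\pm_n,C_{1,n},C_{2,n}$ built from $(w_{1,n},w_{2,n})$ according to \eqref{eq:Amaismenos} and \eqref{eq:Bmaismenos}. The first thing to check is that strong convergence $(w_{1,n},w_{2,n})\to (w_1,w_2)$ in $X$ yields $A^\pm_n\to A^\pm$, $B^\pm_n\to B^\pm$, $C_{1,n}\to C_1$, $C_{2,n}\to C_2$. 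This follows from the continuity of $u\mapsto u^\pm$ on the weighted $L^r$--spaces, from \eqref{eq:estimate_between_L^r}, and from the continuity of $K:L^{(q+1)/q}(\Omega)\to L^{p+1}(\Omega)$ (and its twin with $p,q$ swapped) provided by standard elliptic estimates together with the compact embedding $W^{2,(q+1)/q}(\Omega)\hookrightarrow L^{p+1}(\Omega)$.

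Next I would establish a uniform lower bound $t_n,s_n\geq a>0$. Reading only the first positive term on the right-hand side of \eqref{eq:system_t_s}, we obtain $t_n^{2-\gamma}\geq \gamma A^+_n/(2B^+_n)$, and since $\gamma<2$ and $A^+_n\to A^+>0$, $B^+_n\to B^+>0$, this gives the desired uniform lower bound on $t_n$, and analogously on $s_n$.

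The crucial step is an upper bound. Arguing by contradiction and passing to a subsequence, assume $\max(t_n,s_n)\to\infty$. Case 1: exactly one of the two stays bounded; then passing to the limit in the corresponding equation of \eqref{eq:system_t_s}—using that terms like $t_n^{\gamma-2}\to 0$ and the remaining ratios $(s_n/t_n)^\lambda,(s_n/t_n)^\mu$ tend to $0$—forces either $2B^+=0$ or $2B^-=0$, contradicting $w\in \Ncal_0$. Case 2: $t_n,s_n\to\infty$ simultaneously. If $t_n/s_n\to 0$ or $+\infty$, the same kind of limit yields again $2B^+=0$ or $2B^-=0$. Otherwise $t_n/s_n\to l\in(0,\infty)$, and the limit of \eqref{eq:system_t_s} gives
\[
2B^+=\lambda C_1 (1/l)^\mu+\mu C_2 (1/l)^\lambda,\qquad 2B^-=\mu C_1 l^\lambda+\lambda C_2 l^\mu.
\]
If $l\leq 1$ then $l^\lambda,l^\mu\leq 1$ and hence $2B^-\leq \mu C_1+\lambda C_2$; if $l>1$ then $(1/l)^\lambda,(1/l)^\mu<1$ and hence $2B^+<\lambda C_1+\mu C_2$. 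Either way one contradicts the strict inequalities defining $\Ncal_0$. Thus $(t_n,s_n)$ is bounded.

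To conclude, extract a subsequence with $(t_n,s_n)\to (\bar t,\bar s)$ with $\bar t,\bar s\geq a>0$. Passing to the limit in \eqref{eq:system_t_s} for the sequence, the pair $(\bar t,\bar s)$ satisfies the same system with the limiting coefficients, i.e.\ the system characterizing the projection of $(w_1,w_2)\in \Ncal_0$ onto $\Ncal_{\rm nod}$. The uniqueness statement in Proposition \ref{prop:unique_maximum} forces $(\bar t,\bar s)=(t_0,s_0)$, and since every subsequential limit is $(t_0,s_0)$, the whole sequence converges. The main delicate point is the case analysis ruling out unbounded behaviour, where one has to exploit that $\Ncal_0$ is defined by \emph{strict} inequalities so that the limiting identities above are strictly inconsistent with $w\in \Ncal_0$.
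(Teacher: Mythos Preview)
Your proposal is correct and follows essentially the same route as the paper: the paper's proof simply records the convergence $A_n^\pm\to A^\pm$, $B_n^\pm\to B^\pm$, $C_{i,n}\to C_i$ from the strong convergence in $X$ and the continuity of $K$, and then defers entirely to the argument of Lemma \ref{lemma:proj_continuity1} together with the uniqueness in Proposition \ref{prop:unique_maximum}. You have in fact spelled out that case analysis in more detail than the paper does.
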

\begin{proof}
We have
\[
\begin{cases}
2B_n^+=\gamma A_n^+ t_n^{\gamma-2}+\lambda C_{1,n} (s_n/t_n)^\mu + \mu C_{2,n} (s_n/t_n)^\lambda, \\
2B_n^-=\gamma A_n^- s_n^{\gamma-2}+\mu C_{1,n} (t_n/s_n)^{\lambda} + \lambda C_{2,n} (t_n/s_n)^{\mu}.
\end{cases}
\]
From the strong convergence in $X$ and the continuity of $K$, we deduce that
\begin{multline}
A_n^\pm:= \frac{p}{p+1}\int_\Omega |w_{1,n}^\pm|^\frac{p+1}{p}|x|^{-\frac{\alpha}{p}}\, dx +\frac{q}{q+1}\int_\Omega |w_{2,n}^\pm|^\frac{q+1}{q}|x|^{-\frac{\beta}{\alpha}}\, dx\\
		\to \frac{p}{p+1}\int_\Omega |w_1^\pm|^\frac{p+1}{p} |x|^{-\frac{\alpha}{p}} \, dx +\frac{q}{q+1}\int_\Omega |w_2^\pm|^\frac{q+1}{q} |x|^{-\frac{\beta}{\alpha}}\, dx=:A^\pm>0,
\end{multline}
(recall that $w_i^\pm \not \equiv 0$, $i=1,2$ whenever $w\in \Ncal_0$),
\[
B_n^\pm:= \int_\Omega w_{1,n}^\pm K w_{2,n}^\pm\, dx\to \int_\Omega w_{1,n}^\pm K w_{2,n}^\pm\, dx =: B^\pm>0
\]
and
\[
C_{1,n}:=\int_\Omega w_{1,n}^+ K w_{2,n}^-\, dx\to \int_\Omega w_{1}^+ K w_{2}^-\, dx=:C_1>0, 
\]
\[
C_{2,n}:=\int_\Omega w_{2,n}^+ K w_{1,n}^-\, dx\to \int_\Omega w_{2}^+ K w_{1}^-\, dx=:C_2>0.
\]
Using Proposition \ref{prop:unique_maximum} and arguing exactly as in the proof of Lemma \ref{lemma:proj_continuity1}, we can infer that $(t_n,s_n)_{n}$ is a bounded sequence which actually converges to $(t_0,s_0)$.
\end{proof}

We can now turn to the convergence of the critical level which implies that the extension of the map $\R^{+}\owns\eps\mapsto\tilde c_{\rm nod}^\eps$ by $\tilde c_{\rm nod}^0=\tilde c_{\rm nod}$ is right-continuous at zero. 
 
\begin{proposition}\label{prop:cont-level}
We have $\tilde c_{\rm nod}^\eps\to \tilde c_{\rm nod}$, as $\eps\to 0$.
\end{proposition}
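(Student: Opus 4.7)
The plan is to establish the two inequalities
\[
\tilde c_{\rm nod} \leq \liminf_{\eps\to 0}\tilde c_{\rm nod}^\eps \qquad \text{and} \qquad \limsup_{\eps\to 0}\tilde c_{\rm nod}^\eps \leq \tilde c_{\rm nod}
\]
separately. The key ingredients will be the trivial pointwise inequality $I(w)\leq I_\eps(w)$ for $w \in \widetilde X$ (the extra gradient terms in $I_\eps$ being nonnegative), the two continuity results for the projections onto $\Ncal_{\rm nod}$ and $\Ncal_{\rm nod}^\eps$ proved in Lemmas \ref{lemma:proj_continuity1} and \ref{lemma:proj_continuity2}, and the density of $\widetilde X$ in $X$.

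The lower bound is the easy direction. Given a minimizer $w_\eps\in \Ncal_{\rm nod}^\eps$ provided by Proposition \ref{prop:levelsequiv_eps}, the inclusion $\Ncal_{\rm nod}^\eps \subset \widetilde \Ncal_0 \subset \Ncal_0$ lets me apply Proposition \ref{prop:unique_maximum} to obtain the unique positive pair $(t_\eps, s_\eps)$ such that
\[
\widetilde w_\eps := (t_\eps^\lambda w_{1,\eps}^+ - s_\eps^\lambda w_{1,\eps}^-,\, t_\eps^\mu w_{2,\eps}^+ - s_\eps^\mu w_{2,\eps}^-) \in \Ncal_{\rm nod}.
\]
I then chain the inequalities
\[
\tilde c_{\rm nod} \leq I(\widetilde w_\eps) \leq I_\eps(\widetilde w_\eps) \leq I_\eps(w_\eps) = \tilde c_{\rm nod}^\eps,
\]
where the last inequality invokes Lemma \ref{lemma:uniquemax_eps}, telling us that $(1,1)$ is the unique global maximum of $(t,s)\mapsto I_\eps(t^\lambda w_{1,\eps}^+ - s^\lambda w_{1,\eps}^-, t^\mu w_{2,\eps}^+ - s^\mu w_{2,\eps}^-)$. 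Passing to $\liminf$ on the right immediately yields the lower bound.

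The upper bound is the harder direction and requires a diagonal-style argument. Given $\delta > 0$, I use the variational characterization \eqref{eq:goal_easyequality} to pick $w \in \Ncal_0$ with $\sup_{t,s>0} I(t^\lambda w_1^+ - s^\lambda w_1^-, t^\mu w_2^+ - s^\mu w_2^-) < \tilde c_{\rm nod} + \delta$; the catch is that $w$ need not lie in $\widetilde X$, so I cannot project it directly onto $\Ncal_{\rm nod}^\eps$. I overcome this by first approximating $w$ strongly in $X$ by a sequence $w_n \in \widetilde X$; since $\Ncal_0$ is cut out by strict inequalities and $K$ is continuous, $w_n\in \widetilde\Ncal_0$ for $n$ large. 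Lemma \ref{lemma:proj_continuity2} then produces convergent projections $(t_n,s_n)\to(t_0,s_0)$ on $\Ncal_{\rm nod}$, and continuity of $I$ on $X$ lets me freeze some $n$ for which
\[
I(t_n^\lambda w_{1,n}^+ - s_n^\lambda w_{1,n}^-, t_n^\mu w_{2,n}^+ - s_n^\mu w_{2,n}^-) < \tilde c_{\rm nod} + 2\delta.
\]
For this fixed $w_n$, Lemma \ref{lemma:proj_continuity1} provides, as $\eps \to 0$, projections $(t_{n,\eps}, s_{n,\eps})$ onto $\Ncal_{\rm nod}^\eps$ converging to $(t_n, s_n)$. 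Plugging them into $I_\eps$, the non-$\eps$ part converges to $I(t_n^\lambda w_{1,n}^+ - \ldots)$ while the $\eps$-gradient penalty vanishes (since $w_n$ is fixed and $(t_{n,\eps}, s_{n,\eps})$ stays bounded). As the projected pair lies in $\Ncal_{\rm nod}^\eps$, this yields $\limsup_{\eps\to 0}\tilde c_{\rm nod}^\eps \leq \tilde c_{\rm nod} + 2\delta$, and sending $\delta \to 0$ finishes the job.

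The main obstacle I foresee is precisely the order of limits in the upper bound: one must freeze $n$ (thereby fixing the gradient norms of $w_n$) \emph{before} sending $\eps \to 0$, so that the $\eps$-gradient terms can be made to vanish. This freezing is made possible by coupling the continuity of the $\Ncal_{\rm nod}$-projection (Lemma \ref{lemma:proj_continuity2}) with the density of $\widetilde X$ in $X$, and only afterwards is $\eps$ sent to $0$ via Lemma \ref{lemma:proj_continuity1}.
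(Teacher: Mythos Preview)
Your proposal is correct and follows essentially the same route as the paper. The lower bound argument is identical to the paper's Step 2, and your upper bound argument is the paper's Step 1 reorganized as a $\delta$-argument (you pick a near-optimal $w\in\Ncal_0$ first, whereas the paper proves $\limsup_{\eps\to 0}\tilde c_{\rm nod}^\eps\leq \sup_{t,s>0}I(t^\lambda w_1^+-s^\lambda w_1^-,t^\mu w_2^+-s^\mu w_2^-)$ for arbitrary $w\in\Ncal_0$ and then takes the infimum); the sequence of ingredients---density of $\widetilde X$ in $X$, openness of $\Ncal_0$, Lemma~\ref{lemma:proj_continuity2} to control the $\Ncal_{\rm nod}$-projection along $w_n\to w$, then freezing $n$ and invoking Lemma~\ref{lemma:proj_continuity1} to send $\eps\to 0$---is exactly the paper's.
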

\begin{proof}

We deal successively with the upper and lower-semicontinuity. 

\medbreak

{\it Step 1 }-- Upper semi-continuity. 

\medbreak

Fix $w\in \Ncal_0$. Since $\widetilde X$ is dense in $X$ and $\Ncal_0$ is open, there exists $(w_n)_{n}\subset \widetilde \Ncal_0$ such that $w_n\to w$ strongly in $X$. Given $\eps>0$ and $n\in \N$, according to Lemma \ref{lemma:uniquemax_eps}, there exist unique $t_{n,\eps},s_{n,\eps}>0$ such that
\[
(t_{n,\eps}^\lambda w_{1,n}^+-s_{n,\eps}^\lambda w_{1,n}^-,t_{n,\eps}^\mu w_{2,n}^+-s_{n,\eps}^\mu w_{2,n}^-)\in \Ncal_{\rm nod}^\eps.
\]
Therefore, we have
\begin{align}\label{eq:upperbound_c_eps}
\tilde c^\eps_{\rm nod} \leq& I_\eps(t_{n,\eps}^\lambda w_{1,n}^+-s_{n,\eps}^\lambda w_{1,n}^-,t_{n,\eps}^\mu w_{2,n}^+-s_{n,\eps}^\mu w_{2,n}^-)\\
				=& I(t_{n,\eps}^\lambda w_{1,n}^+-s_{n,\eps}^\lambda w_{1,n}^-,t_{n,\eps}^\mu w_{2,n}^+-s_{n,\eps}^\mu w_{2,n}^-)\\
					&+\eps t_{n,\eps}^\gamma \left(\frac{p}{p+1} \int_\Omega |\nabla w_{1,n}^+|^\frac{p+1}{p}+\frac{q}{q+1}\int_\Omega |\nabla w_{2,n}^+|^\frac{q+1}{q}\right)\\
					&+\eps s_{n,\eps}^\gamma \left(\frac{p}{p+1} \int_\Omega |\nabla w_{1,n}^-|^\frac{p+1}{p}+\frac{q}{q+1}\int_\Omega |\nabla w_{2,n}^-|^\frac{q+1}{q}\right).
\end{align}
Now observe that by Lemma \ref{lemma:proj_continuity1}, for each fixed $n\in \N$, we have
\[
(t_{n,\eps},s_{n,\eps})\to (t_{n,0},s_{n,0}) \qquad \text{ as } \eps\to 0,
\]
where $t_{n,0},s_{n,0}>0$ is the unique pair of positive components such that
\[
(t_{n,0}^\lambda w_{1,n}^+-s_{n,0}^\lambda w_{1,n}^-,t_{n,0}^\mu w_{2,n}^+-s_{n,0}^\mu w_{2,n}^-)\in \Ncal_{\rm nod}.
\]
Taking the limit in \eqref{eq:upperbound_c_eps} as $\eps\to 0$, we obtain
\[
\limsup_{\eps\to 0} \tilde c^\eps_{\rm nod}\leq I(t_{n,0}^\lambda w_{1,n}^+-s_{n,0}^\lambda w_{1,n}^-,t_{n,0}^\mu w_{2,n}^+-s_{n,0}^\mu w_{2,n}^-).
\]
On the other hand, by Lemma \ref{lemma:proj_continuity2}, we have
\[
(t_{n,0},s_{n,0})\to (t_{0},s_{0}) \qquad \text{ as } n \to \infty,
\]
where $t_{0},s_{0}>0$ is the unique pair of positive components such that
\[
(t_{0}^\lambda w_{1}^+-s_{0}^\lambda w_{1}^-,t_{0}^\mu w_{2}^+-s_{0}^\mu w_{2}^-)\in \Ncal_{\rm nod}.
\]
Hence we deduce that
\begin{align*}
\limsup_{\eps\to 0} \tilde c_{\rm nod}^\eps &\leq I(t_{0}^\lambda w_{1}^+-s_{0}^\lambda w_{1}^-,t_{0}^\mu w_{2}^+-s_{0}^\mu w_{2}^-)\\
								&=\sup_{t,s>0} I(t^\lambda w_{1}^+-s^\lambda w_{1}^-,t^\mu w_{2}^+-s^\mu w_{2}^-).
\end{align*}
Since this holds for every $w\in \Ncal_0$, \eqref{eq:goal_easyequality} implies
\[
\limsup_{\eps\to 0} \tilde c_{\rm nod}^\eps \leq \inf_{w\in \Ncal_0} \sup_{t,s>0} I(t^\lambda w_{1}^+-s^\lambda w_{1}^-,t^\mu w_{2}^+-s^\mu w_{2}^-)=\tilde c_{\rm nod}.
\]

\medbreak

{\it Step 2 }-- Lower semi-continuity. 

\medbreak Take $w_\eps\in \Ncal_{\rm nod}^\eps$ such that $I_\eps(w_\eps)=\tilde c_{\rm nod}^\eps$ and $I_\eps'(w_\eps)=0$. Since $\Ncal_{\rm nod}^\eps\subset \widetilde \Ncal_0\subset \Ncal_0$, there exists unique $(t_\eps,s_\eps)\in \R^+\times \R^+$ such that
\[
(t_\eps^\lambda w_{1,\eps}^+-s_\eps^\lambda w_{1,\eps}^-,t_\eps^\mu w_{2,\eps}^+-s_\eps^\mu w_{2,\eps}^-)\in \Ncal_{\rm nod}
\]
Therefore, we have
\begin{align}\label{eq:upperbound_c_nod}
\tilde c_{\rm nod}\leq & \,  I(t_\eps^\lambda w_{1,\eps}^+-s_\eps^\lambda w_{2,\eps}^-, t_\eps^\mu w_{2,\eps}^+-s_{\eps}^\mu w_{2,\eps}^-)\\
			\leq & \, I(t_\eps^\lambda w_{1,\eps}^+-s_\eps^\lambda w_{2,\eps}^-, t_\eps^\mu w_{2,\eps}^+-s_{\eps}^\mu w_{2,\eps}^-)\\
			& \, + \eps t_\eps^\gamma \left( \frac{p}{p+1}\int_\Omega |\nabla w_{1,\eps}^+|^\frac{p+1}{p} + \frac{q}{q+1}\int_\Omega |\nabla w_{2,\eps}^+|^\frac{q+1}{q}  \right)\\
			& \, + \eps s_\eps^\gamma \left( \frac{p}{p+1}\int_\Omega |\nabla w_{1,\eps}^-|^\frac{p+1}{p} + \frac{q}{q+1}\int_\Omega |\nabla w_{2,\eps}^-|^\frac{q+1}{q}  \right)\\
			=& \, I_\eps(t_\eps^\lambda w_{1,\eps}^+-s_\eps^\lambda w_{2,\eps}^-, t_\eps^\mu w_{2,\eps}^+-s_{\eps}^\mu w_{2,\eps}^-)\\
			\leq & \, \sup_{t,s>0} I_\eps (t^\lambda w_{1,\eps}^+-s^\lambda w_{2,\eps}^-, t^\mu w_{2,\eps}^+-s^\mu w_{2,\eps}^-) = I_\eps(w_\eps)=\tilde c_{\rm nod}^\eps.\qedhere
\end{align}
\end{proof}

Consider now the family of approximating minimizers
$$W_{\eps}:=\{(w_\eps,\eps)\in \Ncal^\eps_{\rm nod}\times \R^{+}\mid I_{\varepsilon}(w_{1,\eps},w_{2,\eps})=\tilde c_{\rm nod}^\eps\}.$$

Our subsequent step is to prove that, given a sequence $(\eps_{n})_{n}$ converging to zero and a sequence $(w_{\eps_{n}})_n$ such that $(w_{\eps_{n}},\eps_{n})\in W_{\eps_{n}}$,  $(w_{\eps_{n}})_{n}$ converges strongly in $X$ and achieves $\tilde c_{\rm nod}$. 
 
Arguing exactly as in the proof of Proposition \ref{prop:levelsequiv_eps}, we infer that
\begin{multline}\label{eq:equivalent_expression_I_eps(w_eps)}
I_\eps(w_\eps)=\frac{(pq-1)p}{(p+1)(2pq+p+q)} \left(\int_\Omega  |w_{1,\eps}|^\frac{p+1}{p}|x|^{-\frac{\alpha}{p}}\, dx+ \eps\int_\Omega |\nabla w_{1,\eps}|^\frac{p+1}{p}\right)\\
				 +\frac{(pq-1)q}{(q+1)(2pq+p+q)}  \left(\int_\Omega  |w_{2,\eps}|^\frac{1+1}{1}|x|^{-\frac{\beta}{q}}\, dx + \eps\int_\Omega |\nabla w_{2,\eps}|^\frac{q+1}{q}\right).
\end{multline}
Combining this identity with Proposition \ref{prop:cont-level}, we deduce the existence of $C>0$ such that
\begin{equation}\label{eq:upperbounds_eps}
\sup_{\eps\in(0,1]}\left(\int_\Omega  |w_{1,\eps}|^\frac{p+1}{p}|x|^{-\frac{\alpha}{p}}\, dx, \int_\Omega  |w_{2,\eps}|^\frac{q+1}{q}|x|^{-\frac{\beta}{q}}\right)\leq C.
\end{equation}
Moreover, arguing as in Step 2 of Proposition \ref{prop:levelsequiv_eps}, we deduce that  
\begin{equation}\label{eq:lowerbounds_eps}
\inf_{\eps\in(0,1]}\left(\int_\Omega | w_{1,\eps}^\pm|^{\frac{p+1}{p}}|x|^{-\frac{\alpha}{p}}\, dx,\int_\Omega | w_{2,\eps}^\pm|^{\frac{q+1}{q}}|x|^{-\frac{\beta}{q}}\, dx\right) \geq \bar\delta>0,
\end{equation}
together with the lower estimates
\begin{equation}\label{eq:uniform_eps_in_N_01}
\lambda \int_{\Omega}w_{1,\eps}^{+}Kw_{2,\eps}\, dx+\mu \int_{\Omega}w_{2,\eps}^{+}Kw_{1,\eps} \, dx\geq 2\bar \delta,
\end{equation}
\begin{equation}\label{eq:uniform_eps_in_N_02}
-\lambda \int_{\Omega}w_{1,\eps}^{-}Kw_{2,\eps}\, dx-\mu \int_{\Omega}w_{2,\eps}^{-}Kw_{1,\eps}\, dx\geq 2\bar \delta,
\end{equation}
which hold for every $\eps\in(0,1]$.

Next we prove that the gradient terms disappear when taking the limit in $I'_{\eps}(w_{\eps})$ as $\eps\to 0$. 
\begin{proposition}\label{eq:convergence_of_grad_as_epsto0}
Let $(w_\eps,\eps)\in W_{\eps}$. We have 
\[
\max\left(\eps \int_\Omega |\nabla w_{1,\eps}|^\frac{p+1}{p},\quad \eps \int_\Omega |\nabla w_{2,\eps}|^\frac{q+1}{q}\right)\to 0,\qquad \text{ as } \eps\to 0.
\]
\end{proposition}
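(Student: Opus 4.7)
The plan is to project each $w_{\eps}$ back onto the limit Nehari set $\Ncal_{\rm nod}$ and exploit two competing maximization principles. By Lemma \ref{lemma:uniquemax_eps}, $(1,1)$ is the global maximum of the $I_{\eps}$-fiber map along $w_{\eps}$, while Proposition \ref{prop:unique_maximum} applied to $w_\eps \in \Ncal_{\rm nod}^\eps \subset \Ncal_0$ produces a unique pair $(t_{\eps}, s_{\eps}) \in (0, \infty)^2$ such that
$$\tilde w_{\eps} := (t_{\eps}^{\lambda} w_{1,\eps}^+ - s_{\eps}^{\lambda} w_{1,\eps}^-,\ t_{\eps}^{\mu} w_{2,\eps}^+ - s_{\eps}^{\mu} w_{2,\eps}^-) \in \Ncal_{\rm nod},$$
which therefore satisfies $I(\tilde w_\eps) \geq \tilde c_{\rm nod}$.

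The key algebraic observation, exploiting the disjointness of the supports of $w_{i,\eps}^{\pm}$ together with the identities $\lambda (p+1)/p = \mu (q+1)/q = \gamma$, is the exact decomposition
$$I_{\eps}(\tilde w_{\eps}) = I(\tilde w_{\eps}) + \eps\, t_{\eps}^{\gamma}\, P_{\eps}^+ + \eps\, s_{\eps}^{\gamma}\, P_{\eps}^-,$$
where
$$P_{\eps}^{\pm} := \frac{p}{p+1} \int_{\Omega} |\nabla w_{1,\eps}^{\pm}|^{(p+1)/p}\, dx + \frac{q}{q+1} \int_{\Omega} |\nabla w_{2,\eps}^{\pm}|^{(q+1)/q}\, dx \geq 0.$$
Combining this with $I_{\eps}(\tilde w_{\eps}) \leq I_{\eps}(w_{\eps}) = \tilde c_{\rm nod}^{\eps}$ (which follows from the maximality of $(1,1)$), with $I(\tilde w_{\eps}) \geq \tilde c_{\rm nod}$, and with Proposition \ref{prop:cont-level}, we immediately obtain
$$0 \leq \eps\, t_{\eps}^{\gamma}\, P_{\eps}^+ + \eps\, s_{\eps}^{\gamma}\, P_{\eps}^- \leq \tilde c_{\rm nod}^{\eps} - \tilde c_{\rm nod} \longrightarrow 0.$$

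The remaining task, which is the crux of the argument, is to show $\liminf_{\eps \to 0^+} \min(t_{\eps}, s_{\eps}) > 0$. Indeed, once this is known, the previous bound forces $\eps P_{\eps}^{\pm} \to 0$ and the proposition follows at once, since by disjointness of the supports $\eps \int_{\Omega} |\nabla w_{i,\eps}|^{r_i}\, dx = \eps \int_{\Omega} |\nabla w_{i,\eps}^+|^{r_i}\, dx + \eps \int_{\Omega} |\nabla w_{i,\eps}^-|^{r_i}\, dx$ for the relevant exponents $r_i \in \{(p+1)/p,(q+1)/q\}$. To obtain the uniform lower bound, note that the pair $(t_{\eps}, s_{\eps})$ solves the system \eqref{eq:system_t_s} in which $A^{\pm}, B^{\pm}, C_1, C_2$ are computed from $w_{\eps}$. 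By \eqref{eq:upperbounds_eps}, \eqref{eq:estimate_between_L^r}, and \eqref{eq:maj_for_Tw}, the quantities $B_{\eps}^{\pm}, C_{1,\eps}, C_{2,\eps}$ are uniformly bounded above, while \eqref{eq:lowerbounds_eps} gives $A_{\eps}^{\pm} \geq c > 0$ uniformly in $\eps$. Dropping the nonnegative cross terms in the first equation of \eqref{eq:system_t_s} yields
$$t_{\eps}^{2-\gamma} \geq \frac{\gamma\, A_{\eps}^+}{2\, B_{\eps}^+} \geq c' > 0,$$
and since $2 - \gamma > 0$, this furnishes the desired lower bound on $t_{\eps}$; the analogous argument applied to the second equation of \eqref{eq:system_t_s} handles $s_{\eps}$, completing the proof.
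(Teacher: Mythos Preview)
Your argument is correct and follows essentially the same route as the paper: both project $w_\eps$ onto $\Ncal_{\rm nod}$ via $(t_\eps,s_\eps)$, use the exact splitting $I_\eps(\tilde w_\eps)=I(\tilde w_\eps)+\eps t_\eps^\gamma P_\eps^+ + \eps s_\eps^\gamma P_\eps^-$, sandwich this between $\tilde c_{\rm nod}$ and $\tilde c_{\rm nod}^\eps$ (exactly the chain \eqref{eq:upperbound_c_nod}), and invoke Proposition~\ref{prop:cont-level}. The only minor difference is in the justification that $t_\eps,s_\eps$ stay bounded away from zero: the paper passes to weak limits of $w_{i,\eps}^\pm$ and shows $B_\eps^\pm$ converge to positive quantities, whereas you argue more directly that $B_\eps^\pm$ are uniformly bounded above via \eqref{eq:maj_for_Tw} and \eqref{eq:upperbounds_eps}, which combined with the lower bound \eqref{eq:lowerbounds_eps} on $A_\eps^\pm$ and the inequality $2t_\eps^{2-\gamma}B_\eps^+\geq \gamma A_\eps^+$ suffices. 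Your shortcut is legitimate and in fact slightly cleaner for the purposes of this proposition.
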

\begin{proof}
From the inequalities in \eqref{eq:upperbound_c_nod}, we actually deduce that
\[
\lim_{\eps\to 0}I(t_\eps^\lambda w_{1,\eps}^+-s_\eps^\lambda w_{2,\eps}^-, t_\eps^\mu w_{2,\eps}^+-s_{\eps}^\mu w_{2,\eps}^-)=\tilde c_{\rm nod},
\]
\[
\lim_{\eps\to 0} \eps t_\eps^\gamma \left( \frac{p}{p+1}\int_\Omega |\nabla w_{1,\eps}^+|^\frac{p+1}{p} + \frac{q}{q+1}\int_\Omega |\nabla w_{2,\eps}^+|^\frac{q+1}{q}\right)=0,
\]
and
\[
\lim_{\eps\to 0} \eps s_\eps^\gamma \left( \frac{p}{p+1}\int_\Omega |\nabla w_{1,\eps}^-|^\frac{p+1}{p} + \frac{q}{q+1}\int_\Omega |\nabla w_{2,\eps}^-|^\frac{q+1}{q} \right) =0.
\]
The conclusion is an obvious consequence of the following claim.
\medbreak

{\it Claim} --
$t_\eps,s_\eps\not \to 0$. 
We argue as in the proof of Lemmas \ref{lemma:proj_continuity1} and \ref{lemma:proj_continuity2}, though we proceed with extra care since we have no information yet about the convergence of $(w_\eps)$ in $X$. The pair $(t_\eps,s_\eps)$ satisfies 
\begin{equation}\label{eq:system_teps_s_eps}
2t_\eps^{2-\gamma}B_\eps^+\geq \gamma A_\eps^+\qquad 2 s_\eps^{2-\gamma }B_\eps^-\geq \gamma A_\eps^- 
\end{equation}
with
\[
A_\eps^\pm:= \frac{p}{p+1}\int_\Omega |w_{1,\eps}^\pm|^\frac{p+1}{p}|x|^{-\frac{\alpha}{p}}\, dx +\frac{q}{q+1}\int_\Omega |w_{2,\eps}^\pm|^\frac{q+1}{q}|x|^{-\frac{\beta}{\alpha}}\, dx
\]
and
\[
B_\eps^\pm:= \int_\Omega w_{1,\eps}^\pm K w_{2,\eps}^\pm\, dx.
\]
From \eqref{eq:upperbounds_eps}, we deduce the existence of $w, (f_+,g_+), (f_-,g_-)  \in X$ such that, up to a subsequence, $w_\eps\stackrel{X}{\rightharpoonup} w$,
\[
w_{1,\eps}^\pm {\rightharpoonup} f_{\pm}\geq 0, \text{ weakly in } L^{\frac{p+1}{p}}(\Omega, |x|^{-\frac{\alpha}{p}})
\]
and 
\[
w_{2,\eps}^\pm \rightharpoonup g_{\pm}\geq 0, \text{ weakly in } L^{\frac{q+1}{q}}(\Omega, |x|^{-\frac{\beta}{q}}).
\]
Taking the limit as $\eps\to 0$ in \eqref{eq:uniform_eps_in_N_01}--\eqref{eq:uniform_eps_in_N_02}, we obtain
\[
\lambda \int_{\Omega}f_+\, K(g_+-g_-)\, dx+\mu \int_{\Omega}g_+\, K(f_+-f_-) \, dx\geq 2\bar \delta,
\]
\[
-\lambda \int_{\Omega}f_-\,K(g_+-g_-)\, dx-\mu \int_{\Omega}g_-K (f_+-f_-)\, dx\geq 2\bar \delta,
\]
and it is clear that $f_\pm,g_\pm\not\equiv 0$. Going back to \eqref{eq:system_teps_s_eps}, we see that the lower estimate \eqref{eq:lowerbounds_eps} yields $A_\eps^\pm \geq \tilde \delta>0$, where $\tilde \delta$ is independent of $\eps$. Since moreover
\[
B_\eps^\pm\to \int_\Omega f_{\pm}\, K g_\pm\, dx >0,
\]
we have that $t_\eps,s_\eps\not\to 0$, as claimed.
\end{proof}

\medbreak

We are now ready to conclude this section by proving its main result.

\begin{proof}[{Proof of Theorem \ref{thm:levelsequiv}}]
The key point is the next claim.

\medbreak

{\it Claim} --  up to a subsequence, $w_\eps\to w$ strongly in $X$ for some $w\in X$.

\medbreak 

We first deduce from \eqref{eq:upperbounds_eps} the existence of $w\in X$, $g_1\in L^{p+1}(\Omega,|x|^{-\frac{\alpha}{p}})$, $g_2\in L^{q+1}(\Omega,|x|^{-\frac{\beta}{q}})$ such that $w_\eps \rightharpoonup w$ weakly in $X$, 
\[
|w_{1,\eps}|^{\frac{1}{p}-1}w_{1,\eps} \rightharpoonup g_1 \text{ weakly in } L^{p+1}(\Omega, |x|^{-\frac{\alpha}{p}})	
\]
and
\[ |w_{2,\eps}|^{\frac{1}{q}-1}w_{2,\eps} \rightharpoonup g_2 \text{ weakly in } L^{q+1}(\Omega, |x|^{-\frac{\beta}{q}}).
\]
In particular, taking $\eps\to 0$ in the approximating system \eqref{eq:perturbed_system_eps} and using Proposition \ref{eq:convergence_of_grad_as_epsto0}, we conclude that
\begin{equation}\label{eq:alternative_system}
g_1 |x|^{-\frac{\alpha}{p}}=K w_2,\qquad g_2 |x|^{-\frac{\beta}{q}}=K w_1.
\end{equation}
This implies in particular that
\[
g_1 |x|^{-\frac{\alpha}{p}}\in W^{2,\frac{q+1}{q}}(\Omega)\cap W_0^{1,\frac{q+1}{q}}(\Omega)\subset L^{p+1} (\Omega)
\]
and
\[
 g_2 |x|^{-\frac{\beta}{q}}\in W^{2,\frac{p+1}{q}}(\Omega)\cap W_0^{1,\frac{p+1}{q}}(\Omega)\subset L^{q+1}(\Omega).
\]
Writing $g_1=|f_1|^{\frac{1}{p}-1}f_1$, with $f_1=|g_1|^{p-1}g_1$, we observe that $f_1\in L^\frac{p+1}{p}(\Omega,|x|^{-\frac{\alpha}{q}})$ because
\[
\int_\Omega |f_1|^\frac{p+1}{p} |x|^{-\frac{\alpha}{p}}\, dx = \int_\Omega |g_1|^{p+1} |x|^{-\frac{\alpha}{p}}\, dx <\infty.
\]
Now take $\delta>0$ and fix $h_\delta\in W^{1,\frac{p+1}{p}}(\Omega)\cap L^\frac{p+1}{p}(\Omega,|x|^{-\frac{\alpha}{p}})$ such that 
\begin{equation}\label{eq:approximation_delta}
\|f_1-h_\delta\|_{\frac{p+1}{p},\frac{\alpha}{p}}<\delta.
\end{equation}
Using the test function $w_{1,\eps}-f_1$ in the equation 
\[
-\eps \text{div}(|\nabla w_{1,\eps}|^{\frac{1}{p}-1}\nabla w_{1,\eps})+ (|w_{1,\eps}|^{\frac{1}{p}-1}w_{1,\eps}-|f_1|^{\frac{1}{p}-1}f_1)|x|^{-\frac{\alpha}{p}}=Kw_{2,\eps}-Kw_2,
\]
we deduce that
\begin{multline*}
\int_\Omega (|w_{1,\eps}|^{\frac{1}{p}-1}w_{1,\eps}-|f_1|^{\frac{1}{p}-1}f_1) (w_{1,\eps}-h_\delta)|x|^{-\frac{\alpha}{p}} \, dx\\
=-\eps\int_\Omega |\nabla w_{1,\eps}|^{\frac{1}{p}-1}\nabla w_{1,\eps}\cdot \nabla(w_{1,\eps}-h_\delta)\, dx+\int_\Omega (Kw_{2,\eps}-Kw_2) (w_{1,\eps}-h_\delta)\, dx.
\end{multline*}
Proposition \ref{eq:convergence_of_grad_as_epsto0}, the upper bounds \eqref{eq:upperbounds_eps} and the compactness of the operator $K$ now imply that the right hand side of the last equation converges to 0 as $\eps\to 0$. Consequently, we can take $\bar \eps>0$ such that
\begin{equation}\label{eq:lasteq}
\left| \int_\Omega (|w_{1,\eps}|^{\frac{1}{p}-1}w_{1,\eps}-|f_1|^{\frac{1}{p}-1}f_1) (w_{1,\eps}-h_\delta)|x|^{-\frac{\alpha}{p}} \, dx \right| <\delta, \text{ for every }\eps<\bar \eps.
\end{equation}
Combining \eqref{eq:upperbounds_eps}, \eqref{eq:approximation_delta} and  \eqref{eq:lasteq}, we infer that
\begin{multline*}
\left| \int_\Omega (|w_{1,\eps}|^{\frac{1}{p}-1}w_{1,\eps}-|f_1|^{\frac{1}{p}-1}f_1) (w_{1,\eps}-f_1)|x|^{-\frac{\alpha}{p}} \, dx \right|\\
\leq \left| \int_\Omega (|w_{1,\eps}|^{\frac{1}{p}-1}w_{1,\eps}-|f_1|^{\frac{1}{p}-1}f_1) (w_{1,\eps}-h_\delta)|x|^{-\frac{\alpha}{p}} \, dx \right| \\
+\left| \int_\Omega (|w_{1,\eps}|^{\frac{1}{p}-1}w_{1,\eps}-|f_1|^{\frac{1}{p}-1}f_1) (h_\delta-f_1)|x|^{-\frac{\alpha}{p}} \, dx \right| \\ 
\leq \delta + \|h_\delta-f_1\|_{\frac{p+1}{p},\frac{\alpha}{p}} \| |w_{1,\eps}|^{\frac{1}{p}-1}w_{1,\eps}-|f_1|^{\frac{1}{p}-1}f_1 \|_{p+1,\frac{\alpha}{p}}\leq \delta+C \delta,
\end{multline*}
for some $C>0$ (independent of $\eps$ and $\delta$). We therefore conclude that
\begin{equation}\label{eq:happy_end}
\lim_{\eps \to 0} \int_\Omega (|w_{1,\eps}|^{\frac{1}{p}-1}w_{1,\eps}-|f_1|^{\frac{1}{p}-1}f_1) (w_{1,\eps}-f_1)|x|^{-\frac{\alpha}{p}} \, dx =0.
 \end{equation} 
Using the classical pointwise estimate 
\[
(|\xi|^{\frac{1}{p}-1}\xi-|\eta|^{\frac{1}{p}-1}\eta)\cdot (\xi-\eta)\geq 2^\frac{p-1}{p} |\xi-\eta|^\frac{p+1}{p} \qquad \text{ if } 0<p<1,
\]
see for instance \cite{Simon},
we easily deduce from \eqref{eq:happy_end} that $w_{1,\eps}\to f_1=w_1$ strongly in $L^{\frac{p+1}{p}}(\Omega, |x|^{-\frac{\alpha}{p}})$ when $p\leq1$. In the complementary case $p>1$, using the pointwise estimate 
\[
(|\xi|^{\frac{1}{p}-1}\xi-|\eta|^{\frac{1}{p}-1}\eta)\cdot (\xi-\eta)\geq \frac{1}{p}|\xi-\eta|^2(|\xi|+|\eta|)^{\frac{1}{p}-1} \qquad \text{ if } p\geq 1,
\]
see again \cite{Simon}, we observe that
\begin{multline*}
\int_\Omega |w_{1,\eps}-f_1|^\frac{p+1}{p} |x|^{-\frac{\alpha}{p}}\, dx =\int_\Omega \frac{|w_{1,\eps}-f_1|^\frac{p+1}{p}}{(|w_{1,\eps}|+|f_1|)^\frac{p^2-1}{2p^2}}(|w_{1,\eps}|+|f_1|)^\frac{p^2-1}{2p^2} |x|^{-\frac{\alpha}{p}}\, dx\\
\leq \left( \int_\Omega \frac{|w_{1,\eps}-f_1|^2}{(|w_{1,\eps}+|f_1|)^{1-\frac{1}{p}}} |x|^{-\frac{\alpha}{p}} \, dx\right)^\frac{p+1}{2p} \left(\int_\Omega (|w_{1,\eps}|+|f_1|)^\frac{p+1}{p} |x|^{-\frac{\alpha}{p}}\, dx\right)^\frac{p-1}{2p}\\
\leq C \left(\int_\Omega (|w_{1,\eps}|^{\frac{1}{p}-1}w_{1,\eps}-|f_1|^{\frac{1}{p}-1}f_1) (w_{1,\eps}-f_1)|x|^{-\frac{\alpha}{p}} \, dx\right)^\frac{p+1}{2p}
\end{multline*}
and we reach the same conclusion as for $p\le 1$. 
Obviously, the convergence of the component $w_{2,\eps}$ follows in an analogous way.

\medbreak

The previous claim guarantees that $w_\eps\to w$ with $w_1^\pm, w_2^\pm\not\equiv 0$ since \eqref{eq:lowerbounds_eps} now 
implies 
$$
\min\left(\int_\Omega | w_{1}^\pm|^{\frac{p+1}{p}}|x|^{-\frac{\alpha}{p}}\, dx,\int_\Omega | w_{2}^\pm|^{\frac{q+1}{q}}|x|^{-\frac{\beta}{q}}\, dx\right) \geq \bar\delta>0.
$$
From Propositions \ref{prop:cont-level} and \ref{eq:convergence_of_grad_as_epsto0} together with the strong convergence in $X$, we conclude that  $I(w)=\tilde c_{\rm nod}$. Moreover, the equations in \eqref{eq:alternative_system} tells that $w$ is actually a critical point of $I$ so that we have indeed proved that $\tilde c_{\rm nod}$ is achieved by a critical point of the functional $I$.

\medbreak

At last, the characterization \eqref{align:happy_end} of the critical level follows in a straightforward way as previously mentioned.
\end{proof}

\section{Least energy nodal solutions are foliated Schwarz symmetric}\label{sec:SchwarzFoliated}

Let $\Omega$ be a bounded radial domain centred at the origin, namely a ball or an annulus. The purpose of this section is to prove Theorem \ref{thm:main2} via polarization methods, in the spirit of \cite{BartschWethWillem, TavaresWeth}. First, we introduce some definitions and recall some known results. Define the sets 
$$\Hcal_0=\{H\subset \R^N:\ H \text{ is a closed half-space in $\R^N$ with $0\in \partial H$}\}$$
and, for $p\neq 0$,
$$\Hcal_0(p)=\{H\in \Hcal_0:\ p\in {\rm int}(H)\}.$$
For each $H\in \Hcal_0$ we denote the reflection in $\R^N$ with respect to the hyperplane $\partial H$ by $\sigma_H:\R^N\to \R^N$, and define the polarization of a function $u:\Omega\to \R$ with respect to $H\in \Hcal_0$ by
$$
u_H(x)=
\left\{
\begin{array}{ll}
\max\{u(x), u(\sigma_H(x))\} & x\in H\cap \Omega,\\
\min\{ u(x),u(\sigma_H(x)) \} & x\in \Omega\backslash H.
\end{array}
\right.
$$

As far as we know the link between polarization and foliated Schwarz symmetry appeared first in \cite{SmetsWillem};  cf.  \cite[Theorem 2.6]{BartschWethWillem} for further results about the foliated Schwarz symmetry of least energy solutions of some second order elliptic equations with radial data. We recall from \cite[Lemma 4.2]{Brock}, see also \cite[Proposition 2.7]{Weth}, the following equivalent characterization of foliated Schwarz symmetry which involves polarization.

\begin{proposition}\label{prop: equivalent charact for Sch symmetry}
Let $u:\Omega\to \R$ be a continuous function and take $p\in \partial B_1(0)$. Then the following statements are equivalent:
\begin{enumerate}[i)]
\item $u$ is foliated Schwarz symmetric with respect to $p$; 
\item $u_H(x)=u(x)$ $\forall \, x\in \Omega\cap H$, whenever $H\in \Hcal_0(p)$.
\end{enumerate}
\end{proposition}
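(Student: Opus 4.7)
The plan is to prove the two implications separately, relying on the explicit reflection formula $\sigma_H(x)=x-2(x\cdot\nu)\nu$, where $\nu$ is the unit inner normal to $H$.

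For $(i)\Rightarrow(ii)$, I would fix $H\in\Hcal_0(p)$ and $x\in H\cap\Omega$ and compute
\[
x\cdot p-\sigma_H(x)\cdot p=2(x\cdot\nu)(\nu\cdot p).
\]
Both factors are nonnegative: the first since $x\in H$, the second since $p\in\mathrm{int}(H)$. Hence $\sigma_H(x)$ lies on the sphere $\partial B_{|x|}(0)$ at a polar angle (with respect to $p$) at least as large as that of $x$. Because $u$ is foliated Schwarz symmetric with respect to $p$, it is non-increasing in this polar angle on each such sphere, so $u(x)\ge u(\sigma_H(x))$, which is exactly $u_H(x)=u(x)$.

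For $(ii)\Rightarrow(i)$, I would first establish strict monotonicity in the polar angle. Given $x,y\in\Omega$ with $|x|=|y|$ and $x\cdot p>y\cdot p$, let $H$ be the closed half-space whose boundary is the perpendicular bisector of the segment $[x,y]$, chosen on the side of $x$. Since $|x|=|y|$ one has $0\in\partial H$, and since $p\cdot(x-y)>0$ one has $p\in\mathrm{int}(H)$, so $H\in\Hcal_0(p)$; by construction $\sigma_H(x)=y$. The hypothesis $u_H(x)=u(x)$ then gives $u(x)\ge u(y)$. Axial symmetry with respect to $\R p$ would then be recovered by approximation: for $x,y$ on the same sphere with $x\cdot p=y\cdot p$, pick $y_n$ on that sphere with $y_n\cdot p<x\cdot p$ and $y_n\to y$; the monotonicity already proved yields $u(x)\ge u(y_n)$, and the continuity of $u$ gives $u(x)\ge u(y)$. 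The reverse inequality follows by swapping the roles of $x$ and $y$, hence $u(x)=u(y)$. Together with the monotonicity, this is precisely the definition of foliated Schwarz symmetry with respect to $p$.

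The main obstacle is exactly this last step of the converse: at configurations with $x\cdot p=y\cdot p$, the natural reflecting hyperplane contains $p$, so $p\in\partial H$ rather than $p\in\mathrm{int}(H)$, and the hypothesis on polarizations cannot be applied directly. The continuity of $u$, combined with the density of half-spaces in $\Hcal_0(p)$ approximating the boundary case, is what bridges this gap and allows one to close the argument.
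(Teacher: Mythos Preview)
The paper does not prove this proposition; it is recalled as a known result, with explicit references to \cite[Lemma 4.2]{Brock} and \cite[Proposition 2.7]{Weth}. Your argument is correct and is essentially the standard proof one finds in those references: the implication $(i)\Rightarrow(ii)$ reduces to the inequality $x\cdot p\ge \sigma_H(x)\cdot p$ via the reflection formula, and the converse uses the perpendicular-bisector half-space of $[x,y]$ together with a continuity approximation to cover the borderline case $x\cdot p=y\cdot p$. The obstacle you identify in the last step is genuine, and your resolution via $y_n\to y$ with $y_n\cdot p<x\cdot p$ (which is always possible unless $y=\pm|y|p$, a trivial case) is the right way to close it.
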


Moreover, the next lemma collects some known properties about polarization; cf. \cite[Lemma 3.1]{Weth} for the first property and \cite[Lemma 2.1]{BartschWethWillem} for the second.

\begin{lemma}\label{lemma: invariance properties of polarization}
Let $u:\Omega\to \R$ be a measurable function and $H\in \Hcal_0$.
\begin{enumerate}[i)]
\item If $F: \Omega \times \R\to \R$ is a continuous function
  such that $F(x,t)=F(y,t)$ for every $x,y \in \Omega$ such that
  $|x|=|y|$ and $t \in \R$ and $\displaystyle \int_\Omega |F(x,u(x))|\, dx<+\infty$, then $\displaystyle \int_\Omega F(x,u_H)\, dx=\int_\Omega F(x,u)\, dx$.
\item $(u_H)^+=(u^+)_H$, $(u_H)^-=-(-u^-)_H$.
\end{enumerate}
\end{lemma}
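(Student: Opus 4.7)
The plan is to exploit three features of the reflection $\sigma_H$: it is a Lebesgue-measure-preserving involution, it maps $\Omega$ onto itself (because $|\sigma_H(x)|=|x|$ and $\Omega$ is radial), and it swaps $\Omega\cap H$ with $\Omega\setminus H$ up to the measure-zero set $\Omega\cap\partial H$. Measurability of $u_H$ is automatic from the measurability of $u$ and of $\sigma_H$, and consequently the integral in i) is well defined.

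For part i), I would split $\int_\Omega F(x,u_H)\,dx$ into the integral over $\Omega\cap H$, where by definition $u_H=\max\{u,u\circ\sigma_H\}$, plus the integral over $\Omega\setminus H$, where $u_H=\min\{u,u\circ\sigma_H\}$. Applying the change of variable $y=\sigma_H(x)$ to the second integral (unit Jacobian) and using the radial invariance $F(\sigma_H(y),t)=F(y,t)$, one transfers it into an integral over $\Omega\cap H$ of $F(y,\min\{u(y),u(\sigma_H(y))\})$. The elementary pointwise identity $F(x,\max\{a,b\})+F(x,\min\{a,b\})=F(x,a)+F(x,b)$ then collapses the sum to $\int_{\Omega\cap H}[F(x,u(x))+F(x,u(\sigma_H(x)))]\,dx$; applying the same change of variables to the term involving $u\circ\sigma_H$ gives back an integral over $\Omega\setminus H$, yielding $\int_\Omega F(x,u)\,dx$.

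For part ii), I would argue pointwise by case distinction on whether $x\in H$ or $x\in\Omega\setminus H$. The key inputs are the two elementary identities $\max\{\max\{a,b\},0\}=\max\{a^+,b^+\}$ and $\max\{\min\{a,b\},0\}=\min\{a^+,b^+\}$, each verified by splitting on the signs of $a,b$. Plugging the definition of $u_H$ into $(u_H)^+=\max\{u_H,0\}$ on each of $\Omega\cap H$ and $\Omega\setminus H$ and comparing with the definition of $(u^+)_H$ gives the first identity. The second identity $(u_H)^-=-(-u^-)_H$ follows from the same case analysis applied to $-u^-\le 0$, using that the polarization of a non-positive function exchanges $\max$ and $\min$ under the sign flip, so that $-(-u^-)_H$ produces exactly $\min\{u^-,u^-\circ\sigma_H\}$ on $\Omega\cap H$ and $\max\{u^-,u^-\circ\sigma_H\}$ on $\Omega\setminus H$, matching $(u_H)^-$.

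The main obstacle is really only bookkeeping: both parts are elementary once the reflection-invariance of $\Omega$ and of $F$ is in hand. The only mildly delicate point is to keep the sign conventions in ii) straight; there is no genuine analytic difficulty, which is consistent with the fact that the paper attributes both identities to the literature.
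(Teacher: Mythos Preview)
Your proposal is correct; both parts are handled by exactly the standard arguments. The paper itself does not supply a proof of this lemma but only cites \cite[Lemma~3.1]{Weth} for part~i) and \cite[Lemma~2.1]{BartschWethWillem} for part~ii), so there is nothing to compare against beyond noting that your sketch reproduces the proofs found in those references.
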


Observe that the second statement of the previous result implies that:
\begin{equation}\label{eq:auxiliary}
(au^+-b u^-)_H=a(u_H)^+-b(u_H)^-,\qquad \forall \, a,b>0.
\end{equation}

Finally, before we head to the proof of Theorem \ref{thm:main2}, we recall the following key estimate from \cite[Lemma 3.7]{BonheureSantosRamosJFA}.

\begin{lemma} \label{lemma: key_estimate_foliated}
Given $u\in L^{\frac{p+1}{p}}(\Omega)$, $v\in L^{\frac{q+1}{q}}(\Omega)$ and any $H\in \Hcal_0$, we have that
\[
\int_{\Omega}uKv\leqslant \int_{\Omega} u_{H}K(v_H).
\]
\end{lemma}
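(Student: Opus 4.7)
My plan is to exploit the representation of $K$ through the Green's function $G(x,y)$ of $-\Delta$ on $\Omega$ with Dirichlet boundary conditions, so that
\[
\int_\Omega uKv\,dx=\int_\Omega\!\!\int_\Omega G(x,y)u(x)v(y)\,dy\,dx.
\]
Since $\Omega$ is radial and $0\in\partial H$, the reflection $\sigma_H$ preserves $\Omega$, and the rotational invariance of $-\Delta$ yields the symmetry $G(\sigma_H x,\sigma_H y)=G(x,y)$; together with the fact that $\sigma_H$ is an involution this also gives $G(\sigma_H x,y)=G(x,\sigma_H y)$. Setting $A:=H\cap\Omega$, I would decompose the double integral into four pieces according to whether each of $x,y$ belongs to $A$ or to $\sigma_H(A)$, then apply $\sigma_H$ as a measure-preserving change of variables to rewrite each piece as an integral over $A\times A$. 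Writing $u_1:=u(x)$, $u_2:=u(\sigma_H x)$, $v_1:=v(y)$, $v_2:=v(\sigma_H y)$, the sum collapses to
\[
\int_A\!\!\int_A\bigl[G(x,y)(u_1v_1+u_2v_2)+G(x,\sigma_H y)(u_1v_2+u_2v_1)\bigr]dy\,dx.
\]

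The same computation applied to $\int_\Omega u_HK(v_H)\,dx$ gives the analogous formula with $(u_i,v_i)$ replaced by $(M_u,m_u)$ and $(M_v,m_v)$, where $M_u:=\max(u_1,u_2)$ and $m_u:=\min(u_1,u_2)$ (similarly for $v$); here I would invoke Lemma~\ref{lemma: invariance properties of polarization}(ii) and \eqref{eq:auxiliary} to identify $u_H(x)=M_u$ and $u_H(\sigma_H x)=m_u$ on $A$. The lemma is thereby reduced to the pointwise statement
\[
G(x,y)(M_uM_v+m_um_v)+G(x,\sigma_H y)(M_um_v+m_uM_v)\ge G(x,y)(u_1v_1+u_2v_2)+G(x,\sigma_H y)(u_1v_2+u_2v_1)
\]
on $A\times A$. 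Using the identity $(M_u+m_u)(M_v+m_v)=(u_1+u_2)(v_1+v_2)$, the four bilinear quantities share a common pairwise sum, so the inequality reduces to a single scalar condition of the form $(a-b)(A-B)\ge0$ with $a=G(x,y)$, $b=G(x,\sigma_H y)$. A short case analysis on the signs of $u_1-u_2$ and $v_1-v_2$ shows that $A-B=M_uM_v+m_um_v-u_1v_1-u_2v_2$ is either $0$ or $(u_1-u_2)(v_2-v_1)$ with the correct sign, hence always nonnegative.

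The main obstacle is therefore establishing the Green's function comparison $G(x,y)\ge G(x,\sigma_H y)$ for $x,y\in A$, which provides the factor $a-b\ge0$. I would obtain it from the maximum principle applied to $w(\cdot):=G(\cdot,y)-G(\cdot,\sigma_H y)$ on $A$: this function is harmonic on $A\setminus\{y\}$ with a positive distributional source at $y\in A$ (the second term being regular there since $\sigma_H y\notin \overline{A}$ generically), vanishes on $\partial\Omega\cap\overline{H}$ by the Dirichlet boundary condition, and vanishes on $\partial H\cap\Omega$ because there $\sigma_H x=x$ forces $G(x,\sigma_H y)=G(\sigma_H x,\sigma_H y)=G(x,y)$. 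The maximum principle then gives $w\ge0$ in $A$. Once this Green's function positivity is secured, integrating the pointwise inequality over $A\times A$ concludes the proof.
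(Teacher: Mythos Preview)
Your argument is correct. The paper does not actually prove this lemma; it merely cites \cite[Lemma~3.7]{BonheureSantosRamosJFA}. Your Green's function approach---splitting $\Omega\times\Omega$ into four blocks via the reflection $\sigma_H$, reducing to the pointwise inequality $[G(x,y)-G(x,\sigma_H y)]\,D\ge 0$ with $D=M_uM_v+m_um_v-(u_1v_1+u_2v_2)\ge 0$, and obtaining the Green's function comparison from the maximum principle on $A=H\cap\Omega$---is precisely the standard method used in that reference (and going back to Baernstein--type rearrangement inequalities). So your proof and the cited one are essentially the same.

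Two minor remarks. First, the identification $u_H(x)=\max(u(x),u(\sigma_H x))$ and $u_H(\sigma_H x)=\min(u(x),u(\sigma_H x))$ for $x\in A$ is the very definition of polarization; there is no need to invoke Lemma~\ref{lemma: invariance properties of polarization}(ii) or \eqref{eq:auxiliary} here. Second, in the maximum principle step it is cleanest to phrase the argument as: for $y$ in the interior of $A$, $w(\cdot)=G(\cdot,y)-G(\cdot,\sigma_H y)$ is harmonic on $A\setminus B_\varepsilon(y)$, vanishes on $\partial A$, and is strictly positive on $\partial B_\varepsilon(y)$ for small $\varepsilon$ (since $G(\cdot,y)\to+\infty$ there while $G(\cdot,\sigma_H y)$ stays bounded); the minimum principle on the annular region then yields $w\ge 0$ on $A\setminus B_\varepsilon(y)$, and letting $\varepsilon\to 0$ gives $w\ge 0$ on $A$. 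The case $y\in\partial H$ (where $\sigma_H y=y$ and $w\equiv 0$) is a null set and irrelevant for the integral.
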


We are now ready to prove our second main result.

\begin{proof}[\textbf{Proof of Theorem \ref{thm:main2}}] Let $(u,v)$ be a least energy nodal solution of \eqref{eq:mainsystem} and take the corresponding pair $(w_1,w_2)\in X$. Fix any $r>0$ such that $\partial B_r(0)\subset \Omega$ and take $p\in \partial B_1(0)$ such that $w_{1}(rp)=\max_{\partial B_r(0)} w_{1}$. Given $H\in \Hcal_0(p)$, we aim at proving that $(w_1)_H(x)=w_1(x)$ and $(w_2)_H(x)=w_2(x)$ for $x\in \Omega\cap H$.
As
\begin{align*}
0&<\lambda \int_\Omega w_1^+ K w_2\, dx+\mu \int_\Omega w_1 K w_2^+\, dx\\
 &\leq \lambda \int_\Omega (w_1^+)_H K(w_2)_H\, dx+\mu \int_\Omega (w_1)_H K(w_2)^+_H\, dx
\end{align*}
and
\begin{align*}
0&<\lambda \int_\Omega (-w_1^-) K w_2\, dx+\mu \int_\Omega w_1 K (-w_2^-)\, dx\\
 &\leq \lambda \int_\Omega (-w_1^-)_H K (w_2)_H\, dx+\mu \int_\Omega (w_1)_H K (-w_2^-)_H\, dx\\
 &= -\lambda \int_\Omega ((w_1)_H)^- K (w_2)_H\, dx-\mu \int_\Omega (w_1)_H K  ((w_2)_H)^-\, dx
\end{align*}
then $((w_1)_H,(w_2)_H)\in \Ncal_0$, and from Proposition \ref{prop:unique_maximum} we know there exist $t_0,s_0>0$ such that
$(t_0^{\lambda}(w_1)_H^+-s_0^{\lambda}((w_1)_H)^-,t_0^{\mu}(w_2)_H^+ - s_0^{\mu}((w_2)_H)^-)\in \Ncal_{\rm nod}$. Thus, by putting together Lemma \ref{lemma: key_estimate_foliated} with \eqref{eq:auxiliary} and with the uniqueness of global maximum,
\begin{align*}
c_{\rm nod}&\leq I(t_0^{\lambda}(w_1)_H^+-s_0^{\lambda}((w_1)_H)^-,t_0^{\mu}(w_2)_H^+ - s_0^{\mu}((w_2)_H)^-)\\
		&=I((t_0^{\lambda}w_1^+ -s_0^{\lambda}w_1^-)_H,(t_0^{\mu}w_2^+ - s_0^{\mu}w_2^-)_H)\\
		&\leq I(t_0^{\lambda}w_1^+-s_0^{\lambda}w_1^-,t_0^{\mu}w_2^+ - s_0^{\mu}w_2^-)=\theta_w(t_0,s_0)\\
		& \leq \sup_{t,s>0} \theta_w(t,s)=\theta_w(1,1)=I(w)=c_{\rm nod}.
\end{align*}
Thus $(t_0,s_0)=(1,1)$, $((w_1)_H,(w_2)_H)\in \Ncal_{\rm nod}$ and $I((w_1)_H,(w_2)_H)=c_{\rm nod}$. By Lemma \ref{lemma:minimum_of_I_is_critical}, $I'((w_1)_H,(w_2)_H)=0$. Going bak to $(u,v)$, we have that both this pair as well as $(u_H,v_H)$ solve \eqref{eq:mainsystem}. Thus
\begin{equation}\label{eq:system_with_polarized}
-\Delta (u_H-u)=|x|^{\beta}(|v_H|^{q-1}v_H-|v|^{q-1}v),\ \ \ -\Delta (v_H-v)=|x|^{\alpha}(|u_H|^{p-1}u_H-|u|^{q-1}u),
\end{equation}
in $\Omega\cap H$, and $u_H-u=v_H-v=0$ on $\partial \Omega\cup (\Omega\cap \partial H)$. As $v_H\geq v$ in $\Omega\cap H$, then $-\Delta (u_H-u)\geq 0$ and by the maximum principle we have that either $u_H\equiv u$ or $u_H>u$. Since $u_H(rp)=u(rp)$, then $u_H\equiv u$ in $\Omega\cap H$. Going back to $\eqref{eq:system_with_polarized}$, we have $-\Delta (v_H-v)=0$, and thus also $v_H\equiv v$.
\end{proof}

\section{Symmetry breaking}\label{eq:Symmetry}

We start by proving Theorem \ref{lemma:u=v}.

\begin{proof}[\textbf{Proof of Theorem \ref{lemma:u=v}}]
Multiplying the first equation of \eqref{eq:mainsystem} by $u$, the second of \eqref{eq:mainsystem} by $v$ and integrating both gives
$$\int_\Omega |\nabla u|^2 \, dx= \int_\Omega |v|^{q-1}vu  |x|^{\beta} dx\leq \left(\int_\Omega |v|^{q+1}|x|^{\beta} dx\right)^{\frac{q}{q+1}}  \left(\int_\Omega |u|^{q+1}  |x|^{\beta} dx\right)^{\frac{1}{q+1}}$$
and 
$$\int_\Omega |\nabla v|^2  \, dx = \int_\Omega |u|^{q-1}uv |x|^{\beta} dx \leq \left(\int_\Omega |u|^{q+1}  |x|^{\beta} dx\right)^{\frac{q}{q+1}}  \left(\int_\Omega |v|^{q+1} |x|^{\beta} dx\right)^{\frac{1}{q+1}}.$$
Multiplying the first equation of \eqref{eq:mainsystem} by $v$, the second of \eqref{eq:mainsystem} by $u$ and integrating both gives
$$\int_\Omega \nabla u \cdot \nabla v  \, dx = \int_\Omega |v|^{q+1} |x|^{\beta} dx = \int_\Omega |u|^{q+1} |x|^{\beta} dx .$$
Putting these estimates together, we infer that 
$$\int_\Omega |\nabla u|^2 \, dx + \int_\Omega |\nabla v|^2 \, dx \leq 2 \int_\Omega \nabla u \cdot \nabla v  \, dx,$$
which obviously implies $u=v$.
\end{proof}
Remember that for the single equation 
\begin{equation}\label{eq:p=q}
-\Delta u = |u|^{q-1}u, \quad \mbox{ in } \Omega, \qquad u=0 \mbox{ on } \partial \Omega,
\end{equation}
it is known, cf. \cite[Theorem 1.3]{AftalionPacella}, that any least energy nodal solution is non radial when $\Omega \con \R^N$, $N \geq 2$, is either a ball or an annulus centred at the origin. We will show that when $(p,q)$ is close to some couple $(q_0,q_0)$, and $(\alpha,\beta)$ is close to $(0,0)$, this property is also true. 

Take $q_0$ satisfying
\begin{equation}\label{assumption on q_0}
q_0>1\quad \text{ such that } \quad q_0+1<2N/(N-2)\text{ if } N\geq 3,
\end{equation}
and $\delta_0$ such that
\begin{equation}\label{assumption on delta_0}
q_0-\delta_0>1 \quad \text{ and } \quad q_0+1+\delta_0<2N/(N-2)\text{ if } N\geq 3,
\end{equation}
that is, such that the square $[q_0-\delta,q_0+\delta_0]^2$ is contained in the region of the points $(p,q)$ such that \eqref{eq:subcritical} holds. 

The proof of Theorem \ref{thm:symmetrybreak_close_diag} consists in doing some asymptotic estimates of the least energy nodal solutions and levels as $p,q\to q_0$ and $\alpha,\beta\to 0$, combined with the known fact that, at the diagonal point $(q_0,q_0)$ and $\alpha=\beta=0$, least energy nodal solutions are non radial. Having this in mind, let us introduce some notations. Given $(p,q)$ satisfying \eqref{eq:subcritical}, $\alpha,\beta\geq 0$, we denote by $c_\text{nod}^{p,q,\alpha,\beta}$ the least energy nodal level of \eqref{eq:mainsystem}, and by $E_{p,q,\alpha,\beta}$ its associated energy \eqref{energy E}. We will also use the variational framework introduced in Section \ref{sec:existence}, denoting by $I_{p,q,\alpha,\beta}$ the energy functional \eqref{energy I}. Recall that $E_{p,q,\alpha,\beta}(u,v)=I_{p,q,\alpha,\beta}(w_1,w_2)$ at critical points, under the relation 
\[
(u, v):=(|x|^{-\frac{\alpha}{p}} |w_1|^{\frac{1}{p}-1}w_1, |x|^{-\frac{\beta}{q}} |w_2|^{\frac{1}{q}-1}w_2).
\]
Finally, recall the characterizations (cf. Theorem \ref{thm:levelsequiv}):
\[
c^{p,q,\a,\b}_{\rm nod} = \inf_{\Ncal_{\rm nod}^{p,q,\a,\b}} I_{p,q,\a,\b}= \inf_{w\in \Ncal_0^{p,q,\a,\b}} \sup_{t,s> 0} I_{p,q,\a,\b}(t^\lambda w_1^+-s^\lambda w_1^-,t^\mu w_2^+-s^\mu w_2^-),
\]
where 
 \[
 \lambda=\lambda(p,q):=\frac{2p(q+1)}{p+q+2pq}, \qquad \mu=\mu(p,q):=\frac{2q(p+1)}{p+q+2pq},
 \]
and
\begin{multline*}
\Ncal_{\rm nod}^{p,q,\a,\b}=\left\{(w_1, w_2)\in X_{p,q,\a,\b}: \ w_1^{\pm}\not \equiv 0, \ w_2^{\pm}\not \equiv 0 \ \text{ and }\right. \\ 
\left.I_{p,q,\a,\b}'(w)(\lambda w_1^+,\mu w_2^+)=I_{p,q,\a,\b}'(w)(\lambda w_1^-,\mu w_2^-)=0\right\},
\end{multline*}

\begin{align*}
\Ncal_0^{p,q,\a,\b}&:=\left\{w\in X_{p,q,\a,\b}:\ \begin{array}{c} \lambda \int_\Omega w_1^+ Kw_2\, dx+\mu \int_\Omega w_1 Kw_2^+\, dx >0\\[0.1cm] 
			\lambda \int_\Omega w_1^- Kw_2\, dx+\mu \int_\Omega w_1 Kw_2^-\, dx <0  \end{array}\right\},
\end{align*}
with $X_{p,q,\a,\b}=L^\frac{p+1}{p}(\Omega,|x|^{-\frac{\a}{p}})\times L^\frac{q+1}{q}(\Omega,|x|^{-\frac{\beta}{q}})$.

For simplicity, when $p=q$ and $\alpha=\beta=0$ we will use the notation $c_{\rm nod}^{p}$ for $c_{\rm nod}^{p,p,0,0}$. First we prove an uniform lower bound for the positive and negative parts of elements of $\Ncal_{\rm nod}^{p,q,\alpha,\beta}$.

\begin{lemma}\label{lemma:lower_bounds}
Given $q_0$ satisfying \eqref{assumption on q_0} there exists $\delta_0>0$ and $\eps>0$ such that
\[
\int_\Omega |w_1^\pm|^\frac{p+1}{p} |x|^{-\frac{\a}{p}}\, dx \geq \eps \ \ \text{and} \ \ \int_\Omega |w_2^\pm|^\frac{q+1}{q} |x|^{-\frac{\b}{q}}\, dx \geq \eps
\]
for every $(w_1,w_2)\in \Ncal_{\rm nod}^{p,q,\a,\b}$ with $p,q\in [q_0-\delta_0,q_0+\delta_0]$ and $\a,\b\in [0,\delta_0]$.
\end{lemma}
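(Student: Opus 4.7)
The plan is to mimic Step 3 of the proof of Proposition~\ref{prop:levelsequiv_eps}, where the analogous lower bounds were established for fixed $(p,q,\alpha,\beta)$, but now track the dependence of every constant on the parameters to ensure uniformity on the compact parameter set $[q_0-\delta_0,q_0+\delta_0]^2\times[0,\delta_0]^2$. Let $(w_1,w_2)\in\Ncal_{\rm nod}^{p,q,\alpha,\beta}$ and write $a^\pm,b^\pm$ as in \eqref{eq:Amaismenos}. The Nehari identity $I_{p,q,\alpha,\beta}'(w)(\lambda w_1^+,\mu w_2^+)=0$ combined with the fact that $w_1^-,w_2^-\ge0$ and that $K$ preserves positivity (whence $\int w_1^+ Kw_2^-\ge0$ and $\int w_2^+Kw_1^-\ge0$) yields
\[
\lambda a^+ + \mu b^+ \;\le\; 2\int_\Omega w_1^+\,Kw_2^+\,dx.
\]

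The right-hand side is controlled by Hölder's inequality, standard elliptic regularity for $K$, and the embedding $W^{2,(q+1)/q}(\Omega)\hookrightarrow L^{p+1}(\Omega)$, which remains compact and subcritical in the prescribed range thanks to \eqref{assumption on delta_0}. Coupled with \eqref{eq:estimate_between_L^r} to pass to the weighted $L^r$-norms, this gives
\[
\lambda a^+ + \mu b^+ \;\le\; C_1\,(a^+)^{p/(p+1)}\,(b^+)^{q/(q+1)},
\]
where $C_1=C_1(\Omega,p,q,\alpha,\beta)$ depends on the Sobolev embedding constants and on $C(\Omega)^{\alpha/(p+1)+\beta/(q+1)}$; both depend continuously on $(p,q,\alpha,\beta)$ and are therefore bounded uniformly on the compact parameter set above, provided $\delta_0$ is small enough. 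Young's inequality with conjugate exponents $(p+1)/p$ and $p+1$ then allows one to absorb the $a^+$-term and obtain
\[
\mu b^+ \;\le\; C_2\,(b^+)^{q(p+1)/(q+1)},
\]
again with $C_2$ uniform in the parameters. Since $q(p+1)/(q+1)-1=(pq-1)/(q+1)$ is bounded below by a strictly positive constant (because $pq\ge(q_0-\delta_0)^2>1$ by \eqref{assumption on delta_0}) and $\mu(p,q)$ is continuous and bounded away from zero, one extracts a uniform positive lower bound on $b^+$. The estimates for $b^-$ and $a^\pm$ are obtained symmetrically, using $I_{p,q,\alpha,\beta}'(w)(\lambda w_1^-,\mu w_2^-)=0$ and exchanging the roles of the two components.

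The main technical point is the uniformity of the constants $C_1,C_2$, and more subtly of the exponents appearing at the end of the argument. All of this reduces to the continuity with respect to $(p,q,\alpha,\beta)$ of the Sobolev/elliptic embedding constants, of $\lambda(p,q),\mu(p,q)$, and of $C(\Omega)^{\alpha/(p+1)},C(\Omega)^{\beta/(q+1)}$, together with the strict positivity of $pq-1$; all these facts are standard and follow at once from \eqref{assumption on delta_0} by choosing $\delta_0$ sufficiently small.
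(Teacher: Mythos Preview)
Your argument is correct and follows essentially the same route as the paper: derive $\lambda a^+ + \mu b^+ \le 2\int_\Omega w_1^+ Kw_2^+$ from the Nehari identity and positivity of $K$, bound the right-hand side by $C(a^+)^{p/(p+1)}(b^+)^{q/(q+1)}$, absorb $a^+$ via Young, and conclude from $q(p+1)/(q+1)>1$. The only substantive difference is how uniformity of the constant $C$ in $(p,q,\alpha,\beta)$ is justified: you appeal to continuous dependence of the Sobolev embedding constants on the exponents, whereas the paper avoids this by reducing, via H\"older, to the single fixed embedding $W^{2,(q_0+\delta_0+1)/(q_0+\delta_0)}\hookrightarrow L^{q_0+\delta_0+1}$ and a single fixed elliptic estimate for $K$. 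Both are valid; the paper's version is slightly more explicit and self-contained. (A minor slip: the notation $a^\pm,b^\pm$ you intend is the one from the proof of Proposition~\ref{prop:levelsequiv_eps}, not \eqref{eq:Amaismenos}, which defines the combined quantities $A^\pm$.)
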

\begin{proof}
We use the estimates in the proof of Theorem \ref{thm:levelsequiv} - step 3, this time keeping a better track of the constants. We split the proof in several steps.

\medbreak

\noindent 1) There exists $C_1$ (independent of $p$ and $q$) such that
\[
\|u\|_{p+1}\leq C_1 \|u\|_{W^{2,\frac{q+1}{q}}} \qquad \forall u\in W^{2,\frac{q+1}{q}}(\Omega),\ p,q\in [q_0-\delta_0,q_0+\delta_0].
\]

Since $\Omega$ has finite measure and $p\leq q_0+\delta_0$, from H\"older's estimates we deduce that
\[
\|u\|_{p+1} \leq |\Omega|^\frac{q_0+\delta_0-p}{(p+1)(q_0+\delta_0+1)} \|u\|_{q_0+\delta_0+1} \leq \kappa_1 |\Omega|^\frac{q_0+\delta_0-p}{(p+1)(q_0+\delta_0+1)}  \|u\|_{W^{2,\frac{q_0+\delta_0+1}{q_0+\delta_0}}},
\]
where $\kappa_1$ is a constant associated to the embedding $W^{2,\frac{q_0+\delta_0+1}{q_0+\delta_0}} \hookrightarrow L^{q_0+\delta_0+1}$; recall that $\delta_0$ is such that \eqref{assumption on delta_0} holds. Moreover, again by using H\"older estimates and also that $(q_0+\delta_0+1)/(q_0+\delta_0)\leq (q+1)/q$,
\begin{align*}
\|u\|_{W^{2,\frac{q_0+\delta_0+1}{q_0+\delta_0}}} &= \left( \sum_{|\alpha|\leq 2} \int_\Omega |D^\alpha u|^\frac{q_0+\delta_0+1}{q_0+\delta_0}\, dx\right)^\frac{q_0+\delta_0}{q_0+\delta_0+1}\\
									& \leq \left(  |\Omega|^\frac{q_0+\delta_0-q}{(q+1)(q_0+\delta_0)} \sum_{|\alpha|\leq 2}  \|D^\alpha u\|_{\frac{q+1}{q}}^\frac{q_0+\delta_0+1}{q_0+\delta_0}   \right)^\frac{q_0+\delta_0}{q_0+\delta_0+1}\\
									&\leq \left(\frac{N(N-1)}{2}+N+1\right)^\frac{q_0+\delta_0}{q_0+\delta_0+1} |\Omega|^\frac{q_0+\delta_0-q}{(q+1)(q_0+\delta_0+1)} \|u\|_{W^{2,\frac{q+1}{q}}},
\end{align*}
and thus $\|u\|_{p+1}\leq \kappa(p,q) \|u\|_{W^{2,\frac{q+1}{q}}}$,  with 
\[ 
\kappa(p,q)=\kappa_1 \left(\frac{N(N-1)}{2}+N+1\right)^\frac{q_0+\delta_0}{q_0+\delta_0+1}  |\Omega|^\frac{q_0+\delta_0-q}{(q+1)(q_0+\delta_0+1)} |\Omega|^\frac{q_0+\delta_0-p}{(p+1)(q_0+\delta_0+1)},
\]
which is bounded from above by some $C_1$, for every $p,q\in [q_0-\delta_0,q_0+\delta_0]$.

\medbreak

\noindent 2) There exists $C_2$ such that, for all $u\in W^{2,\frac{q_0+\delta+1}{q_0+\delta}}(\Omega)\cap W^{1,\frac{q_0+\delta+1}{q_0+1}}_0(\Omega)$ 

\[
\|Ku\|_{W^{2,\frac{q_0+\delta_0+1}{q_0+\delta_0}}}\leq C_2 \|u\|_{\frac{q_0+\delta_0+1}{q_0+\delta}},
\]
cf. \cite[Lemma 9.17]{GilbargTrudinger}.
\medbreak

\noindent 3) As $(w_1,w_2)\in \Ncal_{\rm nod}^{p,q,\alpha,\beta}$, from steps 1) and 2) above there exists $C>0$ independent of $p,q,\a,\b$ such that
\begin{align*}
\lambda \int_\Omega |w_1^+|^\frac{p+1}{p} &|x|^{-\frac{\a}{p}}\, dx+ \mu \int_\Omega |w_2^+|^\frac{q+1}{q} |x|^{-\frac{\b}{q}} \, dx \leq 2\int_\Omega w_1^+ K w_2^+\, dx \\
					&\leq 2 \|w_1^+\|_{\frac{p+1}{p}}\|K w_2^+\|_{p+1} \leq 2C_1 \|w_1^+\|_{\frac{p+1}{p}} \|K w_2^+\|_{W^{2,\frac{q_0+1+\delta_0}{q_0+\delta_0}}}\\
					&\leq 2C_1C_2 \|w_1^+\|_{\frac{p+1}{p}} \|w_2^+\|_{\frac{q_0+\delta_0+1}{q_0+\delta_0}} \leq \widetilde C \|w_1^+\|_{\frac{p+1}{p}} \|w_2^+\|_{\frac{q+1}{q}} \\
					&\leq C \|w_1^+\|_{\frac{p+1}{p},\frac{\a}{p}} \|w_2^+\|_{\frac{q+1}{q},\frac{\b}{q}},
\end{align*}
where we have used estimate \eqref{eq:estimate_between_L^r} and the fact that $q\leq q_0+\delta_0$. By using the Young's inequality
\[
C ab \leq \frac{\lambda}{2} a^\frac{p+1}{p} + \frac{(2p)^{p} C^{p+1}}{\lambda^p (p+1)^{p+1}} b^{p+1} \qquad \forall a,b\geq0,
\]
we have
\[
\mu \int_\Omega |w_2^+|^\frac{q+1}{q} |x|^{-\frac{\b}{q}} \, dx \leq \frac{(2p)^{p} C^{p+1}}{\lambda^p (p+1)^{p+1}}\left(\int_\Omega |w_2^+|^\frac{q+1}{q} |x|^{-\frac{\b}{q}} \, dx\right)^\frac{q(p+1)}{q+1},
\]
and thus
\[
\int_\Omega |w_2^+|^\frac{q+1}{q} |x|^{-\frac{\b}{q}} \, dx\geq \left( \frac{\mu \lambda^p (p+1)^{p+1}}{(2p)^{p} C^{p+1}} \right)^\frac{q+1}{pq-1}=:K(p,q). 
\]
As $K(q_0,q_0)>0$, then from sufficiently small $\delta_0$ we have $K(p,q)\geq \eps>0$ for every  $p,q\in [q_0-\delta_0,q_0+\delta_0]$.

The lower bounds for the remaining integrals follow in an analogous way.
\end{proof}

\begin{lemma}\label{lemma:upperbound}
We have
\[
\limsup c_{\rm nod}^{p,q,\a,\b}\leq c_{\rm nod}^{q_0} \qquad \text{ as } p,q\to q_0,\ \a,\b\to 0.
\]
In particular, there exists $\delta_0$ and $\kappa>0$ such that
\[
0<c_{\rm nod}^{p,q,\a,\b}\leq \kappa,\qquad \forall \, p,q\in [q_0-\delta_0,q_0+\delta_0],\ \a,\b\in [0,\delta_0].
\]
\end{lemma}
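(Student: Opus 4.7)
The plan is to use the variational characterization of $c_{\rm nod}^{p,q,\a,\b}$ from Theorem \ref{thm:levelsequiv} together with a fixed reference minimizer at the diagonal parameters $(q_0,q_0,0,0)$, which we will use as a test element for the perturbed problems after projecting via the fibering map. The natural strategy is: (i) pick a reference minimizer, (ii) check it sits in the perturbed open cone $\Ncal_0^{p,q,\a,\b}$, (iii) project it onto $\Ncal_{\rm nod}^{p,q,\a,\b}$ via Proposition \ref{prop:unique_maximum}, and (iv) show the projected energies converge to $c_{\rm nod}^{q_0}$.

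For step (i), Theorem \ref{thm:main1} together with Theorem \ref{thm:levelsequiv} yields $w^0=(w_1^0,w_2^0)\in \Ncal_{\rm nod}^{q_0,q_0,0,0}$ with $I_{q_0,q_0,0,0}(w^0)=c_{\rm nod}^{q_0}$. By Lemma \ref{lemma:minimum_of_I_is_critical}, $w^0$ is a critical point, hence the associated pair $(u,v)=(|w_1^0|^{1/q_0-1}w_1^0,|w_2^0|^{1/q_0-1}w_2^0)$ is a strong, and by Sirakov's bootstrap a classical, solution of \eqref{eq:mainsystem}. In particular $w_1^0,w_2^0\in L^\infty(\Omega)$, which will allow us to apply dominated convergence below. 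For step (ii), the $L^\infty$-bound and the bounded range of parameters ensure $w^0\in X_{p,q,\a,\b}$ for all sufficiently small $\delta_0$. Moreover, since $w^0\in \Ncal_{\rm nod}^{q_0}\subset \Ncal_0^{q_0}$ (Lemma \ref{lemma:N_nonempty}), the strict defining inequalities for $\Ncal_0$ hold at base parameters; as those inequalities only involve the continuous functions $\lambda(p,q),\mu(p,q)$ and integrals of $w^0$ against $Kw^0$ (which do not depend on $p,q,\a,\b$), they persist for $(p,q,\a,\b)$ in a neighborhood of $(q_0,q_0,0,0)$. Hence $w^0\in \Ncal_0^{p,q,\a,\b}$ for $\delta_0$ small.

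By Proposition \ref{prop:unique_maximum} applied to the perturbed functional $I_{p,q,\a,\b}$, there is a unique $(t,s)=(t_{p,q,\a,\b},s_{p,q,\a,\b})\in (0,\infty)^2$ such that
\[
w^{p,q,\a,\b}:=\bigl(t^{\lambda}(w_1^0)^+-s^{\lambda}(w_1^0)^-,\, t^{\mu}(w_2^0)^+-s^{\mu}(w_2^0)^-\bigr)\in \Ncal_{\rm nod}^{p,q,\a,\b},
\]
so $c_{\rm nod}^{p,q,\a,\b}\leq I_{p,q,\a,\b}(w^{p,q,\a,\b})$. At base parameters, $\lambda=\mu=1$ and the unique projection is $(1,1)$ because $w^0$ is already in $\Ncal_{\rm nod}^{q_0}$. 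The hard (but technical) step is now to prove $(t_{p,q,\a,\b},s_{p,q,\a,\b})\to(1,1)$ and $I_{p,q,\a,\b}(w^{p,q,\a,\b})\to I_{q_0}(w^0)=c_{\rm nod}^{q_0}$. For this, note that the coefficients $A^\pm,B^\pm,C_1,C_2$ of the system \eqref{eq:system_t_s} depend continuously on the parameters: $B^\pm,C_{1,2}$ are independent of $(p,q,\a,\b)$, while
\[
A^\pm(p,q,\a,\b)=\tfrac{p}{p+1}\!\int_\Omega |w_1^{0,\pm}|^{\frac{p+1}{p}}|x|^{-\frac{\a}{p}}\,dx+\tfrac{q}{q+1}\!\int_\Omega |w_2^{0,\pm}|^{\frac{q+1}{q}}|x|^{-\frac{\b}{q}}\,dx
\]
converges to $A^\pm(q_0,q_0,0,0)$ by dominated convergence (the integrands are dominated by $(\|w_1^0\|_\infty+\|w_2^0\|_\infty)^{2}|x|^{-\delta_0/(q_0-\delta_0)}$ which is locally integrable for $\delta_0$ small). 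The same compactness/uniqueness argument used in Lemmas \ref{lemma:proj_continuity1}–\ref{lemma:proj_continuity2} (lower bound on $t,s$ from the $B^\pm$-equations, ruling out unboundedness via the $\Ncal_0$-conditions, and identifying the limit through Proposition \ref{prop:unique_maximum}) then gives $(t,s)\to(1,1)$. A further application of dominated convergence yields $I_{p,q,\a,\b}(w^{p,q,\a,\b})\to I_{q_0}(w^0)=c_{\rm nod}^{q_0}$.

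Combining steps gives $\limsup_{(p,q,\a,\b)\to(q_0,q_0,0,0)} c_{\rm nod}^{p,q,\a,\b}\leq c_{\rm nod}^{q_0}$, which is the first assertion. The ``in particular'' statement follows at once: shrinking $\delta_0$ we obtain $c_{\rm nod}^{p,q,\a,\b}\leq c_{\rm nod}^{q_0}+1=:\kappa$ on the small box, while positivity $c_{\rm nod}^{p,q,\a,\b}>0$ is a direct consequence of the identity \eqref{functionalpositivenehari}, which shows $I_{p,q,\a,\b}$ is strictly positive on $\Ncal_{\rm nod}^{p,q,\a,\b}$. The main obstacle is the continuity of the projection $(t,s)$ in the parameters $(p,q,\a,\b)$; this is however only a variant of the $\varepsilon$-continuity proved in Lemmas \ref{lemma:proj_continuity1}–\ref{lemma:proj_continuity2}, where the role of $\varepsilon\to0$ is played by the perturbation of the exponents and weights, and the required regularity of $w^0$ to justify the dominated convergence arguments is precisely supplied by the classical nature of the reference solution.
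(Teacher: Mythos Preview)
Your proposal is correct and follows essentially the same route as the paper: fix a reference minimizer $w^0$ at $(q_0,q_0,0,0)$, observe it lies in $\Ncal_0^{p,q,\a,\b}$ for nearby parameters, project via Proposition~\ref{prop:unique_maximum}, and show the projection coordinates $(t,s)\to(1,1)$ so that the energies converge. The only minor deviation is in how you bound $(t,s)$ away from $0$ and $\infty$: you invoke the system-equation argument of Lemmas~\ref{lemma:proj_continuity1}--\ref{lemma:proj_continuity2} (relying on the continuity of $A^\pm$ via dominated convergence, justified by the $L^\infty$ regularity of $w^0$), whereas the paper obtains the lower bound from Lemma~\ref{lemma:lower_bounds} and the upper bound from a direct $\theta$-goes-to-$-\infty$ estimate; both arguments are valid and yield the same conclusion.
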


\begin{proof} Take $p_n,q_n\to q_0$, $\a_n,\b_n\to 0$.

\medbreak

\noindent 1) We adapt some ideas from \cite[Lemma 3]{BonheureSchaftingen}, where a different problem is considered. Let $(w_1,w_2)$ be such that $w_1^{\pm}\not \equiv 0$, $w_2^{\pm}\not \equiv 0$,
\[
I_{q_0}(w_1,w_2)=c_{\rm nod}^{q_0},\qquad I'_{q_0}(w_1,w_2)=0,
\]
where $I_{q_0} = I_{q_0,q_0,0,0}$. Denote $\lambda_n:=\lambda(p_n,q_n)$ and $\mu_n:=\mu(p_n,q_n)$. Since $\lambda_n,\mu_n\to 1$ as $n\to \infty$ and
\[
\int_\Omega w_1^+ K w_2\, dx+\int_\Omega w_1 K w_2^+\, dx>0,\quad \int_\Omega w_1^- K w_2\, dx+\int_\omega w_1 K w_2^-\, dx<0,
\]
then $(w_1,w_2)\in \Ncal_0^{p_n,q_n,\a_n,\b_n}$ for large $n$, and
\[
c_{\rm nod}^{p_n,q_n,\a_n,\b_n}\leq \sup_{t,s>0} I_{p_n,q_n,\a_n,\b_n} (t^{\lambda_n} w_1^+ - s^{\lambda_n} w_1^-,t^{\mu_n}w_2^+-s^{\mu_n}w_2^-).
\]
Assume that the supremum at the right hand side is achieved at $(t,s)=(t_n,s_n)$.

\medbreak 

\noindent 2) We claim that $t_n,s_n\to 1$.

\smallbreak

\noindent 2a) First observe that $t_n, s_n$ are bounded. In fact, repeating the computations of Lemma \ref{lemma:global_maximum}, we have this time that
\begin{multline*}
I_{p_n,q_n,\a_n,\b_n}(t_n^{\lambda_n} w_1^+ - s_n^{\lambda_n} w_1^-,t_n^{\mu_n}w_2^+-s_n^{\mu_n}w_2^-)\leq A_n^+ t_n^{\gamma_{n}}+A_n^- s_n^{\gamma_{n}}+\\
+\left(\frac{1}{2}(\lambda_n C_1+\mu_n C_2)-B^+\right) t_n^2+\left(\frac{1}{2}(\mu_n C_1+\lambda_n C_2)-B^-\right) s_n^2,
\end{multline*}
with 
\[
A_n^\pm=\frac{p_n}{p_n+1}\int_\Omega |w_1^\pm|^\frac{p_n+1}{p_n} |x|^{-\frac{\a_n}{p_n}}\, dx +\frac{q_n}{q_n+1} \int_\Omega |w_2^\pm|^\frac{q_n+1}{q_n}|x|^{-\frac{\b_n}{q_n}}\, dx,
\] 
(positive and bounded in $n$), $B^\pm$ and $C_i$ are as in Lemma \ref{lemma:global_maximum}, and
\[
\gamma_{n}=\lambda_n\frac{p_n+1}{p_n}=\mu_n \frac{q_n+1}{q_n}\to \frac{q_0+1}{q_0}\in (1,2).
\]
Since moreover
\[
\frac{1}{2}(\lambda_n C_1+\mu_n C_2)-B^+\to \frac{1}{2}(C_1+C_2)-B^+<0,
\] 
\[
\frac{1}{2}(\mu_n C_1+\lambda_n C_2)-B^-\to \frac{1}{2}(C_1+C_2)-B^-<0,
\] 
then if $|s_n|+|t_n|\to \infty$ we would have
\[
0<c_{\rm nod}^{p_n,q_n,\a_n,\b_n}\leq I_{p_n,q_n,\a_n,\b_n}(t_n^{\lambda_n} w_1^+ - s_n^{\lambda_n} w_1^-,t_n^{\mu_n}w_2^+-s_n^{\mu_n}w_2^-)\to -\infty,
\]
a contradiction.

\smallbreak

\noindent 2b) We have $t_n,s_n\not \to 0$. In fact,
\[
(t_n^{\lambda_n} w_1^+ - s_n^{\lambda_n} w_1^-,t_n^{\mu_n}w_2^+-s_n^{\mu_n}w_2^-)\in \Ncal_{\rm nod}^{p_n,q_n,\a_n,\b_n},
\]
hence by Lemma \ref{lemma:lower_bounds}
\[
\int_\Omega |t_n^{\lambda_n} w_1^+ |^\frac{p_n+1}{p_n} |x|^{-\frac{\a_n}{p_n}}\, dx,\ \int_\Omega |s_n^{\lambda_n} w_1^- |^\frac{p_n+1}{p_n} |x|^{-\frac{\a_n}{p_n}}\, dx\geq \eps>0,
\]
which proves the statement.

\smallbreak

\noindent 2c) The claim of 2) now follows. We have $t_n\to \bar t\neq 0$, $s_n\to \bar s\neq 0$.  Since
\begin{align*}
I_{p_n,q_n,\a_n,b_n}(w_1,w_2)&\leq \sup_{t,s>0} I_{p_n,q_n,\a_n,b_n}( t^{\lambda_n} w_1^+ - s^{\lambda_n} w_1^-, t^{\mu_n} w_2^+ - s^{\mu_n} w_2^-)\\
				&= I_{p_n,q_n,\a_n,\b_n}(t_n^{\lambda_n} w_1^+ - s_n^{\lambda_n} w_1^-,t_n^{\mu_n}w_2^+-s_n^{\mu_n}w_2^-).
\end{align*}
by passing to the limit,
\[
\sup_{t,s>0} I_{q_0}( t w_1^+ - s w_1^-, t w_2^+ - s w_2^-)=I_{q_0}(w_1,w_2) \leq I_{q_0}(\bar t w_1^+ - \bar s w_1^-, \bar t w_2^+ - \bar s w_2^-).
\]
By the uniqueness provided by Proposition \ref{prop:unique_maximum}, we have $\bar t=\bar s=1$.

\medbreak

\noindent 3) Finally, by making $n\to \infty$ in the inequality
\[
c_{\rm nod}^{p_n,q_n,\a_n,\b_n}\leq I_{p_n,q_n,\a_n,\b_n}(t_n^{\lambda_n}w_1^+-s_n^{\lambda_n}w_1^-,t_n^{\mu_n}w_2^+-s_n^{\mu_n} w_2^-),
\]
we obtain
\[
\limsup c_{\rm nod}^{p_n,q_n,\a_n,\b_n}\leq I_{q_0}(w_1,w_2)=c_{\rm nod}^{q_0}.
\]
\end{proof}

As a consequence, we have the following a priori bound.
\begin{lemma}
Given $q_0$ satisfying \eqref{assumption on q_0} there exists $\delta_0>0$ and $\kappa>0$ such that
\[
\| (u,v)\|_\infty \leq \kappa  
\]
for every $(u,v)$ least energy nodal solution of \eqref{eq:mainsystem} with $p,q\in [q_0-\delta_0,q_0+\delta_0]$, $\alpha, \beta \in [0,\delta_0]$.
\end{lemma}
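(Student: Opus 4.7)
The strategy combines the uniform upper bound on the energy provided by Lemma~\ref{lemma:upperbound} with a standard elliptic bootstrap, verifying that every constant involved is uniform in the compact parameter region $(p,q)\in[q_0-\delta_0,q_0+\delta_0]^2$, $(\alpha,\beta)\in[0,\delta_0]^2$.

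First, multiplying the equations in \eqref{eq:mainsystem} by $v$ and $u$ respectively and integrating by parts gives
\[
\int_\Omega\nabla u\cdot\nabla v\,dx=\int_\Omega |x|^\alpha |u|^{p+1}\,dx=\int_\Omega |x|^\beta |v|^{q+1}\,dx,
\]
which, combined with \eqref{eq:comparision_between_I_E}, yields
\[
\int_\Omega |x|^\alpha|u|^{p+1}\,dx=\int_\Omega |x|^\beta|v|^{q+1}\,dx=\frac{(p+1)(q+1)}{pq-1}E(u,v).
\]
Since Lemma~\ref{lemma:upperbound} gives $E(u,v)=c_{\rm nod}^{p,q,\alpha,\beta}\le\kappa$, and $pq-1$ is bounded away from $0$ on the compact set, both weighted integrals are uniformly bounded. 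Passing to the dual variables $w_1=|x|^\alpha|u|^{p-1}u$ and $w_2=|x|^\beta|v|^{q-1}v$, and using that $|x|^{\alpha/p}$ and $|x|^{\beta/q}$ are uniformly bounded above on $\Omega$, I obtain uniform bounds
\[
\|w_1\|_{(p+1)/p}\le C,\qquad \|w_2\|_{(q+1)/q}\le C.
\]

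Next, the pair $(u,v)$ solves $-\Delta u=w_2$, $-\Delta v=w_1$ with zero Dirichlet data, so Calder\'on-Zygmund theory plus the Sobolev embedding $W^{2,r}(\Omega)\hookrightarrow L^{Nr/(N-2r)}(\Omega)$ for $r<N/2$ and $W^{2,r}(\Omega)\hookrightarrow L^\infty(\Omega)$ for $r>N/2$ launch a bootstrap: given $u\in L^{s_k}$ and $v\in L^{t_k}$ with uniform norms, the uniform boundedness of the weights gives uniform $L^{s_k/p}$ and $L^{t_k/q}$ bounds on $w_1$ and $w_2$, and one further application of Calder\'on-Zygmund plus Sobolev produces larger exponents $s_{k+1}$, $t_{k+1}$ with uniform norms. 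The iteration for $a_k:=1/s_k$ satisfies $a_{k+2}=pq\,a_k-2(q+1)/N$, which has fixed point $a^*=2(q+1)/(N(pq-1))$; a direct computation shows that \eqref{eq:subcritical} is exactly equivalent to $a_0<a^*$, so $a_k-a^*$ grows geometrically in magnitude at rate $(pq)^{k/2}$ and $a_k$ eventually becomes non-positive, meaning that $u\in L^\infty(\Omega)$ is reached in a finite number of steps. The analogous conclusion for $v$ follows likewise.

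The main obstacle is uniformity: the Calder\'on-Zygmund and Sobolev constants depend on the exponents and blow up as the latter approach $N/2$, and the number of bootstrap iterations a priori depends on the parameters. Both issues are resolved by \eqref{assumption on delta_0}, which keeps $(p,q,\alpha,\beta)$ strictly inside the subcritical region so that $a^*-a_0$ is bounded below by a constant independent of the parameters; this provides a uniform integrability gain per step, keeps all iteration exponents uniformly bounded away from $N/2$, and bounds the number of iterations uniformly. Putting everything together yields the desired uniform $L^\infty$-bound $\|u\|_\infty+\|v\|_\infty\le\kappa$.
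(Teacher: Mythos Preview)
Your argument is correct and follows the same overall strategy as the paper: feed the uniform energy bound from Lemma~\ref{lemma:upperbound} into a Calder\'on--Zygmund/Sobolev bootstrap, and check uniformity in the parameters. The only difference is in how that uniformity is obtained. The paper observes that on the box $[q_0-\delta_0,q_0+\delta_0]^2\times[0,\delta_0]^2$ both nonlinearities are dominated by a \emph{single fixed} subcritical power,
\[
\bigl||x|^\alpha|s|^{p-1}s\bigr|,\ \bigl||x|^\beta|s|^{q-1}s\bigr|\le C\bigl(1+|s|^{q_0+\delta_0}\bigr),
\]
and then invokes a bootstrap with fixed exponents from \cite[Lemma~5.4]{RamosTavares} (see also \cite[Theorem~5.18]{BonheureSantosTavares}). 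This reduction makes all constants automatically parameter-free and sidesteps the minor technical issue in your sketch that for some isolated parameter values an intermediate exponent $s_k/p$ might land exactly on $N/2$; your remedy (uniform gap $a^*-a_0$, uniform rate $pq$, uniformly many steps) is fine but requires a small extra perturbation argument at such exponents. Both routes work; the paper's is shorter because it trades the parameter-dependent iteration for a single fixed one.
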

\begin{proof}
Having the uniform upper bound of the energy levels coming from the previous lemma, and since the nonlinearities in \eqref{eq:mainsystem} satisfy
\[
\left| |x|^\a |s|^{p_n} s\right|, \left||x|^\b |s|^{q_n}s\right|\leq C(1+|s|^{q_0+1+\delta})\quad \forall s\in \R,
\]
for $p,q\in [q_0-\delta_0,q_0+\delta_0]$, $\delta \in [0,\delta_0]$, $C$ independent of $p,q,\a,\b$, then one can reason exactly as in the proof of \cite[Lemma 5.4]{RamosTavares} (see also \cite[Theorem 5.18]{BonheureSantosTavares}) to obtain uniform $L^\infty$ bounds.
\end{proof}

\begin{lemma}\label{lemma:continuity}
Take $q_0$ satisfying \eqref{assumption on q_0}. Then
\[
c_{\rm nod}^{p,q,\a,\b}\to c_{\rm nod}^{q_0} \quad \text{ as } p,q\to q_0,\ \a,\b\to 0.
\]
Moreover, the corresponding least energy nodal solutions converge: if \linebreak $(u_{p,q,\a,\b},v_{p,q,\a,\b})$ is a sign changing solution of \eqref{eq:mainsystem} with 
\[
E_{p,q,\a,\b}(u_{p,q,\a,\b},v_{p,q,\a,\b})=c_{\rm nod}^{p,q,\a,\b},
\] 
then
\[
u_{p,q,\a,\b}\to u,\ v_{p,q,\a,\b}\to v\quad \text{ in } C^{1,\gamma}(\overline \Omega)\text{ for every $0<\gamma<1$},
\] 
where $(u,v)$ solves \eqref{eq:mainsystem} for $p=q=q_0$, $\a=\b=0$, and $E_{q_0}(u,v)=c_{\rm nod}^{q_0}$.
\end{lemma}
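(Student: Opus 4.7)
\medbreak
\noindent\textbf{Proof plan for Lemma \ref{lemma:continuity}.} The strategy is to take arbitrary sequences $p_n, q_n \to q_0$, $\alpha_n, \beta_n \to 0$ together with least energy nodal solutions $(u_n, v_n) := (u_{p_n, q_n, \alpha_n, \beta_n}, v_{p_n, q_n, \alpha_n, \beta_n})$, promote the pointwise bounds to a compactness statement in $C^{1,\gamma}(\overline\Omega)$, pass to the limit in the system, and finally bootstrap from the one-sided estimate $\limsup c_{\rm nod}^{p_n,q_n,\alpha_n,\beta_n} \leq c_{\rm nod}^{q_0}$ provided by Lemma \ref{lemma:upperbound} to an equality, by showing that the limit is a \emph{nodal} solution of the unperturbed system.

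The compactness step is standard: by the previous $L^\infty$-bound, the right-hand sides $|x|^{\beta_n}|v_n|^{q_n-1}v_n$ and $|x|^{\alpha_n}|u_n|^{p_n-1}u_n$ are uniformly bounded in $L^\infty(\Omega)$, so Calder\'on-Zygmund estimates give uniform bounds in $W^{2,r}(\Omega)$ for every $r<\infty$, and hence uniform bounds in $C^{1,\gamma}(\overline\Omega)$ for every $\gamma\in(0,1)$ via Sobolev embedding. By Arzel\`a-Ascoli and a diagonal argument, a subsequence converges in $C^{1,\gamma}(\overline\Omega)$ to some pair $(u,v)$. Passing to the limit in \eqref{eq:mainsystem} (using $|x|^{\alpha_n}\to 1$ and $|x|^{\beta_n}\to 1$ pointwise, boundedly), we obtain that $(u,v)$ is a strong (hence classical) solution of \eqref{eq:mainsystem} with $p=q=q_0$ and $\alpha=\beta=0$. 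Moreover, since $|x|^{\alpha_n}|u_n|^{p_n+1}\to |u|^{q_0+1}$ uniformly (say) and similarly for $v_n$, we get $E_{p_n,q_n,\alpha_n,\beta_n}(u_n,v_n)\to E_{q_0}(u,v)$.

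The crucial point is to ensure that the limit pair is sign-changing. Returning to the dual variables $w_{1,n}=|x|^{\alpha_n}|u_n|^{p_n-1}u_n$, $w_{2,n}=|x|^{\beta_n}|v_n|^{q_n-1}v_n$, one has $\int_\Omega |w_{i,n}^\pm|^{(p_n+1)/p_n}|x|^{-\alpha_n/p_n}\,dx=\int_\Omega |u_n^\pm|^{p_n+1}|x|^{\alpha_n}\,dx$ (and analogously for the second component). Lemma \ref{lemma:lower_bounds} therefore yields uniform lower bounds $\int_\Omega |u_n^\pm|^{p_n+1}|x|^{\alpha_n}\,dx\geq \varepsilon$ and $\int_\Omega |v_n^\pm|^{q_n+1}|x|^{\beta_n}\,dx\geq \varepsilon$. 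By $C^{1,\gamma}$-convergence these lower bounds pass to the limit, giving $\int_\Omega |u^\pm|^{q_0+1}\,dx\geq \varepsilon$ and $\int_\Omega |v^\pm|^{q_0+1}\,dx\geq \varepsilon$, so $u^\pm,v^\pm\not\equiv 0$. Consequently $E_{q_0}(u,v)\geq c_{\rm nod}^{q_0}$, and combined with Lemma \ref{lemma:upperbound} we conclude
\[
c_{\rm nod}^{q_0}\leq E_{q_0}(u,v)=\lim E_{p_n,q_n,\alpha_n,\beta_n}(u_n,v_n)=\lim c_{\rm nod}^{p_n,q_n,\alpha_n,\beta_n}\leq c_{\rm nod}^{q_0}.
\]
This proves the convergence of the levels and, with equality along the subsequence, identifies $(u,v)$ as a least energy nodal solution at $(q_0,q_0,0,0)$. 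Since the argument applies to every subsequence, the full convergence in $C^{1,\gamma}(\overline\Omega)$ follows.

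The main obstacle I anticipate is precisely the non-triviality of $u^\pm,v^\pm$ in the limit: without the $\varepsilon$-uniform lower bounds of Lemma \ref{lemma:lower_bounds}, one of the sign components could collapse and the limit might degenerate to a positive (or negative) ground state, with strictly smaller energy, breaking the argument. The translation of those dual-side bounds to the primal variables $(u,v)$ and their stability under the $C^{1,\gamma}$ passage to the limit is the key technical ingredient.
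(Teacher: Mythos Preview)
Your proposal is correct and follows essentially the same route as the paper's proof: uniform $L^\infty$-bounds $\Rightarrow$ $W^{2,r}$-bounds $\Rightarrow$ $C^{1,\gamma}$-compactness, passage to the limit in the system, then the translation of the dual lower bounds of Lemma~\ref{lemma:lower_bounds} into $\int_\Omega |u_n^\pm|^{p_n+1}|x|^{\alpha_n}\,dx\geq\varepsilon$ to preclude sign collapse, and finally the sandwich with Lemma~\ref{lemma:upperbound}. One small caveat: your closing sentence about ``full convergence in $C^{1,\gamma}$'' overreaches slightly, since least energy nodal solutions at $(q_0,q_0,0,0)$ need not be unique --- the subsequence argument yields full convergence of the \emph{levels} $c_{\rm nod}^{p,q,\alpha,\beta}$, but for the solutions themselves only convergence up to subsequences (to \emph{some} least energy nodal solution) is guaranteed, which is also what the paper establishes.
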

\begin{proof}
Take $p_n,q_n\to q_0$, $\a_n,\b_n\to 0$, and let $(u_n,v_n)$ be the corresponding least energy nodal solution of \eqref{eq:mainsystem} with $(p,q,\a,\b)=(p_n,q_n,\a_n,\b_n)$. Then $\|(u_n,v_n)\|_\infty\leq \kappa$ and, by elliptic estimates, the sequence $(u_n,v_n)$ is uniformly bounded in $W^{2,s}\times W^{2,t}$ for every $s,t>1$. Thus there exists $u,v$ such that $u_n\to u$, $v_n\to v$ in  $C^{1,\gamma}(\overline \Omega)$, and $(u,v)$ solves
\begin{equation}\label{eq:limit_prob_aux}
-\Delta u = |v|^{q_0-1}v, \qquad -\Delta v =|u|^{q_0-1}u\quad \mbox{ in } \Omega, \qquad u=v=0 \mbox{ on } \partial \Omega.
\end{equation}
Defining 
\[
w_{1n}=|x|^{\a_n}|u_n|^{p_n-1}u_n,\qquad w_{2n}=|x|^{\b_n}|v_n|^{q_n-1}v,
\]
we deduce from Lemma \ref{lemma:lower_bounds} that
\begin{align*}
&\int_\Omega |u_n^\pm|^{p_n+1} |x|^{\a_n}\, dx=\int_\Omega |w_{1n}^\pm|^\frac{p_n+1}{p_n} |x|^{-\frac{\a_n}{p_n}}\, dx\geq \eps,\\
&\int_\Omega |v_n^\pm|^{q_n+1} |x|^{\b_n}\, dx=\int_\Omega |w_{2n}^\pm|^\frac{q_n+1}{q_n} |x|^{-\frac{\b_n}{q_n}}\, dx\geq \eps,
\end{align*}
for some $\eps>0$ independent of $n$. Thus $(u,v)$ is a sign changing solution of \eqref{eq:limit_prob_aux}, and
\[
c_{\rm nod}^{q_0}\leq E_{q_0}(u,v)=\lim_n E_{p_n,q_n,\a_n,\b_n}(u_n,v_n)=\lim_n c_{\rm nod}^{p_n,q_n,\a_n,\b_n}.
\]
Combining this information with Lemma \ref{lemma:upperbound} yields the desired result.
\end{proof}

\begin{proof}[\textbf{Proof of Theorem \ref{thm:symmetrybreak_close_diag}}]
This is now an easy consequence of Lemma \ref{lemma:u=v} and Lemma \ref{lemma:continuity}. Arguing by contradiction, we would get a \emph{radial} least energy nodal solution of 
\[
-\Delta u = |u|^{q-1}u, \quad \mbox{ in } \Omega, \qquad u=0 \mbox{ on } \partial \Omega,
\]
contradicting \cite[Theorem 1.3]{AftalionPacella}.
\end{proof}

\begin{remark}
Reasoning as in this section, we can prove that the map $(p,q,\alpha,\beta)\mapsto c_{\rm nod}^{p,q,\a,\b}$ is continuous for $(p,q)$ satisfying \eqref{eq:subcritical}, $\alpha,\beta\geq 0$, and that the corresponding least energy nodal solutions converge.
\end{remark}

 \section*{Acknowledgments} 
 We would like to thank the referee for his/her valuable comments on a previous version of this manuscript, in particular for detecting a gap in the first version of the proof of Theorem 2.6.
 
 D. Bonheure is supported by INRIA - Team MEPHYSTO, 
MIS F.4508.14 (FNRS), PDR T.1110.14F (FNRS) 
\& ARC AUWB-2012-12/17-ULB1- IAPAS. 

E. Moreira dos Santos is partially supported by CNPq  projects 309291/2012-7 and 490250/2013-0 and FAPESP project 2014/03805-2. 

H. Tavares is supported by Funda\c c\~ao para a Ci\^encia e Tecnologia through the program \emph{Investigador FCT} and the project PEst-OE/EEI/LA0009/2013, and by the project ERC Advanced Grant  2013 n. 339958 ``Complex Patterns for Strongly Interacting Dynamical Systems - COMPAT''.

D. Bonheure \& E. Moreira dos Santos are partially supported by a bilateral agreement FNRS/CNPq. 
  
\medbreak


\begin{thebibliography}{10}

\bibitem{AftalionPacella}
Amandine Aftalion and Filomena Pacella.
\newblock Qualitative properties of nodal solutions of semilinear elliptic
  equations in radially symmetric domains.
\newblock {\em C. R. Math. Acad. Sci. Paris}, 339(5):339--344, 2004.

\bibitem{AlvesSoares}
Claudianor~O. Alves and S{{\'e}}rgio H.~M. Soares.
\newblock Singularly perturbed elliptic systems.
\newblock {\em Nonlinear Anal.}, 64(1):109--129, 2006.

\bibitem{BartschWethTMNA}
Thomas Bartsch and Tobias Weth.
\newblock A note on additional properties of sign changing solutions to superlinear elliptic equations.
\newblock {\em Topol. Methods Nonlinear Anal.}, 22(1), 1--14, 2003.

\bibitem{BartschWethAIHP}
Thomas Bartsch and Tobias Weth.
\newblock Three nodal solutions of singularly perturbed elliptic equations on domains without topology.
\newblock {\em Ann. Inst. H. Poincar\'e Anal. Non Lin\'eaire}, 22(3), 259--281, 2005

\bibitem{BartschWethWillem}
Thomas Bartsch, Tobias Weth, and Michel Willem.
\newblock Partial symmetry of least energy nodal solutions to some variational
  problems.
\newblock {\em J. Anal. Math.}, 96:1--18, 2005.

\bibitem{BartschWillem}
Thomas Bartsch and Michel Willem.
\newblock Infinitely many radial solutions of a semilinear elliptic problem on
  {${\bf R}^N$}.
\newblock {\em Arch. Rational Mech. Anal.}, 124(3):261--276, 1993.

\bibitem{BonHabObOm}
Denis Bonheure, Patrick Habets, Franco Obersnel, and Pierpaolo Omari
\newblock Classical and non-classical solutions of a prescribed curvature equation.
\newblock {\em J. Differential Equations}, 243(2) :208--237, 2007.


\bibitem{BonheureSantosRamosTAMS}
Denis Bonheure, Ederson Moreira~dos Santos, and Miguel Ramos.
\newblock Ground state and non-ground state solutions of some strongly coupled
  elliptic systems.
\newblock {\em Trans. Amer. Math. Soc.}, 364(1):447--491, 2012.

\bibitem{BonheureSantosRamosJFA}
Denis Bonheure, Ederson Moreira~dos Santos, and Miguel Ramos.
\newblock Symmetry and symmetry breaking for ground state solutions of some
  strongly coupled elliptic systems.
\newblock {\em J. Funct. Anal.}, 264(1):62--96, 2013.

\bibitem{BonheureSantosTavares}
Denis Bonheure, Ederson Moreira~dos Santos, and Hugo Tavares.
\newblock Hamiltonian elliptic systems: a guide to variational methods.
\newblock {\em Port. Math.}  71(3-4):301--395, 2014.

\bibitem{BonheureSchaftingen}
Denis Bonheure and Jean Van~Schaftingen.
\newblock Bound state solutions for a class of nonlinear {S}chr{\"o}dinger
  equations.
\newblock {\em Rev. Mat. Iberoam.}, 24(1):297--351, 2008.

\bibitem{Brock}
Friedemann Brock.
\newblock Symmetry and monotonicity of solutions to some variational problems
  in cylinders and annuli.
\newblock {\em Electron. J. Differential Equations}, pages No.\ 108, 20 pp.
  (electronic), 2003.

\bibitem{BuscaSirakov}
J{{\'e}}r{\^o}me Busca and Boyan Sirakov.
\newblock Symmetry results for semilinear elliptic systems in the whole space.
\newblock {\em J. Differential Equations}, 163(1):41--56, 2000.

\bibitem{CastroCossioNeuberger}
Alfonso Castro, Jorge Cossio, and John~M. Neuberger.
\newblock A sign-changing solution for a superlinear {D}irichlet problem.
\newblock {\em Rocky Mountain J. Math.}, 27(4):1041--1053, 1997.

\bibitem{ClementvanderVorst}
Ph. Cl{{\'e}}ment and R.~C. A.~M. Van~der Vorst.
\newblock On a semilinear elliptic system.
\newblock {\em Differential Integral Equations}, 8(6):1317--1329, 1995.

\bibitem{ContiTerraciniVerzini}
M.~Conti, S.~Terracini, and G.~Verzini.
\newblock Nehari's problem and competing species systems.
\newblock {\em Ann. Inst. H. Poincar{\'e} Anal. Non Lin{\'e}aire},
  19(6):871--888, 2002.

\bibitem{DamascelliPacella}
Lucio Damascelli and Filomena Pacella.
\newblock Symmetry results for cooperative elliptic systems via linearization.
\newblock {\em SIAM J. Math. Anal.}, 45(3):1003--1026, 2013.

\bibitem{DamascelliGladialiPacella2}
Lucio Damascelli, Francesca Gladiali, and Filomena Pacella.
\newblock A symmetry result for semilinear cooperative elliptic systems.
\newblock In {\em Recent trends in nonlinear partial differential equations.
  {II}. {S}tationary problems}, volume 595 of {\em Contemp. Math.}, pages
  187--204. Amer. Math. Soc., Providence, RI, 2013.

\bibitem{DamasceliGladialiPacella}
Lucio Damascelli, Filomena Pacella, and Francesca Gladiali.
\newblock Symmetry results for cooperative elliptic systems in unbounded
  domains.
\newblock {\em Indiana Univ. Math. J.}, 63(3):615--649, 2014.

\bibitem{deFigueiredo1994}
Djairo~G. de~Figueiredo.
\newblock Monotonicity and symmetry of solutions of elliptic systems in general
  domains.
\newblock {\em NoDEA Nonlinear Differential Equations Appl.}, 1(2):119--123,
  1994.

\bibitem{deFigueiredo}
Djairo~G. de~Figueiredo.
\newblock Semilinear elliptic systems: existence, multiplicity, symmetry of
  solutions.
\newblock In {\em Handbook of differential equations: stationary partial
  differential equations. {V}ol. {V}}, Handb. Differ. Equ., pages 1--48.
  Elsevier/North-Holland, Amsterdam, 2008.

\bibitem{deFigueiredoYang}
Djairo~G. De~Figueiredo and Jianfu Yang.
\newblock Decay, symmetry and existence of solutions of semilinear elliptic
  systems.
\newblock {\em Nonlinear Anal.}, 33(3):211--234, 1998.

\bibitem{deValeriolaWillem}
S{{\'e}}bastien de~Valeriola and Michel Willem.
\newblock Existence of nodal solutions for some nonlinear elliptic problems.
\newblock In {\em Nonlinear elliptic partial differential equations}, volume
  540 of {\em Contemp. Math.}, pages 231--240. Amer. Math. Soc., Providence,
  RI, 2011.

\bibitem{Farina}
Alberto Farina.
\newblock Symmetry of components, liouville-type theorems and classification
  results for some nonlinear elliptic systems.
\newblock {\em arXiv:1307.6949}, 2013.

\bibitem{GidasNiNirenberg}
B.~Gidas, Wei~Ming Ni, and L.~Nirenberg.
\newblock Symmetry and related properties via the maximum principle.
\newblock {\em Comm. Math. Phys.}, 68(3):209--243, 1979.

\bibitem{GilbargTrudinger}
David Gilbarg and Neil~S. Trudinger.
\newblock {\em Elliptic partial differential equations of second order}.
\newblock Classics in Mathematics. Springer-Verlag, Berlin, 2001.
\newblock Reprint of the 1998 edition.

\bibitem{LadyUral}
O.~Ladyzhenskaya and N.~Ural'tseva.
\newblock Local estimates for the gradients of solutions to the simplest regularization of a class of nonuniformly elliptic equations. 
\newblock (Russian) Zap. Nauchn. Sem. S.-Peterburg. Otdel. Mat. Inst. Steklov. (POMI) 213 (1994), Kraev. Zadachi Mat. Fiz. Smezh. Voprosy Teor. Funktsii. 25, 75--92, 225; translation in {\em J. Math. Sci.}  84(1), 862--872, 1997.

\bibitem{Milnor}
John~W. Milnor.
\newblock {\em Topology from the differentiable viewpoint}.
\newblock Princeton Landmarks in Mathematics. Princeton University Press,
  Princeton, NJ, 1997.
\newblock Based on notes by David W. Weaver, Revised reprint of the 1965
  original.

\bibitem{Miranda}
Carlo Miranda.
\newblock Un'osservazione su un teorema di {B}rouwer.
\newblock {\em Boll. Un. Mat. Ital. (2)}, 3:5--7, 1940.

\bibitem{PacellaWeth}
Filomena Pacella and Tobias Weth.
\newblock Symmetry of solutions to semilinear elliptic equations via {M}orse
  index.
\newblock {\em Proc. Amer. Math. Soc.}, 135(6):1753--1762 (electronic), 2007.

\bibitem{QuittnerSouplet}
Pavol Quittner and Philippe Souplet.
\newblock Symmetry of components for semilinear elliptic systems.
\newblock {\em SIAM J. Math. Anal.}, 44(4):2545--2559, 2012.

\bibitem{RamosTavaresZou}
M.~Ramos, H.~Tavares, and W.~Zou.
\newblock A {B}ahri-{L}ions theorem revisited.
\newblock {\em Adv. Math.}, 222(6):2173--2195, 2009.

\bibitem{RamosTavares}
Miguel Ramos and Hugo Tavares.
\newblock Solutions with multiple spike patterns for an elliptic system.
\newblock {\em Calc. Var. Partial Differential Equations}, 31(1):1--25, 2008.

\bibitem{Ruf}
Bernhard Ruf.
\newblock Superlinear elliptic equations and systems.
\newblock In {\em Handbook of differential equations: stationary partial
  differential equations. {V}ol. {V}}, Handb. Differ. Equ., pages 211--276.
  Elsevier/North-Holland, Amsterdam, 2008.

\bibitem{Shaker}
A.~W. Shaker.
\newblock On symmetry in elliptic systems.
\newblock {\em Appl. Anal.}, 41(1-4):1--9, 1991.

\bibitem{Simon}
Jacques Simon. 
\newblock R\'egularit\'e de la solution d'une \'equation non lin\'eaire dans $R^N$ . 
\newblock In {\em Journ\'ees d'Analyse Non Lin\'eaire (Proc. Conf., Besan\c con, 1977)}, volume 665 of {\em Lecture Notes in
Math.}, pages 205--227. Springer, Berlin, 1978.

\bibitem{Sirakov}
Boyan Sirakov.
\newblock On the existence of solutions of {H}amiltonian elliptic systems in
  {$\bold R^N$}.
\newblock {\em Adv. Differential Equations}, 5(10-12):1445--1464, 2000.

\bibitem{SmetsWillem}
Didier Smets and Michel Willem.
\newblock Partial symmetry and asymptotic behavior for some elliptic
  variational problems.
\newblock {\em Calc. Var. Partial Differential Equations}, 18(1):57--75, 2003.

\bibitem{SmetsSuWillem}
Didier Smets, Michel Willem, and Jiabao Su.
\newblock Non-radial ground states for the {H}{\'e}non equation.
\newblock {\em Commun. Contemp. Math.}, 4(3):467--480, 2002.

\bibitem{TavaresWeth}
H.~Tavares and T.~Weth.
\newblock Existence and symmetry results for competing variational systems.
\newblock {\em NoDEA Nonlinear Differential Equations Appl.}, 20(3):715--740,
  2013.

\bibitem{Temam}
R.~Temam.
\newblock Solutions g\'en\'eralis\'ees de certaines \'equations du type hypersurfaces minima.
\newblock {\em Arch. Rational Mech. Anal.}, 44: 121--156, 1971. 

\bibitem{Troy}
William~C. Troy.
\newblock Symmetry properties in systems of semilinear elliptic equations.
\newblock {\em J. Differential Equations}, 42(3):400--413, 1981.

\bibitem{WethTMNA2006}
Tobias Weth.
\newblock Nodal solutions to superlinear biharmonic equations via decomposition
  in dual cones.
\newblock {\em Topol. Methods Nonlinear Anal.}, 28(1):33--52, 2006.

\bibitem{Weth}
Tobias Weth.
\newblock Symmetry of solutions to variational problems for nonlinear elliptic
  equations via reflection methods.
\newblock {\em Jahresber. Dtsch. Math.-Ver.}, 112(3):119--158, 2010.

\bibitem{Zou}
Henghui Zou.
\newblock Symmetry of ground states for a semilinear elliptic system.
\newblock {\em Trans. Amer. Math. Soc.}, 352(3):1217--1245, 2000.

\end{thebibliography}
\end{document}